\begin{document}

\def\sect{\section}

\newtheorem{thm}{Theorem}[section]
\newtheorem{cor}[thm]{Corollary}
\newtheorem{lem}[thm]{Lemma}
\newtheorem{prop}[thm]{Proposition}
\newtheorem{propconstr}[thm]{Proposition-Construction}
\newtheorem{pro}[thm]{Proposition}

\theoremstyle{definition}
\newtheorem{para}[thm]{}
\newtheorem{ax}[thm]{Axiom}
\newtheorem{conj}[thm]{Conjecture}
\newtheorem{defn}[thm]{Definition}
\newtheorem{notation}[thm]{Notation}
\newtheorem{rem}[thm]{Remark}
\newtheorem{remark}[thm]{Remark}
\newtheorem{question}[thm]{Question}
\newtheorem{example}[thm]{Example}
\newtheorem{problem}[thm]{Problem}
\newtheorem{excercise}[thm]{Exercise}
\newtheorem{ex}[thm]{Exercise}
\newtheorem{fact}[thm]{Fact}

\newcommand{\Z}{\mathbb{Z}}
\newcommand{\Q}{\mathbb{Q}}
\newcommand{\N}{\mathbb{N}}

\def\mL{{\mathcal L}}
\def\C{{\mathcal C}}
\def\cF{{\mathcal F}}

\overfullrule=0pt

\def\si{\sigma}
\def\prf{\smallskip\noindent{\it         Proof}. }
\def\call{{\mathcal L}}
\def\nat{{\mathbb  N}}
\def\la{\langle}
\def\ra{\rangle}
\def\inv{^{-1}}
\def\ld{{\rm    ld}}
\def\trdeg{{tr.deg}}
\def\dim{{\rm   dim}}
\def\th{{\rm    Th}}
\def\rest{{\lower       .25     em      \hbox{$\vert$}}}
\def\ch{{\rm    char}}
\def\zee{{\mathbb  Z}}
\def\conc{^\frown}
\def\acl{acl_\si}
\def\cls{cl_\si}
\def\cals{{\cal S}}
\def\mult{{\rm  Mult}}
\def\calv{{\mathcal V}}
\def\aut{{\rm   Aut}}
\def\ffi{{\mathbb  F}}
\def\F{{\mathbb  F}}
\def\R{{\mathbb  R}}
\def\ffiti{\tilde{\mathbb          F}}
\def\degs{deg_\si}
\def\calx{{\mathcal X}}
\def\gal{{\mathcal G}al}
\def\cl{{\rm cl}}
\def\loc{{\rm locus}}
\def\calg{{\mathcal G}}
\def\H{{\mathcal H}}
\def\calq{{\mathcal Q}}
\def\calr{{\mathcal R}}
\def\caly{{\mathcal Y}}
\def\aff{{\mathbb A}}
\def\cali{{\cal I}}
\def\calu{{\cal U}}
\def\epsilon{\varepsilon} 
\def\U{{\mathcal U}}
\def\V{{\mathcal V}}
\def\rat{{\mathbb Q}}
\def\ga{{\mathbb G}_a}
\def\gm{{\mathbb G}_m}
\def\cee{{\mathbb C}}
\def\ree{{\mathbb R}}
\def\frob{{\rm Frob}}
\def\Frob{{\rm Frob}}
\def\fix{{\rm Fix}}
\def\Uu{{\mathcal U}}
\def\Kk{{\mathcal K}}
\def\proj{{\mathbb P}}
\def\sym{{\rm Sym}}
 
\def\dcl{{\rm dcl}}
\def\calm{{\mathcal M}}

\font\helpp=cmsy5
\def\semdp
{\hbox{$\times\kern-.23em\lower-.1em\hbox{\helpp\char'152}$}\,}

\def\dnfo{\,\raise.2em\hbox{$\,\mathrel|\kern-.9em\lower.35em\hbox{$\smile$}
$}}
\def\dnf#1{\lower1em\hbox{$\buildrel\dnfo\over{\scriptstyle #1}$}}
\def\dfo{\;\raise.2em\hbox{$\mathrel|\kern-.9em\lower.35em\hbox{$\smile$}
\kern-.7em\hbox{\char'57}$}\;}
\def\df#1{\lower1em\hbox{$\buildrel\dfo\over{\scriptstyle #1}$}}        
\def\stab{{\rm Stab}}
\def\qfcb{\hbox{qf-Cb}}
\def\perf{^{\rm perf}}
\def\sipm{\si^{\pm 1}}
\newcommand{\fgc}{\ffi_p[[G/C]]}
\newcommand{\fgone}{\ffi_p[[G/G_1]]}
\newcommand{\fgtwo}{\ffi_p[[G/G_2]]}
\newcommand{\fhc}{\ffi_p[[H/C]]}
\newcommand{\fhone}{\ffi_p[[H/H_1]]}
\newcommand{\fhtwo}{\ffi_p[[H/H_2]]}
\newcommand{\kfgc}{\ffi_p[[K\backslash G/C]]}
\newcommand{\kfgone}{\ffi_p[[K\backslash G/G_1]]}
\newcommand{\kfgtwo}{\ffi_p[[K\backslash G/G_2]]}
\newcommand{\kfhc}{\ffi_p[[K\cap H\backslash H/C]]}
\newcommand{\kfhone}{\ffi_p[[K\cap H\backslash H/H_1]]}
\newcommand{\kfhtwo}{\ffi_p[[K\cap H\backslash H/H_2]]}
\newcommand{\PZed}[1]{\textcolor{blue}{#1}}
\newcommand{\ZCed}[1]{\textcolor{green}{#1}}
\newcommand{\red}[1]{\textcolor{red}{#1}}
\newcommand{\G} {\mathcal G}
\newcommand{\cupdot}{\mathbin{\mathaccent\cdot\cup}}
\newcommand{\bigcupdot}{\hspace{9pt}\cdot \hspace{-9pt}\bigcup}
\newcommand{\PD} {\mathrm{PD}}

\def\vlabel{\label}

\title{A pro-$p$ version of Sela's accessibility  and  Poincar\'e duality pro-$p$ groups}

 \author{Ilaria Castellano and Pavel Zalesskii}

\date{}
\maketitle

\begin{abstract} We prove  a pro-$p$ version of Sela's theorem \cite{Sela} stating that a finitely generated
 group is $k$-acylindrically accessible. This result is then used to prove that $\PD^n$ pro-$p$ groups  admit a unique $k$-acylindrical JSJ-decomposition.

\end{abstract}

\section{Introduction}

Since 1970 the Bass-Serre theory of groups acting on trees stood out as one of the major advances in classical combinatorial group theory. The main notion of the Bass-Serre theory is the notion of  graph of groups. The fundamental group of a graph of groups acts naturally on a standard (universal) tree  that allows to describe subgroups of these constructions. 
This theory raised naturally the question of accessibility: namely whether we can continue to split  $G$ into an amalgamated free
product or an HNN-extension  forever, or do we reach the situation, after finitely many steps, where we can not split it anymore. In other words accessibility is the question whether splittings of $G$ as the fundamental group of a graph of groups have natural bound. Accessibility of splittings over finite groups (i.e., as a graph of groups with finite edge groups)  was studied by Dunwoody
(\cite{D85} and \cite{Dun93}) who proved that finitely presented groups are accessible but found an example of an inaccessible  finitely generated group. This initiated naturally a search for a kind of accessibility that holds for finitely generated groups. The breakthrough in this direction is due to Sela \cite{Sela} who proved $k$-acylindrical accessiblity for any finitely generated group: accessibility  provided 
the stabilizer of any segment of length $k$ of the group acting on its standard tree is trivial for some $k$.

\smallskip
The profinite version of Bass-Serre  theory was developed by Luis Ribes, Oleg Melnikov and the second author. However the pro-$p$ version of  Bass-Serre theory does not give subgroup structure theorems the way it does in the classical Bass-Serre theory: even in the pro-$p$ case, if $G$ acts on a pro-$p$ tree $T$ then a maximal subtree of the quotient graph $G\backslash T$ does not always exist and even if it exists it does not always lift to $T$. Nevertheless, the pro-$p$ version of the subgroups structure theorem works for pro-$p$ groups acting on a pro-$p$ trees that are accessible with respect to splitting over edge stabilizers; see \cite[Theorem 6.3]{CZ}. This shows additional  importance  of studying accessibility of pro-$p$ groups. In general finitely generated pro-$p$ groups are not accessible, as shown by G. Wilkes \cite{wilkes}, and it is an open question whether finitely presented are.
Our main result  in the direction is the pro-$p$ version of the celebrated Sela's result \cite{Sela} (cf. Theorem \ref{k-acylindrical accessibility}). 
\begin{thm}\label{k-acylindrical accessibility}  Let $G=\Pi_1(\G, \Gamma)$ be the fundamental group of a finite reduced $k$-acylindrical graph of pro-$p$ groups. Then $|E(\Gamma)|\leq d(G)(4k+1)-1 $, $|V(\Gamma)|\leq 4kd(G)$. 
\end{thm}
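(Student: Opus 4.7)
The plan is to adapt Sela's counting argument to the pro-$p$ Bass--Serre setting. Let $T = S(\G,\Gamma)$ be the standard pro-$p$ tree on which $G$ acts with quotient $\Gamma$. Choose a minimal pro-$p$ generating set $\{x_1,\ldots,x_d\}$ of $G$, where $d = d(G)$, and a base vertex $v_0 \in V(T)$ above a chosen vertex of $\Gamma$.

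The first step is a \emph{coverage lemma}: for each $i$, let $\gamma_i$ be the (pro-$p$) path in $T$ from $v_0$ to $x_i v_0$ and let $\bar\gamma_i$ denote its projection to $\Gamma$; then the union $\Gamma_0$ of the edges and vertices visited by $\bar\gamma_1,\ldots,\bar\gamma_d$ should equal $\Gamma$. In classical Bass--Serre theory this is immediate because a proper $\Gamma_0$ would correspond to a proper $G$-invariant subtree of $T$, contradicting that the $x_i$ generate $G$. In the pro-$p$ setting I plan to argue via the closed $G$-invariant subtree of $T$ generated by the $\gamma_i$, invoking the structural results from \cite{CZ}.

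The second step is a \emph{length bound}: each walk $\bar\gamma_i$ traverses at most $4k+1$ distinct edges and passes through at most $4k$ intermediate vertices of $\Gamma$. A long walk in $\Gamma$ must revisit edges, and using the reduced hypothesis (edge groups are properly contained in adjacent vertex groups for non-loop edges) one promotes a suitable pattern of repeated projections into a non-trivial element of $G$ that fixes a segment of $\gamma_i$ of length strictly greater than $k$; this contradicts $k$-acylindricity. A careful combinatorial accounting then pins down the constants $4k+1$ and $4k$. Summing across $i=1,\ldots,d$ and accounting for the shared base vertex $v_0$ (which yields the $-1$ in the edge count) gives $|E(\Gamma)| \le d(4k+1)-1$ and $|V(\Gamma)| \le 4k\cdot d(G)$.

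The main obstacle is the coverage step. The classical argument uses that $\Gamma$ admits a maximal subtree that lifts to $T$, but as recalled in the introduction neither the existence nor the lifting is guaranteed in the pro-$p$ setting. I would circumvent this by working directly with the closed pro-$p$ subgroup of $G$ generated by elements whose lifts are contained in a given $G$-invariant closed subtree: if the subtree corresponding to the $\gamma_i$ were proper, this subgroup would be a proper closed subgroup of $G$ containing $\{x_1,\ldots,x_d\}$, contradicting generation. The length-bound step, by contrast, is a more routine pro-$p$ translation of Sela's combinatorial argument, once the coverage is in place.
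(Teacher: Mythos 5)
Your outline transplants Sela's path-counting argument into the standard pro-$p$ tree, but both of your key steps are precisely the points where that argument breaks in the pro-$p$ category, and nothing in the proposal repairs them. For the coverage step, the geodesic $[v_0,x_iv_0]$ in a pro-$p$ tree is by definition the smallest pro-$p$ subtree containing the two vertices; it is in general not a finite edge-path, so ``the edges visited by $\bar\gamma_i$'' and ``at most $4k+1$ distinct edges'' are not well-defined finite walks, and the classical deduction that the $G$-span of the $\gamma_i$ maps onto $\Gamma$ rests on minimality and on lifting a maximal subtree of $G\backslash T$ to $T$, which (as the introduction recalls) fails pro-$p$; a vague appeal to ``structural results from \cite{CZ}'' does not bridge this. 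The length-bound step is worse: to promote a repetition pattern in the projected walk into a non-trivial element stabilizing a segment of length $>k$ is exactly where Sela's $\R$-tree analysis or Weidmann's Nielsen method enters in the abstract case, and the paper states explicitly that neither tool is available for pro-$p$ groups --- there is no reduced-word or normal-form calculus for elements of $\Pi_1(\G,\Gamma)$ that supports such combinatorial promotion. So the heart of the proof (the source of the constants $4k+1$ and $4k$) is missing, not merely ``routine translation.''

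The paper's actual route avoids translates of a base vertex entirely and works with vertex groups in the finite quotient graph. A maximal subtree $D$ has at most $d(G)$ complementary edges by \cite[Lemma~3.6]{CZ}; a Frattini-quotient argument bounds by $d(G)$ the size of a minimal set $V$ of vertices whose groups, together with the stable letters, generate $G$. The engine replacing normal forms is the chain of pro-$p$ Kurosh-type results (Lemma \ref{abstract amalgam}, Propositions \ref{general} and \ref{intersection}, Corollary \ref{amalgam}), which give Proposition \ref{distance}: in a $k$-acylindrical tree of groups, vertex groups at distance $\geq 2k+1$ generate their free pro-$p$ product. Combined with the line analysis of Proposition \ref{at most 2}, this yields Corollaries \ref{acylindrical line} and \ref{free splitting}, forcing every pair of generating vertices (including the endpoints of edges outside $D$) to lie within distance $2k$; counting then gives $|V(\Gamma)|\leq 4kd(G)$ and $|E(\Gamma)|\leq 4kd(G)-1+d(G)=d(G)(4k+1)-1$. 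If you wish to salvage your plan, you need a pro-$p$ substitute for the normal-form step, and these free-product propositions are exactly that substitute; as written, your proposal assumes the hard part.
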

We use our accessibility theorem to establish the Kropholler type  \cite[Theorem A2]{K} JSJ-decomposition   for Poincar\'e duality pro-$p$ groups. JSJ decompositions first appeared in 3-dimensional topology with the theory of the characteristic submanifold by Jaco-Shalen and Johannson. These topological ideas were carried over to group theory first by Kropholler \cite{K} for some Poincar\'e duality groups. Later constructions of JSJ decompositions were given in various
settings by Sela for torsion-free hyperbolic groups \cite{Sel97b}, and in various settings by Rips-Sela \cite{RS97}, Bowditch \cite{Bow98}, Dunwoody-Sageev \cite{DS99}, Fujiwara-Papasoglu \cite{FP06}, Dunwoody-Swenson \cite{DS00}. . . ). This has had a vast influence and range of applications in geometric and combinatorial group theory.

The result below  can be considered as the first step towards this theory in the category of pro-$p$ groups. We establish a canonical JSJ-decomposition of Poincar\'e duality pro-$p$ groups of dimension  $n$ (i.e., $PD^n$ pro-$p$ groups)  which is a pro-$p$ version of the Kropholler \cite[Theorem A2]{K}. It also can be viewed as a pro-$p$ version of the torus decomposition theorem for 3-manifolds (cf. Theorem \ref{thm:JSJsection}). 
\begin{thm}\label{thm:JSJ}
	For every $PD^n$ pro-$p$ group $G$ ($n>2$) there exists a (possibly trivial) $k$-acylindrical pro-$p$ $G$-tree $\mathcal T$ satisfying the following properties:
	\begin{enumerate}
		\item[(i)] every edge stabilizer is a   maximal polycyclic subgroup  of $G$ of Hirsch length $n-1$;
		\item[(ii)] every polycyclic subgroup  of $G$ of Hirsch length $>1$ stabilizes a vertex;
		\item[(iii)] the underline graph of groups does not split further k-acylindrically over polycyclic subgroups  of $G$ of Hirsch length $n-1$.
	\end{enumerate}
	Moreover, every two pro-$p$
	$G$-trees satisfying the properties above are $G$-isomorphic.
\end{thm}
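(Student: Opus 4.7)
The plan is to imitate Kropholler's argument for discrete $\PD^n$ groups \cite{K} inside the pro-$p$ category, using Theorem \ref{k-acylindrical accessibility} as the crucial finiteness device. The overall strategy has three parts: (a) show that any splitting of a $\PD^n$ pro-$p$ group $G$ over a maximal polycyclic subgroup of Hirsch length $n-1$ is automatically $k$-acylindrical for some $k = k(n)$ independent of the splitting; (b) build $\mathcal T$ by iterating such splittings and invoking Theorem \ref{k-acylindrical accessibility} to terminate the process; (c) establish uniqueness by pinning down the class of edge groups canonically.

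For (a), the key observation is that if $C_1, C_2$ are two distinct maximal polycyclic subgroups of $G$ of Hirsch length $n-1$, then $C_1 \cap C_2$ is polycyclic of Hirsch length strictly smaller than $n-1$; pro-$p$ Poincar\'e duality, together with the fact that the cohomological dimension of a polycyclic pro-$p$ group equals its Hirsch length, yields a bound on how long a segment of the standard pro-$p$ $G$-tree can be fixed by a nontrivial subgroup. Concretely, if $H \leq G$ fixes a segment of length $\ell$ then $H$ lies in an intersection of $\ell$ conjugates of maximal polycyclic subgroups of Hirsch length $n-1$, and a Hirsch-length descent forces $H = 1$ once $\ell \geq k(n)$. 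This uniform $k$-acylindricity is what allows the use of Theorem \ref{k-acylindrical accessibility}.

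For existence, start with any non-trivial splitting of $G$ over a maximal polycyclic subgroup of Hirsch length $n-1$ (if none exists the theorem holds with trivial $\mathcal T$), and then iterate: at each step pick a vertex group that still admits a non-trivial splitting over a maximal polycyclic subgroup of $G$ of Hirsch length $n-1$, and refine the decomposition accordingly. By step (a) the resulting graphs of pro-$p$ groups are reduced and uniformly $k$-acylindrical, so Theorem \ref{k-acylindrical accessibility} bounds $|E(\Gamma)|$ and $|V(\Gamma)|$ simultaneously. Hence the refinement process terminates after finitely many steps in a tree $\mathcal T$ which satisfies (i) and (iii) by construction. Property (ii) is then verified separately: a polycyclic subgroup $Q$ of Hirsch length $\geq 2$ must act on $\mathcal T$ with a global fixed point, since otherwise its action on its minimal invariant subtree would, via \cite[Theorem 6.3]{CZ}, exhibit $Q$ as the fundamental group of a non-trivial pro-$p$ graph of groups over subgroups of strictly smaller Hirsch length, which is impossible for a polycyclic pro-$p$ Poincar\'e duality group.

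For uniqueness, the class of edge groups appearing in any tree satisfying (i)--(iii) is canonical: it is exactly the set of conjugacy classes of maximal polycyclic subgroups of $G$ of Hirsch length $n-1$ that realise a non-trivial $k$-acylindrical splitting of $G$ which cannot be further refined. Given two such trees $\mathcal T, \mathcal T'$, property (ii) applied to every vertex stabiliser of $\mathcal T$ yields a $G$-equivariant map $\mathcal T \to \mathcal T'$ (via standard Bass--Serre collapsing in the pro-$p$ setting); minimality and symmetry force it to be an isomorphism. The main obstacle in the whole scheme is step (a): nailing down the uniform acylindricity constant $k$ from the interplay between pro-$p$ Poincar\'e duality, maximality of polycyclic edge groups, and fixed-point subgroups of segments in pro-$p$ $G$-trees, since without such a uniform constant Theorem \ref{k-acylindrical accessibility} cannot be invoked and the refinement process need not terminate.
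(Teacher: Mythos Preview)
Your plan contains a genuine gap in step (a), and this gap propagates through the rest of the argument.

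First, the parameter $k$ in the theorem is \emph{given}, not something to be derived: the statement asserts, for a fixed $k$, the existence and uniqueness of a maximal $k$-acylindrical splitting over polycyclic subgroups of Hirsch length $n-1$. The paper never attempts to show that an arbitrary splitting of $G$ over such subgroups is automatically $k$-acylindrical for some universal $k(n)$, and in fact your proposed mechanism for this fails. Your Hirsch-length descent argument asserts that if $H$ fixes a segment of length $\ell$ then $H$ lies in an intersection of $\ell$ edge stabilisers and the Hirsch length of this intersection drops at each step. It does not: even granting that two \emph{adjacent} edge stabilisers have intersection of Hirsch length $<n-1$ (which itself is nontrivial and is exactly the content of Proposition~\ref{prop:acyl}, proved via Wilkes' $\PD^n$-pair machinery and Lemma~\ref{lem:2copies}), there is no reason the Hirsch length must continue to decrease along a longer segment. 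One can have $C_1\cap C_2=C_2\cap C_3$, so the triple intersection equals the double one. Thus no uniform $k(n)$ is produced, and your step (b) cannot invoke Theorem~\ref{k-acylindrical accessibility} on that basis.

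The paper avoids this entirely: it fixes $k$, observes that polycyclic edge groups of Hirsch length $n-1$ satisfy $d(\G(e))\le n-1$, and applies Theorem~\ref{bound} to bound $|E(\Gamma)|$ over \emph{all} $k$-acylindrical splittings of this type; then it simply picks one with the maximal number of edges. Property (ii) is immediate from Lemma~\ref{lem:malnormal} (your argument via \cite[Theorem~6.3]{CZ} is unnecessary), and (iii) follows because any further $k$-acylindrical splitting of a vertex group could be inserted into the decomposition, contradicting maximality. For uniqueness, the paper does \emph{not} use property (ii) applied to vertex stabilisers as you suggest (vertex stabilisers are not polycyclic in general); instead it uses that vertex groups of $\mathcal T$ do not split further, hence must fix vertices of $\bar{\mathcal T}$, and then crucially invokes Proposition~\ref{prop:acyl} to show that adjacent vertices of $\mathcal T$ are sent to adjacent vertices of $\bar{\mathcal T}$. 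Your proposal omits the $\PD^n$-pair input (Lemma~\ref{lem:2copies}, Proposition~\ref{prop:non split}, Proposition~\ref{prop:acyl}) altogether, and without it neither the non-commensurability of adjacent edge groups nor the adjacency-preservation step in the uniqueness argument goes through.
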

Examples of JSJ-decompositions of $PD^3$ pro-$p$ groups can be obtained by the pro-$p$ completion of   abstract JSJ-decomposition of some 3-manifolds (see \cite{wilkes17}). The pro-$p$ completion of $PD^n$ groups in general were studied in  \cite{KZ,W, HK,HKL, K}. 

\medskip
The structure of the paper is as follows. Section 2 recalls the notions of a pro-$p$ tree, a pro-$p$ fundamental group and a graph of pro-$p$ groups with a special focus on finite graphs of pro-$p$ groups. Throughout the paper finite graphs of pro-$p$ groups will be often required to be reduced and proper (see Definitions \ref{proper} and \ref{reduced}) but Remarks \ref{remark proper} and \ref{reduced-2} show that such an assumption is not restrictive.  Section 3 is devoted to the proof of the pro-$p$ version of Sela's accessibility which  states that every finitely generated pro-$p$ group is $k$-acylindrically accessible. Recall that a profinite graph of pro-$p$ groups $(\G,\Gamma)$ is {\em $k$-acylindrical} if the action of the pro-$p$ fundamental group  on its standard pro-$p$ tree  is $k$-acylindrical (cf. Subsection \ref{pro-p tree}). In this section we also prove the pro-$p$ version of Karras-Solitar result describing 2-generated subgroups of free products with malnormal amalgamation (see Theorem~\ref{thm:KS}). 
Finally Section 4 deals with  splittings of $PD^n$ pro-$p$ groups and culminates with a JSJ-decomposition for $PD^n$ pro-$p$ groups (see Theorem \ref{thm:JSJ})  which is a pro-$p$ version of the Kropholler \cite[Theorem A2]{K}. Note that the Kropholler theorem  \cite[Theorem A2]{K} gives also information  on vertex groups of a JSJ-splitting that is based on the Kropholler-Roller decomposition theorem \cite[Theorem B]{KR} that states that a $PD^n$ group $G$ having a $PD^{n-1}$ subgroup $H$ virtually splits  as a free product with amalgamation or HNN-extension over a subgroup commensurable with it if $cd(H\cap H^g)\neq  n-2$ for each $g\in G$. In fact, by   \cite[Theorem C]{KR},  $G$ virtually splits over $H$ if $H$ is polycyclic.

Unfortunately Kropholler-Roller theorems do not hold in the prop-$p$ case as shown by the following  example, which has been constructed in communication with Peter Kropholler during the visit of the second author to the University of Southampton.
\begin{example} Let $G$ be an open pro-$p$ subgroup of $SL_2(\Z_p)$ and $H$ is the intersection of the Borel subgroup of $SL_2(\Z_p)$ with $G$. Then $H$ is malnormal metacyclic subgroup of $G$ and therefore is $PD^2$ pro-$p$ group. The group $G$ is an analytitic pro-$p$ group of dimension 3 and so is a $PD^3$ pro-$p$ group. However, $G$ does not split as an amalgamated free  pro-$p$ product or HNN-extension at all.
\end{example}
In Section 5 we provide the details of the statement written in the example above. Here we just remark that the absence of the Kroholler-Roller splitting result is an obstacle of obtaining  information on vertex groups of a JSJ-splitting from Theorem \ref{thm:JSJ}.

\section{Notation, definitions and basic results}\vlabel{preliminaries}
\para{\bf Notation.} We shall denote by $d(G)$ the number of a minimal set of generators of a pro-$p$ group $G$ and by $\Phi(G)$ its Frattini subgroup.  If a pro-$p$ group $G$ continuously acts on a profinite space $X$ we denote by $G_x$ the stabilizer of $x$ in $G$.    
If $x\in X$ and $g\in G$, then $G_{gx}=gG_xg\inv$. We shall use the
notation  $h^g=g\inv hg$ for conjugation.  For  a subgroup $H$ of $G$,  $H^G$  will stand for the (topological) normal closure 
 of $H$ in $G$. If $G$ is an abstract group $\widehat G$ will mean the pro-$p$ completion of $G$. 
  
\para{\bf Conventions.} Throughout the paper, unless otherwise stated, groups are pro-$p$,
subgroups will be closed and 
morphisms will be continuous. Finite graphs of groups will be proper and
reduced (see Definitions \ref{proper} and \ref{reduced}). Actions of
a pro-$p$ group $G$  on a profinite graph $\Gamma$ will a priori be supposed to be faithful
(i.e., the action has no kernel), unless we consider actions on
subgraphs of $\Gamma$.  

 \bigskip
 Next  we collect basic definitions, following \cite{horizons}.

\subsection{Profinite graphs}
\begin{defn} A
{\em profinite graph} is a triple $(\Gamma, d_0, d_1)$, where
$\Gamma$ is a profinite (i.e. boolean) space and $d_0,d_1:\Gamma \to \Gamma $ are
continuous maps such that $d_id_j=d_j$ for $i, j \in \{0, 1 \}$.
The elements of $V(\Gamma):=d_0(G)\cup d_1(G)$ are called the
{\em vertices} of $\Gamma$ and the elements of
$E(\Gamma):=\Gamma\setminus V(\Gamma)$ are called the {\em edges} of 
$\Gamma$. If $e\in E(\Gamma)$, then $d_0(e)$ and $d_1(e)$ are
called the {\it initial} and {\it terminal vertices} of $e$. A vertex with only one incident edge is called {\it pending}. If there is no
confusion, one can just write $\Gamma$ instead of $(\Gamma, d_0,
d_1)$. \end{defn} 
\begin{defn}
A {\em morphism} $f:\Gamma \to \Delta$  of graphs is a map $f$ which
commutes with the $d_i$'s. Thus it will send vertices to vertices, but
might send an edge to a vertex.\footnote{It is called a {\em
    quasimorphism} in \cite{R 2017}.} 
\end{defn}
\begin{para} \label{collapse of edges}{\bf Collapsing edges.} We do not require for a morphism to send edges to edges. If $\Gamma$ is a graph and $e$ an edge which is not a loop we can {\em collapse} the edge $e$ by removing $\{e\}$ from the edge set of $\Gamma$, and
identify $d_0(e)$ and $d_1(e)$ with a new vertex $y$. I.e., $\Gamma'$ is
the graph given by $V(\Gamma')=V(\Gamma)\setminus \{d_0(e),d_1(e)\}\cup
\{y\}$ (where $y$ is the new vertex), and $E(\Gamma')=E(\Gamma)\setminus
\{e\}$. We define $\pi:\Gamma\to \Gamma'$ by setting $\pi(m)=m$ if
$m\notin \{e,d_0(e),d_1(e)\}$, $\pi(e)=\pi(d_0(e))=\pi(d_1(e))=y$. The
maps $d'_i:\Gamma'\to \Gamma'$ are defined so that $\pi$ is a morphism
of graphs. Another way of describing $\Gamma'$ is that
$\Gamma'=\Gamma/\Delta$, where $\Delta$ is the subgraph
$\{e,d_0(e),d_1(e)\}$ collapsed into the vertex $y$.
\end{para}
\begin{defn} Every profinite graph $\Gamma$ can be represented as an inverse limit $\Gamma=\varprojlim \Gamma_i$ of its finite quotient graphs (\cite[Proposition 1.5]{horizons}).

A profinite graph $\Gamma$ is said to be {\em connected} if all
its finite quotient graphs are connected. Every profinite graph is
an abstract graph, but a connected profinite graph is
not necessarily connected as an abstract graph. \end{defn}

A connected finite graph without circuits is called a {\it tree}. In the next subsection we shall explain how this notion extends to the pro-$p$ context. At the moment we shall prove several easy lemmas on finite graphs needed in the paper. The {\em valency} of a vertex is the number of edges connected to it. Hence, a vertex is pending if it has valency 1. A tree with two pending vertices will be called a {\em line}.

\subsection{Pro-$p$ trees}\label{pro-p tree}
\begin{para}{\bf The fundamental group of a profinite graph.}\label{fundamental}
	Let $\Gamma$ be  connected profinite graph. If $\Gamma=\varprojlim
	\Gamma_i$ is the inverse limit of the finite graphs $\Gamma_i$, then it induces the inverse
	system  $\{\pi_1(\Gamma_i)=\widehat \pi_1^{abs}(\Gamma_i)\}$ of the pro-$p$ completions of the abstract (usual) fundamental groups $\pi_1^{abs}(\Gamma_i)$. So 
	the pro-$p$ fundamental group $\pi_1(\Gamma)$ can be defined as $\pi_1(\Gamma)=\varprojlim_i
	\pi_1(\Gamma_i)$. If $\pi_1(\Gamma)=1$ then $\Gamma$ is called a {\em pro-$p$ tree}. 
\end{para}
 If $T$ is a pro-$p$ tree, then we say
  that a pro-$p$ group $G$ {\em acts on $T$} if it acts continuously on
  $T$ and the action commutes with $d_0$ and $d_1$. 

  If $t\in V(T)\cup E(T)$ we denote by $G_t$ the stabilizer of $t$ in $G$. For a pro-$p$ group $G$ acting on a pro-$p$ tree $T$ we let $\tilde{G}$
denote the subgroup generated by all vertex stabilizers. Moreover, for
any two vertices $v$ and $w$ of $T$ we let $[v,w]$ denote the geodesic connecting $v$ to $w$ in $T$, i.e., the (unique) smallest pro-$p$ subtree of $T$ that contains $v$ and $w$. The fundamental group $\pi_1(\Gamma)$ acts freely on a pro-$p$ tree $\widetilde \Gamma$ (universal cover) such that $\pi_1(\Gamma)\backslash \tilde\Gamma=\Gamma$ (see  \cite[Sedtion 3]{Z-89} or \cite[Chapter 3]{R 2017} for details).

An action of a pro-$p$ group on a pro-$p$  tree $T$ is called \emph{$k$-acylindrical} if the stabiliser of any geodesic in $T$ of length greater than $k$ is trivial. For instance, $0$-acylindrical refers to an action with trivial edge stabilisers, and $1$-acylindrical implies that edge stabilisers are malnormal in vertex-groups.
\begin{lem}\label{lem:malnormal}
Let $G$ be a pro-$p$ group acting $k$-acylindrically on a pro-$p$ tree $T$.  Then every polycyclic  subgroup $A$ of $G$ of Hirsch length $>1$  fixes a vertex. 
\end{lem}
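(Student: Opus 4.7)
The plan is to argue by contradiction: assume that $A$ is a polycyclic subgroup of $G$ of Hirsch length $>1$ which does not fix any vertex of $T$, and derive that $A$ must be pro-cyclic, contradicting the hypothesis on its Hirsch length.

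First I would apply a pro-$p$ analogue of the classical tree-action dichotomy: a pro-$p$ group acting on a pro-$p$ tree without a global fixed vertex either stabilises a (unique) end of the tree or contains a non-abelian free pro-$p$ subgroup (such results are available in the pro-$p$ Bass--Serre theory of \cite{R 2017}). Since $A$ is polycyclic and therefore solvable, it contains no non-abelian free pro-$p$ subgroup, so $A$ must stabilise some end $\xi$ of $T$. To set this up cleanly it may be helpful to first pass to a finite-index characteristic uniform pro-$p$ subgroup $U$ of $A$, which exists because polycyclic pro-$p$ groups are $p$-adic analytic; one then uses the standard fact that finite $p$-groups acting on a pro-$p$ tree always admit a fixed vertex (so that the $U$-fixed subtree is non-empty and $A/U$-invariant) to reduce the question to $U$.

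Next I would consider the translation-length homomorphism $\ell : A \to \Z_p$ associated to the fixed end $\xi$, where $\ell(a)$ records how far $a$ translates along any ray converging to $\xi$. Any $a \in \ker\ell$ fixes $\xi$ with zero translation, and hence acts as the identity on a sub-ray of every ray going to $\xi$; in particular such an $a$ fixes a geodesic of length greater than $k$, so by $k$-acylindricity $a = 1$. Therefore $\ell$ is injective, $A$ embeds into $\Z_p$, and thus $A$ is pro-cyclic of Hirsch length at most $1$, contradicting the assumption that it has Hirsch length $>1$.

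The main obstacle is securing the pro-$p$ tree-theoretic inputs. Both the end-stabilising dichotomy and the construction of a well-behaved translation-length homomorphism are more delicate in the pro-$p$ setting than in the classical discrete case, as minimal invariant subtrees and axes of hyperbolic-like elements may fail to exist in the naive sense; one must work with pro-$p$ approximations and take care with inverse limits. These inputs are, however, available in \cite{R 2017} and in earlier work of the second author and collaborators, and so should be applicable here without further development.
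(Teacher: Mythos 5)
There is a genuine gap, and it sits exactly where you flagged it and then waved it away: the inputs you need --- a theory of ends of pro-$p$ trees, the dichotomy ``fix an end or contain a non-abelian free pro-$p$ subgroup'', and a translation-length homomorphism $\ell\colon A\to\Z_p$ toward a fixed end --- are not available in \cite{R 2017} or elsewhere in the pro-$p$ literature, and they are problematic in principle, not just delicate. A pro-$p$ tree is a compact profinite object which need not be connected as an abstract graph; there are in general no rays, so ``a sub-ray of every ray converging to $\xi$'' has no meaning, and the geodesic $[v,w]$ is defined only as the smallest pro-$p$ subtree containing $v$ and $w$, not as an edge-path one can exhaust by rays. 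Consequently the step ``any $a\in\ker\ell$ acts as the identity on a sub-ray, hence fixes a geodesic of length $>k$'' has no pro-$p$ counterpart to appeal to; constructing the end/translation-length formalism would itself be the hard content of the proof, and it is not a routine inverse-limit argument. (The detour through a uniform open subgroup $U$ and fixed vertices of finite $p$-groups also does no work as stated: it is not explained how a fixed vertex for $U$, or the $A/U$-invariance of $T^U$, feeds back into the end argument.)

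The paper avoids all of this by using the machinery that \emph{does} exist: the structure theorem for soluble pro-$p$ groups acting on pro-$p$ trees (\cite[Theorem 3.18]{horizons}) produces a normal subgroup $N$ of $A$ stabilizing a vertex; by \cite[Theorem 3.7]{horizons} such a normal subgroup fixes the minimal $A$-invariant subtree $T_A$ pointwise; $k$-acylindricity then forces $T_A$ to have diameter at most $k$, and a pro-$p$ group acting on a pro-$p$ tree of bounded diameter fixes a vertex, contradicting the assumption. In effect, the soluble-group structure theorem is the correct pro-$p$ substitute for the classical ``elliptic-or-end-stabilizing'' dichotomy you are trying to import; your argument is essentially the discrete-group proof and does not transfer without first proving that substitute. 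If you want to salvage your outline, replace the end/translation-length step by the citation of \cite[Theorems 3.18 and 3.7]{horizons} and run the acylindricity argument on the minimal $A$-invariant subtree rather than on rays.
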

\begin{proof}
	Let   $A\leq G$ be an polycyclic group of Hirsch length  $>1$. By contradiction assume that $A$ does not fix any vertex of $T$. By  \cite[Theorem 3.18]{horizons}) there exists a normal subgroup $N$ of $A$ stabilizing some vertex $v\in V(T)$. Since $A\neq A_v$,    the minimal subtree $T_A$ containing $Av$ is   fixed by $N$ (see \cite[Theorem 3.7]{horizons}). Since $T$ is $k$-acylindrical,  $T_A$ has diameter at most $k$, so   $A$ stabilizes a vertex. 
\end{proof}

\subsection{Finite graphs of pro-$p$ groups} 
In this subsection we recall the definition of a finite graph of pro-$p$ groups $(\G,\Gamma)$ and its fundamental pro-$p$ group $\Pi_1(\G, \Gamma)$. 
When we say that ${\cal G}$ is a finite graph of pro-$p$ groups we mean that it contains the data of the
underlying finite graph, the edge pro-$p$ groups, the vertex pro-$p$
groups and the attaching continuous maps. More precisely,
\begin{defn}
Let $\Gamma$ be a connected finite graph. A    graph of pro-$p$ groups $({\cal G},\Gamma)$ over
$\Gamma$ consists of  specifying a pro-$p$ group ${\cal G}(m)$ for each $m\in \Gamma$ (i.e. $\G= \bigcupdot_{m\in\Gamma} \G(m)$), and continuous monomorphisms
$\partial_i: {\cal G}(e)\longrightarrow {\cal G}(d_i(e))$ for each edge
$e\in E(\Gamma)$, $i=1,2$. 
\end{defn}
\begin{defn}
  \begin{enumerate}
\item A {\em morphism} of graphs of pro-$p$ groups:
$(\G,\Gamma) \rightarrow (\H,\Delta)$ is a pair 
$(\alpha,\bar\alpha)$  of maps, with  
 $\alpha:\G\longrightarrow\H$ a continuous map, and  $\bar\alpha:\Gamma\longrightarrow
\Delta$ a morphism of graphs, and such that $\alpha_{\G(m)}:\G(m)\longrightarrow
\H(\bar\alpha(m))$ is a homomorphism for each $m\in \Gamma$ and  which commutes with the appropriate $\partial _i$. Thus the diagram
$$\xymatrix{
\G\ar@{->}^\alpha[rr]\ar@{->}^{\partial_i}[d] & &\H\ar@{->}^{\partial_i}[d]\\
\G\ar@{->}^{\alpha}[rr] & &\H }$$ is commutative.
\item We say that $(\alpha, \bar\alpha)$ is a {\em monomorphism} if both $\alpha,\bar\alpha$ are injective. In this case its image will be called a {\em subgraph of groups} of $(\H, \Delta)$. In other words, a  subgraph of groups of  a graph of pro-$p$-groups
  $(\G,\Gamma)$ is a graph of groups $(\H,\Delta)$, where $\Delta$ is a
subgraph of $\Gamma$ (i.e., $E(\Delta)\subseteq E(\Gamma)$ and
$V(\Delta)\subseteq V(\Gamma)$, the maps $d_i$ on $\Delta$ are the
restrictions of the maps $d_i$ on $\Gamma$), and for each $m\in\Delta,$
$\H(m)\leq \G(m)$.
\end{enumerate}
\end{defn}
\begin{para}\label{fund graph groups} {\bf Definition of the fundamental pro-$p$ group.}  
In \cite[paragraph (3.3)]{Z-M 89b},  the fundamental group
 $G$ is  defined explicitly in terms of generators and relations
 associated to a chosen subtree $D$. Namely 
 \begin{equation} \label{presentation} G=\langle
 \G(v), t_e\mid v\in V(\Gamma), e\in E(\Gamma), t_e=1 \ {\rm for}\  e\in D, \partial_0(g)=t_e\partial_1(g)t_e^{-1},\  {\rm for}\ g\in \G(e)\rangle
\end{equation}
I.e., if one takes the abstract fundamental group $G_0=\pi_1(\G,\Gamma)$,
then $\Pi_1(\G,\Gamma)=\varprojlim_N G_0/N$, where $N$ ranges over
all normal subgroups of $G_0$ of index a power of $p$ and with $N\cap
\G(v)$ open in $\G(v)$ for all $v\in V(\Gamma)$. Note that this last
condition is automatic if $\G(v)$ is finitely generated (as a
pro-$p$-group) by \cite[\S 48]{RZ-10}. 
   It is also proved in \cite{Z-M 89b} 
that the definition given above is independent on the choice of
the maximal subtree $D$.\end{para}

The main examples of $\Pi_1(\G,\Gamma)$ are an amalgamated free pro-$p$
product $G_1\amalg_H G_2$ and an HNN-extension ${\rm HNN}(G,H,t)$ that correspond to the cases of $\Gamma$ having one edge and either two vertices or only one vertex, respectively. 
\begin{defn} \label{proper}We call the graph of groups $(\G,\Gamma)$ {\em proper}
  (injective in the terminology of \cite{R 2017}) if the natural map 
  $\G(v)\to \Pi_1(\G,\Gamma)$ is an embedding for all $v\in V(\Gamma)$.
  \end{defn}
\begin{rem}\label{remark proper} In the pro-$p$ case, a graph of groups $(\G,\Gamma)$ is not always {proper}. However, the
vertex and edge groups can always be replaced by their images  in
$\Pi_1(\G, \Gamma)$ so that $(\G,\Gamma)$ becomes proper and  $\Pi_1(\G,
\Gamma)$ does not change. Thus throughout the paper we shall only
consider  proper graphs of pro-$p$ groups. In particular, all our free amalgamated pro-$p$ products are proper. Thus we shall always identify vertex and edge groups of $(\G,\Gamma)$ with their images in $\Pi_1(\G,\Gamma)$. 
 \end{rem}
If $(\G,\Gamma)$ is a finite graph of
finitely generated pro-$p$ groups, then by a theorem of J-P.~Serre (stating that every finite index subgroup of a finitely generated pro-$p$ group is open, cf. \cite[\S 4.8]{RZ-10}) the  fundamental
pro-$p$ group $G=\Pi_1(\G,\Gamma)$ of
$(\G,\Gamma)$ is the pro-$p$ completion of the usual
fundamental group $\pi_1(\G,\Gamma)$ 
(cf. \cite[\S5.1]{Serre-1980}). Note that $(\G,\Gamma)$ is proper if and only if  $\pi_1(\G,\Gamma)$  is  residually $p$. In particular, edge and vertex groups will be
subgroups of $\Pi_1(\G,\Gamma)$. 
\begin{prop}\label{mod tilde U} Let $G=\Pi_1(\G,\Gamma)$ be the fundamental  pro-$p$  group of a finite proper graph of  pro-$p$ groups and $U$ a normal subgroup of $G$. Put $\tilde U=\langle \G(v)^g\cap U\mid g\in G,  v\in V(\Gamma)\rangle$. Then $\tilde U$ is normal in $G$ and $G/\tilde U=\Pi_1(\G_U, \Gamma)$, where $\G_U(m)=\G(m)U/U$ for each $m\in\Gamma$ with $\partial_0, \partial_1$  being natural inclusions in $G/U$.
\end{prop}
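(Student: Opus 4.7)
The normality of $\tilde U$ is immediate from its definition: for any $h,g\in G$ and $v\in V(\Gamma)$ one has $(\G(v)^g\cap U)^h=\G(v)^{gh}\cap U^h=\G(v)^{gh}\cap U$ (using $U\trianglelefteq G$), so the generating set of $\tilde U$ is $G$-invariant. For the isomorphism $G/\tilde U\cong \Pi_1(\G_U,\Gamma)$, the plan is to build mutually inverse continuous homomorphisms between the two groups using the universal property of the pro-$p$ fundamental group supplied by the presentation (\ref{presentation}).

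In one direction, define $\phi\colon G\to \Pi_1(\G_U,\Gamma)$ by the natural projection $\G(v)\to \G_U(v)=\G(v)U/U$, $g\mapsto gU$, on each vertex group, and by sending each stable letter $t_e$ to the corresponding stable letter of $\Pi_1(\G_U,\Gamma)$. The edge relation $\partial_0(g)=t_e\partial_1(g)t_e^{-1}$ of $G$ reduces modulo $U$ to the edge relation of $\Pi_1(\G_U,\Gamma)$, and the normalization $t_e=1$ for $e\in D$ is preserved; by the universal property, $\phi$ is thus a well-defined continuous homomorphism. Since $\G(v)\cap U$ maps to the identity of $\G_U(v)$ and $\ker\phi$ is normal in $G$, every $\G(v)^g\cap U$ lies in $\ker\phi$, hence $\tilde U\subseteq \ker\phi$ and $\phi$ factors through a continuous surjection $\bar\phi\colon G/\tilde U\to \Pi_1(\G_U,\Gamma)$.

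In the reverse direction, the presentation of $\Pi_1(\G_U,\Gamma)$ lets me define $\psi\colon \Pi_1(\G_U,\Gamma)\to G/\tilde U$ by sending $gU\in \G_U(v)$ (where $g\in \G(v)$) to $g\tilde U$ and each stable letter of $\Pi_1(\G_U,\Gamma)$ to $t_e\tilde U$. The assignment $g\mapsto g\tilde U$ from $\G(v)$ kills $\G(v)\cap U\subseteq \tilde U$, so it descends to a continuous homomorphism on $\G(v)/(\G(v)\cap U)\cong \G_U(v)$. The edge relations of $\Pi_1(\G_U,\Gamma)$ are automatically satisfied in $G/\tilde U$ because they already hold in $G$. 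Hence $\psi$ is a well-defined continuous homomorphism, and $\bar\phi$ and $\psi$ are mutually inverse since each is the identity on the chosen generators.

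The only delicate point in the argument is the well-definedness of $\psi$ on the vertex groups $\G_U(v)$; this is precisely what forces the specific definition of $\tilde U$ as the closed normal subgroup generated by all $\G(v)^g\cap U$ rather than $U$ itself. Everything else is formal from the universal property of $\Pi_1(\G,\Gamma)$ in the pro-$p$ category, together with the observation that $\G(v)U$ is closed in $G$ (as the preimage of the compact image of $\G(v)$ under the continuous quotient map $G\to G/U$), so that $\G_U(v)$ is a genuine closed pro-$p$ subgroup of $G/U$ and the new graph of pro-$p$ groups $(\G_U,\Gamma)$ is proper.
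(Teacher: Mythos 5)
Your proof is correct and is essentially the paper's own argument written out in full: the paper simply observes that $\Pi_1(\G_U,\Gamma)$ has the presentation (\ref{presentation}) with each $\G(v)$ replaced by $\G_U(v)=\G(v)U/U$, so the kernel of the induced epimorphism $\Pi_1(\G,\Gamma)\to\Pi_1(\G_U,\Gamma)$ is the normal closure of the subgroups $\G(v)\cap U$, $v\in V(\Gamma)$, which is exactly $\tilde U$ --- precisely what your mutually inverse maps $\bar\phi$ and $\psi$ verify via the universal property. Your closing aside that $(\G_U,\Gamma)$ is proper is not needed for the statement, and closedness of $\G_U(v)$ in $G/U$ does not by itself give properness; that rather follows a posteriori from the equality $\G(v)\cap\tilde U=\G(v)\cap U$, which holds since $\tilde U\subseteq U$.
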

 \begin{proof}  The fundamental group $\Pi_1(\G_U, \Gamma)$ has a presentation \begin{equation} \label{presentation mod U} \langle
 \G_U(v), t_e\mid v\in V(\Gamma), e\in E(\Gamma), t_e=1 \ {\rm for}\  e\in D, \partial_0(g)=t_e\partial_1(g)t_e^{-1},\  {\rm for}\ g\in \G_U(e)\rangle
\end{equation}
 Therefore the kernel of the epimorphism $\Pi_1(\G,\Gamma)\longrightarrow \Pi_1(\G_U,\Gamma)$ induced by the natural morphism $(\G,\Gamma)\longrightarrow (\G_U,\Gamma)$ is generated as a normal subgroup by  $\G(v)\cap U, v\in V(\Gamma)$ as needed.
 \end{proof}
Let $(\G, \Gamma)$ be a profinite graph of pro-$p$ groups and $\Delta$ a subgraph of $\Gamma$. Then by $(\G, \Delta)$ we shall denote the graph of groups restricted to $\Delta$. We shall often use the following 
\begin{lem}\label{restricted graph} (\cite[Lemma 2.4]{SZ}) Let $(\G, \Gamma)$ be a proper finite graph   of pro-$p$ groups and $\Delta$ a connected subgraph of $\Gamma$. Then the natural homomorpism $\Pi_1(\G, \Delta)\longrightarrow \Pi_1(\G, \Gamma)$ is a monomorphism.
\end{lem}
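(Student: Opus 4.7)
The plan is to realize $\Pi_1(\G, \Delta)$ as the setwise stabilizer of a suitable pro-$p$ subtree of the standard pro-$p$ tree of $G := \Pi_1(\G, \Gamma)$. Concretely, let $T = \widetilde{\Gamma}$ be the standard pro-$p$ tree on which $G$ acts with quotient $G\backslash T \cong \Gamma$, and let $\pi : T \to \Gamma$ be the quotient morphism. The preimage $\pi^{-1}(\Delta)$ is a profinite subgraph of $T$, and since $T$ is a pro-$p$ tree its connected components are simply connected, hence themselves pro-$p$ subtrees; they are permuted transitively by $G$. I would fix one component $S$ and consider its setwise stabilizer $G_S = \{g \in G : gS = S\}$.

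By the pro-$p$ Bass--Serre formalism of \cite[Chapter 3]{R 2017} (cf.\ also \cite[Section 3]{Z-89}), $G_S$ acts on $S$ with quotient isomorphic to $\Delta$, and the vertex and edge stabilizers of this action are exactly the vertex and edge groups $\G(v)$, $\G(e)$ of the restricted graph of groups $(\G, \Delta)$, after conjugating by the elements used to select $S$. Hence $G_S$ is canonically identified with the fundamental pro-$p$ group of the graph of pro-$p$ groups induced by $(\G,\Gamma)$ on $\Delta$, i.e.\ with $\Pi_1(\G, \Delta)$. Under this identification the natural homomorphism $\Pi_1(\G, \Delta) \to G$ becomes the inclusion $G_S \hookrightarrow G$, and we are done.

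The main obstacle is verifying in the pro-$p$ setting that the components of $\pi^{-1}(\Delta)$ behave as in the classical case — each a pro-$p$ subtree, surjecting onto $\Delta$, with the setwise stabilizer acting through exactly the original vertex and edge groups — since connected components of profinite graphs are more delicate than in the abstract category and one must argue with inverse systems of finite quotients. An equivalent route, sidestepping the general tree discussion, is to argue by induction on $|E(\Gamma)\setminus E(\Delta)| + |V(\Gamma)\setminus V(\Delta)|$: choose an intermediate connected subgraph $\Delta \subsetneq \Delta' \subseteq \Gamma$ differing from $\Delta$ by one edge $e$ and (at most) one vertex. Using presentation (\ref{presentation}) with a maximal subtree of $\Delta$ extended compatibly to $\Delta'$, the group $\Pi_1(\G, \Delta')$ becomes either a pro-$p$ HNN extension $\mathrm{HNN}(\Pi_1(\G, \Delta), \G(e), t_e)$ or a pro-$p$ amalgamated free product $\Pi_1(\G, \Delta) \amalg_{\G(e)} \G(v)$; properness of $(\G, \Gamma)$, transported through the inductive inclusion $\Pi_1(\G, \Delta') \hookrightarrow \Pi_1(\G, \Gamma)$, guarantees that $(\G, \Delta')$ is itself a proper graph of pro-$p$ groups, and hence (via the pro-$p$ tree action of $\Pi_1(\G, \Delta')$) that $\Pi_1(\G, \Delta)$ embeds into $\Pi_1(\G, \Delta')$, completing the induction.
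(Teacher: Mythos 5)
The paper gives no argument for this lemma at all --- it is quoted from \cite[Lemma 2.4]{SZ} --- so the only question is whether your proposal stands on its own, and as written it does not: both of your routes defer or simply assert the one point that carries all the content. In your inductive route the reduction to a single added edge is fine (comparing the presentations \eqref{presentation} does give $\Pi_1(\G,\Delta')$ as a pushout $\Pi_1(\G,\Delta)\amalg_{\G(e)}\G(v)$ or $HNN(\Pi_1(\G,\Delta),\G(e),t_e)$, and properness of $(\G,\Delta')$ follows by composing with the inclusion into $\Pi_1(\G,\Gamma)$), but the final claim --- that properness of $(\G,\Delta')$ ``hence'' gives $\Pi_1(\G,\Delta)\hookrightarrow\Pi_1(\G,\Delta')$ --- is exactly the statement of the lemma in its essential case, the case at which your induction bottoms out. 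Properness means only that the \emph{vertex} groups $\G(w)$ embed in $\Pi_1(\G,\Delta')$; in the pro-$p$ category this does not formally imply that the fundamental group of a subgraph embeds, because free pro-$p$ products with amalgamation and pro-$p$ HNN-extensions need not be proper: the collapsed vertex group $\Pi_1(\G,\Delta)$ could a priori be replaced by a proper quotient inside $\Pi_1(\G,\Delta')$. Indeed, the paper's Remark \ref{collapsed graph of groups} invokes precisely Lemma \ref{restricted graph} to justify collapsing a subgraph to a vertex, so your argument at this step is circular.

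Your first route has the same problem in different clothes. That the components of $\pi^{-1}(\Delta)$ in the standard tree are pro-$p$ subtrees permuted transitively by $G$, that the setwise stabilizer $G_S$ acts with quotient $\Delta$ and with stabilizers exactly the conjugates of the $\G(m)$, $m\in\Delta$, and that $G_S$ is then ``canonically identified'' with $\Pi_1(\G,\Delta)$ compatibly with the natural homomorphism (rather than merely receiving a possibly non-injective epimorphism from it): these are the whole proof, not preliminaries, and you explicitly flag them as an obstacle rather than carry them out. There are no normal forms or covering-space arguments available here; making this work requires the structure theory of pro-$p$ groups acting on pro-$p$ trees with finite quotient graph (\cite{Z-M 89b}, \cite[Chapter 6]{R 2017}) together with a careful comparison of the resulting graph of groups with $(\G,\Delta)$, which is essentially what the cited proof in \cite{SZ} supplies. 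So the strategy is reasonable, but as it stands the proposal restates the lemma at its crucial step instead of proving it.
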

\begin{prop}\label{free product} Let $G=\Pi_1(\G,\Gamma)$ be the fundamental group of a proper finite graph of pro-$p$ groups.  Suppose there exists an edge $e$ such that $G(e)=1$ and $G(d_i(e))\neq 1$ for $i=0,1$. Then $G$ splits as a free pro-$p$ product.
\end{prop}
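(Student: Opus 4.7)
The plan is to exploit the explicit pro-$p$ presentation \eqref{presentation} of $G=\Pi_1(\G,\Gamma)$ by choosing the maximal subtree $D\subseteq\Gamma$ so that the distinguished edge $e$ (with $\G(e)=1$) either plays the role of a free stable letter or collapses to $1$ in a way that uncouples the presentation into independent pieces. I would divide into three cases according to the position of $e$ in $\Gamma$.

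\emph{Case 1 ($e$ is a loop).} Let $\Gamma':=\Gamma\setminus\{e\}$; this is still connected. Any maximal subtree $D$ of $\Gamma'$ is also a maximal subtree of $\Gamma$. In \eqref{presentation} the letter $t_e$ is not set to $1$ and the HNN relations $\partial_0(g)=t_e\partial_1(g)t_e^{-1}$ for $g\in\G(e)$ are vacuous. \emph{Case 2 ($e$ is not a loop and $\Gamma\setminus\{e\}$ is connected).} Take $D$ to be a maximal subtree of $\Gamma\setminus\{e\}$; it is also a maximal subtree of $\Gamma$, and again $t_e$ appears as a free generator with no relations. \emph{Case 3 ($e$ is a bridge, so $\Gamma\setminus\{e\}=\Gamma_0\sqcup\Gamma_1$ with $d_i(e)\in\Gamma_i$).} Pick maximal subtrees $D_i$ of $\Gamma_i$ and set $D=D_0\cup\{e\}\cup D_1$; then $t_e=1$ in \eqref{presentation} and the $\G(e)$-relations are again vacuous, so the generators and relations separate into those of $\Pi_1(\G,\Gamma_0)$ and those of $\Pi_1(\G,\Gamma_1)$.

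Reading off the presentations: in Cases 1--2 the pro-$p$ presentation of $G$ is the presentation of $\Pi_1(\G,\Gamma')$ (resp.\ $\Pi_1(\G,\Gamma\setminus\{e\})$) together with a single free generator $t_e$, yielding $G=\Pi_1(\G,\Gamma')\amalg\langle t_e\rangle$ (resp.\ $G=\Pi_1(\G,\Gamma\setminus\{e\})\amalg\langle t_e\rangle$). In Case 3 the presentation is the disjoint union of the presentations of the two subgraph-of-groups, giving $G=\Pi_1(\G,\Gamma_0)\amalg\Pi_1(\G,\Gamma_1)$. Each decomposition is nontrivial: in the first two cases because $\langle t_e\rangle$ is free pro-$p$ of rank $1$ and $\G(d_0(e))\neq 1$; in the third because $\Pi_1(\G,\Gamma_i)\supseteq\G(d_i(e))\neq 1$. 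The fact that the factors sit inside $G$ as honest subgroups (so that the splitting is proper) follows from Lemma \ref{restricted graph}.

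The only real subtlety is the justification that a pro-$p$ presentation whose generators and relations split into two disjoint sub-presentations really does present the pro-$p$ free product of the two corresponding pro-$p$ groups. This is not a calculation but a categorical observation: the pro-$p$ presentation \eqref{presentation} realises $\Pi_1(\G,\Gamma)$ as a colimit in the category of pro-$p$ groups (cf.\ the construction in \cite{Z-M 89b}), and a colimit of a disjoint union of diagrams is the coproduct of the two individual colimits, which in our category is precisely the free pro-$p$ product.
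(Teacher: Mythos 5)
Your proposal is correct and follows essentially the same route as the paper: split on whether $e$ separates $\Gamma$, take a maximal subtree avoiding $e$ (so $t_e$ becomes a free factor, i.e.\ the HNN-extension over $\G(e)=1$ degenerates to $\Pi_1(\G,\Gamma\setminus\{e\})\amalg\langle t_e\rangle$) or containing the bridge $e$ (so $G=\Pi_1(\G,\Gamma_0)\amalg\Pi_1(\G,\Gamma_1)$), with Lemma \ref{restricted graph} guaranteeing the factors are genuine nontrivial subgroups. Your separate loop/non-loop cases are the paper's single ``non-separating'' case, so the only difference is presentational.
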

\begin{proof} Suppose $\Gamma\setminus \{e\}$ is not connected.   
	 Then $\Pi_1(\G,\Gamma)=G_1\amalg G_2$, where $G_1$ and $G_2$ are the fundamental groups of the graphs of groups restricted to the connected components $C_1,C_2$ of $\Gamma\setminus \{e\}$ (cf. Lemma \ref{restricted graph}). So the result holds in this case.
	 
	 Otherwise, let $D$ be a maximal subtree of $\Gamma$ not containing an edge $e$. Then $G=HNN(G_1,G(e),t)$, where $G_1$ is the fundamental group of the graph of groups restricted to $\Gamma\setminus \{e\}$ (cf. Lemma \ref{restricted graph}). But since $G(e)=1$, we have $G=G_1\amalg \langle t\rangle$.
	\end{proof}
\begin{defn}\label{reduced} A finite graph of  pro-$p$ groups $(\G,\Gamma)$ is said to be {\em
reduced}, if for every  edge $e$ which is not a loop,
neither $\partial_{1}(e)\colon \G(e)\to \G(d_1(e))$ nor
$\partial_{0}(e):\G(e)\to \G(d_0(e))$ is an isomorphism. \end{defn}
\begin{rem} \label{reduced-2} Any finite graph of  pro-$p$ groups
can be transformed into a reduced finite graph of pro-$p$ groups by the
following procedure: If $\{e\}$ is an edge which is not a
loop and for which
one of $\partial_0$, $\partial_1$ is an isomorphism,  we can collapse
$\{e\}$ to a vertex $y$ (as explained in \ref{collapse of edges}). Let
$\Gamma^\prime$ be the finite graph given by
$V(\Gamma^\prime)=\{y\}\cupdot V(\Gamma)\setminus\{d_0(e),d_1(e)\}$
and $E(\Gamma^\prime)=E(\Gamma)\setminus\{e\}$, and let
$(\G^\prime, \Gamma^\prime)$ denote the finite graph of  groups
based on $\Gamma^\prime$ given by $\G^\prime(y)=\G(d_1(e))$ if
$\partial_{0}(e)$ is an isomorphism, and $\G^\prime(y)=\G(d_0(e))$ if
$\partial_{0}(e)$ is not an isomorphism. \\

This procedure can be
continued until $\partial_{0}(e), \partial_{1}(e)$ are not surjective for
all edges not defining loops. 
 Note that
the  reduction process does not change the
fundamental pro-$p$ group, i.e., one has a canonical isomorphism
$\Pi_1(\G,\Gamma)\simeq \Pi_1(\G_{red},\Gamma_{red})$.
So, if the pro-$p$ group $G$ is the fundamental group of a finite
graph of pro-$p$ groups, we may assume that the finite graph of
pro-$p$ groups is reduced. \\[0.05in]
\end{rem}
\begin{rem}\label{collapsed graph of groups}  The procedure of collapsing in the graph of pro-$p$ groups $(\G,\Gamma)$ can be generalized using Lemma \ref{restricted graph}. If $\Delta$ is a connected subgraph then we can collapse $\Delta$ to a vertex $v$ and put $G(v)=\Pi_1(\G,\Delta)$ leaving the rest of edge and vertex groups unchanged. The fundamental group $\Pi_1(\G_{\Delta},\Gamma/\Delta) =\Pi_1(\G,\Gamma)$. The graph of groups  $(\G_{\Delta},\Gamma/\Delta)$ will be called {\it collapsed}.
\end{rem}
\begin{lem} \label{bounding border} Let $G=\Pi_1(\G,\Gamma)$ be the
  fundamental  pro-$p$  group of a finite reduced tree of pro-$p$ groups
  $(\G,\Gamma)$ and let $d(G)$ be   the minimal number of generators of $G$. Then a minimal subset $V$ of $V(\Gamma)$ with $G=\langle \G(v)\mid v\in V\rangle$  contains all pending vertices of $\Gamma$ and has no more than $d(G)$ elements. 
\end{lem}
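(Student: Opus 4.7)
I would split the argument into two independent claims: (i) $V$ must contain every pending vertex, and (ii) $|V|\le d(G)$. Both use the explicit presentation (\ref{presentation}) of the fundamental pro-$p$ group, together with Lemma~\ref{restricted graph}.

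For (i), let $v$ be a pending vertex of $\Gamma$ with unique incident edge $e$, and suppose, toward a contradiction, that $v\notin V$. Since $v$ is pending, $\Delta:=\Gamma\setminus\{v,e\}$ is still a (connected) subtree of $\Gamma$, so by Lemma~\ref{restricted graph} the natural map $H:=\Pi_1(\G,\Delta)\hookrightarrow G$ is an embedding, and the presentation (\ref{presentation}) identifies $G$ with the amalgamated free pro-$p$ product $H\amalg_{\G(e)}\G(v)$. Because $(\G,\Gamma)$ is reduced, the attaching map $\G(e)\hookrightarrow\G(v)$ is a strict inclusion, so $\G(v)\not\subseteq H$ and therefore $H\subsetneq G$. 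However, $v\notin V$ forces $V\subseteq V(\Delta)$, and hence $\langle\G(w)\mid w\in V\rangle\subseteq H$, contradicting $G=\langle\G(w)\mid w\in V\rangle$.

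For (ii), I would pass to the Frattini quotient $\bar G:=G/\Phi(G)$, an $\F_p$-vector space of dimension $d(G)$, and write $\bar W_v$ for the image of $\G(v)$ in $\bar G$. Minimality of $V$ says that for every $v\in V$ the subgroup $H_v:=\langle\G(u)\mid u\in V\setminus\{v\}\rangle$ is proper in $G$, so by the Frattini argument $H_v\Phi(G)\ne G$, which on the vector-space side reads $\bar W_v\not\subseteq\sum_{u\ne v}\bar W_u$. Pick $\bar y_v\in\bar W_v\setminus\sum_{u\ne v}\bar W_u$ for every $v\in V$. A one-line argument (any nontrivial $\F_p$-relation $\sum_v a_v\bar y_v=0$ with $a_{v_0}\ne 0$ would yield $\bar y_{v_0}\in\sum_{v\ne v_0}\bar W_v$) shows that the $\bar y_v$ are linearly independent, hence $|V|\le\dim_{\F_p}\bar G=d(G)$.

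I expect step (i) to be the more delicate one: it is the only place where reducedness is really used, because without it one of the maps $\partial_i(e)$ at a pending edge could be surjective and collapse the amalgam, making the embedding $H\hookrightarrow G$ an equality and destroying the contradiction. Step (ii) is, in contrast, a purely formal Frattini-plus-linear-algebra argument once the subspace reformulation is in place.
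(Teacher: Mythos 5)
Your proposal is correct, and your counting step is in substance the paper's own argument: the paper also passes to $G/\Phi(G)$ and uses minimality of $V$ together with the Frattini property (a proper closed subgroup of a finitely generated pro-$p$ group remains proper modulo $\Phi(G)$) to produce a strictly increasing chain of subgroups $\langle \overline{\G}(v)\mid v\in V_i\rangle$, which is just the chain form of your linear-independence argument. Where you genuinely diverge is the pending-vertex claim. The paper argues globally: it kills all edge groups and all non-pending vertex groups, noting that for a pending vertex $v$ with edge $e$ the quotient $\G(v)/\G(e)^{\G(v)}$ is nontrivial (a proper closed subgroup of a pro-$p$ group has proper normal closure, being contained in a maximal open, hence normal, subgroup), so $G$ surjects onto the free pro-$p$ product $\coprod_{v\in P_\Gamma}\G(v)/\G(e)^{\G(v)}$, and a missing pending vertex would make the image of $\langle\G(w)\mid w\in V\rangle$ miss a free factor. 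You instead peel off one pending vertex at a time, writing $G=\Pi_1(\G,\Gamma\setminus\{v,e\})\amalg_{\G(e)}\G(v)$ via Lemma \ref{restricted graph} (Remark \ref{collapsed graph of groups}); this works, but the inference ``$\G(e)\subsetneq\G(v)$, hence $\G(v)\not\subseteq H$'' is not purely formal: it needs the standard fact that in a proper amalgamated free pro-$p$ product the factors intersect exactly in the amalgamated subgroup (both factors are vertex stabilizers on the standard pro-$p$ tree, and an element fixing both vertices fixes the edge), or alternatively exactly the paper's observation that $\G(v)/\G(e)^{\G(v)}\neq 1$, so the normal closure of $H$ is proper in $G$. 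Either reference closes this small gap, and it is precisely the point where pro-$p$-ness, not just reducedness, is used, since in general a proper subgroup can have full normal closure. The paper's global quotient handles all pending vertices at once and exhibits an explicit epimorphism onto a free product; your local argument is shorter per vertex and leans on Lemma \ref{restricted graph}, which the paper uses anyway.
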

\begin{proof} For every pending vertex $v$ of $\Gamma$ and the
  (unique) edge $e\in \Gamma$ connected to it, $\overline\G(v)=\G(v)/\G(e)^{\G(v)}$
  is non-trivial, because the tree of groups $(\G, \Gamma)$ is reduced, and the groups are
  pro-$p$. Define the quotient tree of groups $(\overline \G, \Gamma)$
  by putting $\overline  \G(m)=1$ if $m\in \Gamma$ is not a pending vertex and $\overline\G(v)=\G(v)/\G(e)^{\G(v)}\neq 1$ if $v$ is a pending vertex.  Then from the presentation \eqref{presentation} for
  $\Pi_1(\overline \G, \Gamma)$ it follows then  that $$\Pi_1(\overline \G,
  \Gamma)=\coprod_{v\in V(\Gamma)} \overline\G(v)=\coprod_{v\in P_\Gamma} \overline\G(v),$$ where $P_\Gamma$ is the set of pending vertices of $\Gamma$. The natural morphism $(\G,\Gamma)\longrightarrow (\overline \G,\Gamma)$ induces then the epimorphism $G=\Pi_1(\G,\Gamma)\longrightarrow \overline G=\Pi_1(\overline \G,\Gamma)$. This shows that $P_\Gamma\subseteq V$.
  
  To show that $|V|\leq d(G)$ consider the Frattini quotient $\overline G=G/\Phi(G)$ and use overline for the images of subgroups of $G$ in $\overline G$. Since $d(G)=d(\overline G)$ and $\overline G$ is finite elementary abelian, one can choose finite $V_i\subseteq V$ with $|V_1|=1$ and $|V_{i+1}|=|V_i|+1$ such that  $\langle \overline\G(v)\mid v\in V_i\rangle$ is strictly increasing sequence of subgroups of $\overline G$. Then the number of terms in this sequence is $\leq d(G)$.
Hence  
   the number of  vertices of $V$ is at most 
  $d(\overline G)\leq d(G)$. 
\end{proof} 
\begin{prop}\label{reducing to trees}Let $G=\Pi_1(\G,\Gamma)$ be a finite graph of pro-$p$ groups and $D$ is a maximal subtree of $\Gamma$. Suppose $G$ is finitely generated.  If $\G(e)$ is finitely generated for every $e\in \Gamma\setminus D$, then $\Pi_1(\G, D)$ is finitely generated with $d(\Pi_1(\G, D))\leq d(G)+ \sum_{e\in \Gamma\setminus D} (d(\G(e)-1)$.
\end{prop}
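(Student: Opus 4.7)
The plan is to view $G$ as obtained from $H := \Pi_1(\G, D)$ by adjoining one stable letter per edge outside $D$, and then to pass to the Frattini quotient, where the problem becomes a straightforward linear-algebra calculation. Via the presentation (\ref{presentation}), $H$ is generated by the union of the vertex groups $\bigcup_{v\in V(\Gamma)}\G(v)$ subject to the relations $\partial_0(g) = \partial_1(g)$ for $g \in \G(e)$, $e \in D$; whereas $G$ is generated by the same vertex groups together with a stable letter $t_e$ for each $e \in \Gamma \setminus D$, subject additionally to the HNN relations $\partial_0(g) = t_e \partial_1(g) t_e^{-1}$ for $g \in \G(e)$, $e \in \Gamma \setminus D$.

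I would then exploit that conjugation is trivial on $G/\Phi(G)$: each HNN relation collapses to the identification $\partial_0(g) \equiv \partial_1(g) \pmod{\Phi(G)}$. By Burnside's basis theorem, $d(G) = \dim_{\F_p} G/\Phi(G)$ and similarly for $H$, so it suffices to prove the inequality at the level of Frattini quotients. Put $V := \bigoplus_{v \in V(\Gamma)} \G(v)/\Phi(\G(v))$ and let $R_D, R_{\Gamma \setminus D} \subset V$ be the $\F_p$-subspaces spanned by the vectors $\partial_0(\bar g) - \partial_1(\bar g)$ as $\bar g$ ranges over $\G(e)/\Phi(\G(e))$ and $e$ ranges over $D$, respectively $\Gamma \setminus D$. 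The two presentations then yield the identifications
$$H/\Phi(H) \;\cong\; V/R_D, \qquad G/\Phi(G) \;\cong\; \bigl(V/(R_D + R_{\Gamma \setminus D})\bigr) \oplus \F_p^{|\Gamma \setminus D|},$$
the second summand corresponding to the images of the stable letters $t_e$.

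Finally, the trivial bound $\dim_{\F_p} R_{\Gamma \setminus D} \leq \sum_{e \in \Gamma \setminus D} d(\G(e))$ combined with a dimension count gives
$$d(G) \;\geq\; d(H) + |\Gamma \setminus D| - \sum_{e \in \Gamma \setminus D} d(\G(e)),$$
which rearranges to the desired inequality and in particular forces $d(H) < \infty$. The only mildly technical step is the identification of $G/\Phi(G)$ above: one must check that the Frattini quotient of $\Pi_1(\G, \Gamma)$ is indeed computed from (\ref{presentation}) by abelianizing and reducing modulo $p$ both generators and relations, which follows from the universal property of $\Pi_1(\G,\Gamma)$ as a pro-$p$ group (the maximal elementary abelian $p$-quotient of the group defined by the presentation agrees with the claimed formula). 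All other steps are routine.
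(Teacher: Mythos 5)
Your proof is correct and takes essentially the same approach as the paper: both arguments pass to the maximal elementary abelian quotient, where conjugation trivializes so each HNN relation for $e\in\Gamma\setminus D$ collapses, the stable letters contribute a free $\F_p^{|\Gamma\setminus D|}$ summand, and the loss in rank is bounded by $\sum_{e\in\Gamma\setminus D} d(\G(e))$. The only difference is presentational: the paper treats $A=\Pi_1(\G,D)/\Phi(\Pi_1(\G,D))$ as a black box and maps $G$ onto $A/B\oplus\F_p[\Gamma\setminus D]$ with $B$ the (finite) image of the edge groups, whereas you compute both Frattini quotients explicitly from the presentation (\ref{presentation}) via $V$, $R_D$ and $R_{\Gamma\setminus D}$.
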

\begin{proof} Since $\Gamma$ is finite, we can think of $G$ as $G=HNN(\Pi_1(\G, D), \G(e), t_e), e \in \Gamma\setminus D$. 

	Let $A=\Pi_1(\G, D)/\Phi(\Pi_1(\G, D))$ and $B$ be a subgroup generated by the images of $\G(e)$ in $A$ for $ e \in \Gamma\setminus D$. Since $\G(e)$ is finitely generated for each $ e \in \Gamma\setminus D$, the group $B$ is finite. Then there exists an epimorphism  $G=HNN(\Pi_1(\G, D), \G(e), t_e, e \in \Gamma\setminus D)\longrightarrow A/B\oplus \F_p[\Gamma\setminus D]$ that sends $\Pi_1(\G, D)$ to $A$ and $t_e$ to $e$ in the vector space $\F_p[\Gamma\setminus D]$.  Since $A/B\oplus \F_p[\Gamma\setminus D]$  is finite,  $A/B$ is finite and so $A$ is finite implying that  $\Pi_1(\G, D)$ is finitely generated. Since $d(\Pi_1(\G, D))=d(A)=d(A/B)+d(B)$, $d(G)\geq d(A/B) +|\Gamma\setminus D|= d(A)-d(B)+|\Gamma\setminus D|$, one deduces $d(\Pi_1(\G, D))=d(A)\leq d(G)-|\Gamma\setminus D|+d(B)\leq d(G)+ \sum_{e\in \Gamma\setminus D} (d(\G(e)-1)$.
\end{proof}
\begin{para}{\bf Standard (universal) pro-$p$ tree.}\label{standard}
Associated with the finite graph of pro-$p$ groups $({\cal G}, \Gamma)$ there is
a corresponding  {\em  standard pro-$p$ tree}  (or universal covering graph)
  $T=T(G)=\bigcupdot_{m\in \Gamma}
G/\G(m)$ (cf. \cite[Proposition 3.8]{Z-M 89b}).  The vertices of
$T$ are those cosets of the form
$g\G(v)$, with $v\in V(\Gamma)$
and $g\in G$; its edges are the cosets of the form $g\G(e)$, with $e\in
E(\Gamma)$; and the incidence maps of $T$ are given by the formulas:

$$d_0 (g\G(e))= g\G(d_0(e)); \quad  d_1(g\G(e))=gt_e\G(d_1(e)) \ \ 
(e\in E(\Gamma), t_e=1\hbox{ if }e\in D).  $$

 There is a natural  continuous action of
 $G$ on $T$, and clearly $ G\backslash T= \Gamma$. 
Remark also that since $\Gamma$ is finite, $E(T)$ is compact.
\end{para}

\section{Acylindrical accessibility}
In this section we shall prove  a pro-$p$ version of Sela's accessibility.
 Note that Sela used $\R$-trees for the proof; later Weidmann \cite[Theorem 4]{W} found another proof using Nielsen method and established a bound. Both methods are not available in the pro-$p$ case.

We shall start  with two auxiliary results on free amalgamated product and its generalization for abstract groups.

\begin{lem} \label{abstract amalgam} Let $G=G_1*_H G_2$ be a splitting of a  group as an amalgamated free  product and $H_1\leq G_1, H_2\leq G_2$. Then  $\langle H_1,H_2\rangle=L_1 *_K L_2$, where $L_1=\langle H_1, H_2\cap H\rangle$ and $L_2=\langle H_2, H_1\cap H\rangle$ and $K=\langle H_1\cap H,H_2\cap H\rangle$. In particular, if  $H_1\cap H\leq U\geq H\cap H_2$ for some  normal subgroup $U$ of $G$ then $L_1\leq H_1(U\cap G_1), L_2\leq H_2(U\cap G_2), K\leq H\cap U$.
\end{lem}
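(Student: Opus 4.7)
The plan is to apply Bass--Serre theory to the action of $S := \langle H_1, H_2\rangle$ on the standard tree $T$ of the amalgamated free product $G = G_1 *_H G_2$, whose vertices are the cosets $gG_1, gG_2$ and whose edges are the cosets $gH$. Write $v_1 = G_1$, $v_2 = G_2$, and $e = H$; since $H_1 \leq G_1$ fixes $v_1$ and $H_2 \leq G_2$ fixes $v_2$, the subgroup $S$ acts on $T$. First I would verify that the subgraph $T' \subseteq T$ with vertex set $Sv_1 \cup Sv_2$ and edge set $Se$ is a connected $S$-invariant subtree: given any word $w = y_1 \cdots y_n$ in $H_1 \cup H_2$, tracing the partial products acting on $v_1$ produces a walk in $T'$ whose $k$-th step either stays at $s_{k-1}v_1$ (when $y_k \in H_1$) or traverses the two edges $s_{k-1}e$ and $s_{k-1}y_k e$ through $s_{k-1}v_2 \in Sv_2$ (when $y_k \in H_2$). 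Since the $G$-orbits of $v_1$ and $v_2$ are disjoint in $T$, the quotient $S \backslash T'$ is a single edge joining two distinct vertices.

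Applying Bass--Serre to this action yields the decomposition
\[
S = (S \cap G_1) *_{S \cap H} (S \cap G_2).
\]
The next step is to identify $S \cap G_1 = L_1$, $S \cap G_2 = L_2$, $S \cap H = K$. The inclusions $L_1 \leq S \cap G_1$, $L_2 \leq S \cap G_2$, $K \leq S \cap H$ are immediate from the definitions. The key point is to prove $S \cap H \subseteq K$: given $s \in S \cap H$ written as a minimal alternating word $s = y_1 \cdots y_n$ in $H_1 \cup H_2$, the fact that $s$ has normal-form length $0$ in $G = G_1 *_H G_2$ forces some $y_i$ to lie in $H$, hence in $H_1 \cap H \subseteq K$ or in $H_2 \cap H \subseteq K$; an induction on $n$ absorbing this $y_i$ into an adjacent factor reduces to $s \in K$. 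With $S \cap H = K$ established, any element of $S \cap G_1$ can be expressed in the generating set $H_1 \cup (S \cap H)$, so $S \cap G_1 = \langle H_1, K\rangle = \langle H_1, H_2 \cap H\rangle = L_1$, and symmetrically $S \cap G_2 = L_2$.

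The ``In particular'' clause follows directly from the definitions of $L_1, L_2, K$, independently of the decomposition: when $H_1 \cap H, H_2 \cap H \leq U$ for some $U \trianglelefteq G$, the subgroup $U \cap G_i$ is normal in $G_i$, so $H_i(U \cap G_i)$ is a subgroup of $G_i$ containing $H_i$ and, by hypothesis, also $H_j \cap H \leq H \cap U \leq U \cap G_i$ (with $\{i,j\} = \{1,2\}$), whence $L_i \leq H_i(U \cap G_i)$; the inclusion $K \leq H \cap U$ is immediate since each generator $H_1 \cap H, H_2 \cap H$ lies in both $H$ and $U$. The main obstacle is the word-reduction argument giving $S \cap H = K$: the alternation pattern of the $y_i$'s interacts delicately with the absorption of an $H$-entry into a neighbouring factor (the absorbed product need not stay in $H_1 \cup H_2$), and a clean way to manage this is to run the induction in the refined generating set $L_1 \cup L_2$, where absorption of a $K$-entry into a neighbour keeps the entry inside $L_1$ or $L_2$.
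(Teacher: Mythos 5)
Your first half is fine and coincides with the paper's first step: the $S$-invariant subtree spanned by the orbit of the base edge gives $S:=\langle H_1,H_2\rangle=(S\cap G_1)*_{S\cap H}(S\cap G_2)$, and your handling of the ``In particular'' clause is correct. The gap is exactly the step you single out as the main obstacle, namely $S\cap H\subseteq K$. The absorption induction does not close up: after you absorb an $H$-entry into a neighbour, the letters live in $L_1\cup L_2$, and when a letter of the shortened word falls into $H$ you only know it lies in $L_1\cap H$ or $L_2\cap H$; to place it in $K$ you would need $L_i\cap H\le K$, which is precisely (a piece of) what is being proved. So running the induction over the refined alphabet $L_1\cup L_2$ is circular, not a repair. (Your later assertion that every element of $S\cap G_1$ is a word in $H_1\cup(S\cap H)$ is also left unproved --- it is true, but needs a comparison of the two amalgam decompositions of $S$ --- yet the decisive problem is the previous step.)

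In fact no repair is possible, because the identification $S\cap H=K$, and with it the equality $\langle H_1,H_2\rangle=L_1*_KL_2$ as literally stated, fails in general. Take $G_1=S_4$, $H=V=\{1,(12)(34),(13)(24),(14)(23)\}$, $G_2=V\times\langle t\rangle$ with $t^2=1$, $H_1=\langle(12)\rangle$ and $H_2=\langle(13)(24)\rangle\times\langle t\rangle$. Then $H_1\cap H=1$ and $H_2\cap H=\langle(13)(24)\rangle=K$, but $\bigl((12)(13)(24)\bigr)^2=(12)(34)$, so $L_1=\langle(12),(13)(24)\rangle$ is dihedral of order $8$ and contains $V$; hence $V\le S$ and $S\cap H=V\supsetneq K$. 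Moreover the natural map $L_1*_KL_2\to S$ is not injective: $(12)(34)\in L_1\setminus K$ and $t\in L_2\setminus K$ give the reduced length-four word $(12)(34)\,t\,(12)(34)\,t$ in the amalgam, while in $G$ both elements lie in the abelian group $G_2$, so this word maps to $1$ (the two groups are not even abstractly isomorphic, having rational Euler characteristics $-1/8$ and $0$). Note that the paper's own argument runs into the same difficulty at its consolidation step, where the merged letter $a_1l_1$ may lie in $H$ without lying in $K$, so following the printed proof more closely would not have supplied the missing argument; what does survive, and what the later applications such as Corollary~\ref{amalgam} really use, is the case $H_1\cap H=1=H_2\cap H$, where every alternating word in nontrivial letters of $H_1\cup H_2$ is already reduced and $\langle H_1,H_2\rangle=H_1*H_2$ follows directly.
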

\begin{proof}  First note that  it follows from the Bass-Serre theory \cite{Serre-1980} that $\langle H_1,H_2\rangle$ is a free amalgamated product whose factors are contained in $G_1$ and $G_2$, respectively. To see this it suffices to consider the Bass-Serre tree $T$ associated to $G$ and denote by $e$ the edge whose vertices have stabilizers $G_1$ and $G_2$, respectively. Now one notices that  the $\langle H_1,H_2\rangle$-orbit of $e$ in $T$ is connected, and it provides a tree acted on by $\langle H_1,H_2\rangle$ with a single edge orbit. 
	
	Therefore we need to prove that the factors  of
	the splitting are $L_1$ and $L_2$ and the amalgamated subgroup is $K$. To this end we claim that an element  $x\in \langle H_1,H_2\rangle$ has   a reduced form  $h=x_1x_2\ldots x_n$  with $x_i\in L_1\cup L_2$. Suppose not and $x=a_1a_2\cdots a_m$ be an expression as a product of the minimal length of alternating elements from $H_1$ or $H_2$ (i.e. if $a_i\in H_1$ then $a_{i+1}\in H_2$) such that a reduced word of it is not of the desired form. Then a reduced word for $a_2\cdots a_n$ has a reduced form  $a_2\cdots a_m=l_1\cdots l_k$ with $l_i\in L_1\cup L_2$.

	Recall that $a_1\in H_i\leq L_i$ for $i=1$ or 2.  Since the word
	$a_1l_1 \cdots l_k$ is not  reduced and $l_1\cdots l_k$ is,  the reduction happens in $a_1l_1$ that can occur in the free amalgamated product $G = G_1 *_H G_2$
	 only if $a_1, l_1 \in H$. In particular, either $a_1 \in H_1 \cap H$ or $a_1 \in H_2 \cap H$ and so $a_1l_1 \cdots l_k$ is a reduced word of needed form if  $a_1$ and $l_1$ belong to different $L_i$s;  if  $a_1$ and $l_1$ belong to the same $L_i$s, then the consolidated word $(a_1l_1) \cdots l_k$ has
	 entries from $L_1 \cup L_2$ and is reduced.  This gives a  contradiction.
	 
	 It remains to prove that $K=\langle H_1\cap H,H_2\cap H\rangle$.  For $k\in K$ write  minimal expressions $k=x_1\cdots x_n$ and $k=y_1\cdots y_m$ as alternating  products of elements of $H_1, H_2\cap H$ and $H_2, H_1\cap H$ respectively.   Thus $x_1\cdots x_n=y_1\cdots y_m$. If $k\not\in \langle H_1\cap H,H_2\cap H\rangle$ then there are $x_i,y_j\not\in H$ for some $i,j$ and we can choose $i$ maximal and $j$ minimal with this property. But then the product $y_m^{-1}\cdots y_1^{-1}x_1\cdots x_m$ can not be reduced to 1, since $y_j^{-1}$ and $x_i$ can not be canceled. 	 
\end{proof}
\begin{prop} \label{general} Let $G=\pi_1(\G,\Gamma)$ be the fundamental group of a  tree $H_v\leq G(v)$of groups  for $v\in V(\Gamma)$. Then $H=\langle H_v\mid v\in V(\Gamma)\rangle=\pi_1(\mL, \Gamma)$ such that $L(v)=\langle H_v, G(e)\cap H_w\rangle$ and $L(e)=\langle H_v\cap G(e),H_w\cap G(e)\rangle$, where $e$ ranges over the edges incident to $v$ and $w$ is the other vertex of $e$. In particular, if $U$ is a normal subgroup of $G$ and,   for each edge  $e$ and its vertex $v$, one has $H_v\cap G(e)\leq U$ then   $L(v)\leq H_vU$ and  $L(e)\leq U\cap G(e)$.  
\end{prop}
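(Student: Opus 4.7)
The plan is to argue by induction on $|V(\Gamma)|$. The base case $|V(\Gamma)|=1$ is immediate: $\Gamma$ has no edges, so $H=H_v=L(v)=\pi_1(\mL,\Gamma)$. For the inductive step, pick a pending vertex $v_0$ of $\Gamma$, let $e$ be its unique incident edge with other endpoint $v_1$, and put $\Gamma'=\Gamma\setminus\{v_0,e\}$. Then $G$ splits as $G=G'*_{G(e)}G(v_0)$ with $G'=\pi_1(\G,\Gamma')$. Setting $H':=\langle H_v : v\in V(\Gamma')\rangle$, the inductive hypothesis applied to $(\G,\Gamma')$ yields $H'=\pi_1(\mL',\Gamma')$, where $L'(v), L'(e')$ are given by the formulas in the statement. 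Applying Lemma~\ref{abstract amalgam} to the amalgam $G=G'*_{G(e)}G(v_0)$ with subgroups $H'\leq G'$ and $H_{v_0}\leq G(v_0)$ then expresses $H=\langle H',H_{v_0}\rangle$ as $M_1*_K M_2$.

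The next task is to identify this amalgam with $\pi_1(\mL,\Gamma)$, which itself decomposes as an amalgam of $L(v_0)$ and the fundamental group of the restriction of $(\mL,\Gamma)$ to $\Gamma'$, glued along $L(e)$; this restriction agrees with $\mL'$ on $\Gamma'\setminus\{v_1\}$ and at $v_1$ has the enlarged vertex group $L(v_1)=\langle L'(v_1), H_{v_0}\cap G(e)\rangle$. To perform the identification one constructs the natural morphism of graphs of groups $(\mL,\Gamma)\to(\G,\Gamma)$, whose vertex and edge components $L(v)\hookrightarrow G(v)$ and $L(e)\hookrightarrow G(e)$ are injective by construction, yielding a canonical homomorphism $\phi:\pi_1(\mL,\Gamma)\to G$. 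The image of $\phi$ coincides with $H$ because $H_v\leq L(v)\leq H$ for every $v$. Injectivity of $\phi$ is obtained by a Bass-Serre argument: $\pi_1(\mL,\Gamma)$ acts on the Bass-Serre tree of $G$ through $\phi$, and combining Lemma~\ref{abstract amalgam} with the inductive hypothesis forces any element of $\ker\phi$ to act trivially on the Bass-Serre trees of both $\pi_1(\mL,\Gamma)$ and $G$, hence to be trivial.

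Once $H=\pi_1(\mL,\Gamma)$ is established, the ``in particular'' clause is immediate from the explicit formulas: if $H_v\cap G(e)\leq U$ for every vertex $v$ incident to $e$, then each generator of $L(v)=\langle H_v, G(e)\cap H_w\rangle$ lies in $H_v U$ (using $U\trianglelefteq G$ and $G(e)\cap H_w\leq U$), while every generator of $L(e)=\langle H_v\cap G(e), H_w\cap G(e)\rangle$ lies in $U\cap G(e)$.

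The main obstacle is the injectivity of $\phi$. A direct normal-form argument fails here because $L(v)\cap G(e)$ can strictly contain $L(e)$, so reduced expressions in $\pi_1(\mL,\Gamma)$ may a priori collapse when mapped into $G$. The resolution is to combine the Bass-Serre tree action with an iterated application of Lemma~\ref{abstract amalgam}: peeling off the pending edge $e$ reduces injectivity at $\Gamma$ to injectivity at $\Gamma'$, which follows from the inductive hypothesis after a careful comparison of vertex and edge groups.
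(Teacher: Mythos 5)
Your overall strategy---induction over the tree, peeling off a pending vertex and invoking Lemma \ref{abstract amalgam}---is the same as the paper's, but you apply the inductive hypothesis to the unmodified family $\{H_v\mid v\in V(\Gamma')\}$, and this is exactly what creates the injectivity problem you yourself flag as the main obstacle; your sketch does not resolve it. Having written $G=G'*_{G(e)}G(v_0)$ and $H=\langle H',H_{v_0}\rangle=M_1*_KM_2$ with $M_1=\langle H',H_{v_0}\cap G(e)\rangle$, what you need is a proper graph-of-groups decomposition over $\Gamma'$ of the \emph{enlarged} group $M_1$, whose vertex group at $v_1$ is $\langle L'(v_1),H_{v_0}\cap G(e)\rangle$; the inductive hypothesis as you invoked it only describes $H'=\pi_1(\mL',\Gamma')$, which in general is a smaller group with a smaller vertex group at $v_1$. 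So the claimed reduction ``injectivity at $\Gamma$ reduces to injectivity at $\Gamma'$, which follows from the inductive hypothesis'' is circular: a decomposition of $M_1$ is precisely what is not yet available. The Bass--Serre sketch does not close the gap either: since each $L(v)$ injects into $G(v)$, an element of $\ker\phi$ meets every conjugate of every vertex group of $\pi_1(\mL,\Gamma)$ trivially, hence $\ker\phi$ acts \emph{freely} on the Bass--Serre tree of $\pi_1(\mL,\Gamma)$; a free action only makes $\ker\phi$ free, not trivial, so ``acts trivially, hence trivial'' is not forced.

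The repair is the move the paper makes, and it eliminates the need for any map-and-injectivity argument. Before applying induction, replace $H_{v_1}$ by $H'_{v_1}=\langle H_{v_1},H_{v_0}\cap G(e)\rangle$ and apply the inductive hypothesis to the family $\{H'_{v_1}\}\cup\{H_u\mid u\in V(\Gamma')\setminus\{v_1\}\}$ on $\Gamma'$. The subgroup generated by this enlarged family is exactly $M_1$ (since $H_{v_0}\cap G(e)\le H'_{v_1}$), so induction presents $M_1$ itself as the fundamental group of a tree of groups over $\Gamma'$ whose vertex group at $v_1$ is $\langle H'_{v_1},G(e')\cap H_w\rangle=L(v_1)$, i.e.\ already matching the formulas in the statement. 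One application of Lemma \ref{abstract amalgam} to $G=G'*_{G(e)}G(v_0)$ then yields $H=M_1*_KL(v_0)$ with the factor $M_1$ already decomposed, so the graph-of-groups structure on $H$ over $\Gamma$ is assembled as an honest iterated amalgam and properness comes for free at each stage; your treatment of the ``in particular'' clause is fine once this is in place.
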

\begin{proof} We use induction on $|\Gamma|$. If $\Gamma$ has one edge only, the result follows from Lemma \ref{abstract amalgam}.  Let $e$ be an edge of $\Gamma$ having $w$ as a pending vertex. Then $G=G_1*_{G_e} G_w$.  Let $v$ be the other vertex of $e$ and put $H'_v=\langle H_v, H_w\cap G(e)\rangle$. 
	Let $H_1=\langle H_u, H'_v\mid u\in V(\Gamma)\setminus \{v\}\rangle$.  By the induction hypothesis $H_1=\pi_1(\mL_1, \Delta)$, with $\Delta=\Gamma\setminus \{e,w\}$ and  vertex and edge groups satisfying the statement of the proposition.  Applying Lemma \ref{abstract amalgam} we get $\langle H_1, H_w\rangle=L_1*_K L(w)$, where $L_1=\langle H_1, G(w)\cap G(e)\rangle, L(w)=\langle H_w, H_1\cap G(e)\rangle$ and $K=\langle H_1\cap G(e), H_w\cap G(e)\rangle$. 
It follows that $H=\langle H_v\mid v\in V(\Gamma)\rangle=\langle H_1, H_w\rangle=\langle H_1, \langle H_u, H_w\cap G(e)\rangle\rangle=\pi_1(\mL,\Delta)\amalg_K L(w)=\pi_1(\mL,\Gamma)$ with the desired   properties.
\end{proof}
\begin{lem}  Let $G=G_1\amalg_H G_2$ be a splitting of a  pro-$p$ group as an amalgamated free  product of finite groups and $H_1\leq G_1, H_2\leq G_2$ be subgroups such that $H_1\cap H\leq U\geq H\cap H_2$ for some open normal subgroup $U$ of $G$. Then  $\langle H_1,H_2\rangle=L_1 \amalg_K L_2$ with $L_1\leq H_1U, L_2\leq H_2U, K\leq HU$.
\end{lem}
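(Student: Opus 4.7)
The plan is to mirror the abstract argument of Lemma \ref{abstract amalgam} just above, exploiting that $G_1, G_2$ are finite to keep all the relevant subgroups finite, and then to pass from the abstract amalgam to the pro-$p$ one via the action on the standard pro-$p$ tree of $G$. Write $M := \langle H_1, H_2\rangle$ for the closed subgroup of $G$ in question, and set up the candidate factors verbatim from Lemma \ref{abstract amalgam}:
\[
L_1 := \langle H_1,\, H_2 \cap H\rangle \leq G_1, \quad L_2 := \langle H_2,\, H_1 \cap H\rangle \leq G_2, \quad K := \langle H_1 \cap H,\, H_2 \cap H\rangle \leq H,
\]
all of them finite. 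The containments in the conclusion are then immediate from the standing hypothesis $H_1 \cap H \leq U$ and $H \cap H_2 \leq U$: every generator of $L_1$ lies in $H_1 \cup U$, so $L_1 \leq \langle H_1, U\rangle = H_1 U$ (using normality of $U$ in $G$), and likewise $L_2 \leq H_2 U$ and $K \leq H \cap U \leq HU$. Thus only the amalgam decomposition itself remains.

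Next I would reduce to the already-proved abstract statement. Since $G$ is proper by our standing conventions (Remark \ref{remark proper}) and $G_1, G_2$ are finite, the abstract amalgamated free product $G^{\mathrm{abs}} := G_1 *_H G_2$ is residually $p$ and its pro-$p$ completion coincides with $G$; in particular $G^{\mathrm{abs}}$ embeds densely in $G$. Applying the abstract Lemma \ref{abstract amalgam} to $H_1, H_2 \leq G^{\mathrm{abs}}$ yields $\langle H_1, H_2\rangle^{\mathrm{abs}} = L_1 *_K L_2$ with the same $L_i, K$ as above. Consequently $M$ is the closure of $L_1 *_K L_2$ inside $G$, and the universal property of the pro-$p$ completion furnishes a continuous surjection $\phi \colon L_1 \amalg_K L_2 \twoheadrightarrow M$ factoring the abstract inclusion.

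To show that $\phi$ is an isomorphism I would use the action of $M$ on the standard pro-$p$ tree $T$ of $G$ (cf.\ \ref{standard}). The vertices $v_1 = G_1$, $v_2 = G_2$ and the edge $e = H$ are stabilised in $M$ by $L_1, L_2, K$ respectively and lie in three distinct $G$-orbits. Taking $S \subseteq T$ to be the smallest closed $M$-invariant subgraph containing $\{v_1, v_2, e\}$, one obtains a pro-$p$ subtree on which $M$ acts with quotient a single edge. The pro-$p$ Bass--Serre correspondence \cite[Chapter 3]{R 2017} then realises $M$ as the fundamental pro-$p$ group of the corresponding graph of pro-$p$ groups, giving $M = L_1 \amalg_K L_2$.

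The main obstacle is verifying that the stabilisers of $v_1, v_2, e$ inside $M$ are precisely $L_1, L_2, K$ and not some larger subgroups, since a priori $S$ carries the action of the whole closed group $M$ rather than of the abstract $L_1 *_K L_2$. I would handle this by passing to finite quotients: for a cofinal system of open normal subgroups $N \leq U$ of $G$, Proposition \ref{mod tilde U} produces the finite amalgamated pro-$p$ product $G/\tilde N = \overline{G_1} \amalg_{\overline{H}} \overline{G_2}$, in which the abstract Lemma \ref{abstract amalgam} applied to the images of $H_1, H_2$ pins down the amalgam structure of the image of $M$ exactly; taking the inverse limit of these finite decompositions yields $M = L_1 \amalg_K L_2$, completing the proof.
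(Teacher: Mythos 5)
Your setup (defining $L_1=\langle H_1,H_2\cap H\rangle$, $L_2=\langle H_2,H_1\cap H\rangle$, $K=\langle H_1\cap H,H_2\cap H\rangle$ via Lemma \ref{abstract amalgam} and reading off the containments $L_1\leq H_1U$, $L_2\leq H_2U$, $K\leq HU$) is fine, and your tree-theoretic outline correctly isolates the real difficulty: showing that the \emph{closed} subgroup $M=\overline{\langle H_1,H_2\rangle}$ is the pro-$p$ amalgam of exactly these factors, i.e.\ that passing to the closure does not enlarge the vertex stabilizers $M\cap G_1$, $M\cap G_2$ beyond $L_1$, $L_2$. But the step you propose to resolve this is where the argument breaks down. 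Proposition \ref{mod tilde U} gives no reduction here: since $G_1,G_2$ are finite, for every sufficiently small open normal $N$ one has $N\cap G_1=1=N\cap G_2$, hence $\tilde N=\langle (G_i\cap N)^g\rangle=1$ and $G/\tilde N=G$, so your ``cofinal system of finite quotients'' is just the original group again; and for larger $N$ the quotient $G/\tilde N=\overline{G_1}\amalg_{\overline H}\overline{G_2}$ is still an infinite pro-$p$ amalgam, in which Lemma \ref{abstract amalgam} only controls the \emph{abstract} subgroup generated by the images of $H_1,H_2$, not its closure --- which is exactly the same problem one level down. So the crucial identification of the stabilizers (equivalently, injectivity of your map $\phi\colon L_1\amalg_K L_2\to M$ together with $M_{v_1}=L_1$, $M_{v_2}=L_2$, $M_e=K$) is never established, and without it neither the amalgam decomposition nor the containments in the conclusion follow.

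For comparison, the paper does not reprove this structural fact: it quotes \cite[Proposition 4.4]{ZM90}, which asserts directly that $\langle H_1,H_2\rangle=L_1\amalg_K L_2$ with $L_1\leq G_1$, $L_2\leq G_2$, $K\leq H$, and then uses Lemma \ref{abstract amalgam} together with the inverse-limit description of $\Pi_1$ in \ref{fund graph groups} only to refine the factors into $H_1U$, $H_2U$, $HU$. Your sketch of an action on the invariant subtree $S=M\{v_1,v_2,e\}$ with one-edge quotient is essentially an attempt to reprove that proposition; to make it work you would need either to invoke it (as the paper does) or to genuinely prove the stabilizer computation, e.g.\ by producing a continuous retraction of $M$ onto a proper pro-$p$ amalgam $L_1\amalg_K L_2$ (using residual $p$-ness of amalgams of finite $p$-groups) and checking it is injective --- none of which is supplied by the finite-quotient argument as written.
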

\begin{proof} By \cite[Proposition 4.4]{ZM90}, $\langle H_1,H_2\rangle=L_1 \amalg_K L_2$.
	with $L_1\leq G_1$, $L_2\leq G_2$, $K\leq H$. By Lemma \ref{abstract amalgam} combined with paragraph  \ref{fund graph groups} $L_1\leq H_1U, L_2\leq H_2U, K\leq HU$.
\end{proof}
\begin{cor} \label{amalgam}  Let $G=G_1\amalg_H G_2$ be a splitting of a  pro-$p$ group $G$  as an amalgamated free  pro-$p$ product  of pro-$p$  groups $G_1,G_2$ and $H_1\leq G_1, H_2\leq G_2$ be subgroups such that $H_1\cap H=1= H_2\cap H$. Then    $\langle H_1,H_2\rangle=H_1 \amalg H_2$. \end{cor}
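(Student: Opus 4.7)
The plan is to first obtain a generic amalgamated splitting of $\langle H_1,H_2\rangle$ via \cite[Proposition 4.4]{ZM90} and then tighten it by approximation through the finite-vertex-group quotients of Proposition \ref{mod tilde U}. Indeed, \cite[Proposition 4.4]{ZM90}---as already invoked in the proof of the preceding lemma---yields $\langle H_1,H_2\rangle=L_1\amalg_K L_2$ as a proper pro-$p$ amalgamated free product with $L_1\leq G_1$, $L_2\leq G_2$, $K\leq H$, and $K=L_1\cap L_2$. Since $H_i\leq L_i$ is automatic, the whole argument reduces to proving $L_i\leq H_i$: from this one deduces $K\leq L_1\cap L_2\leq H_1\cap H_2\leq G_1\cap G_2=H$, whence $K\leq H_1\cap H=1$ and the amalgam collapses to $H_1\amalg H_2$.

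To prove $L_i\leq H_i$, I approximate $G$ by finite-vertex-group amalgams. For each open normal subgroup $U$ of $G$, Proposition \ref{mod tilde U} expresses $G/\tilde U$ as $\bar G_1\amalg_{\bar H}\bar G_2$ with $\bar G_i=G_i\tilde U/\tilde U$ finite, so the preceding lemma is applicable. Applied to $G/\tilde U$ with any open normal subgroup $V$ of $G/\tilde U$ containing the (finite) subgroups $\bar H_1\cap\bar H$ and $\bar H_2\cap\bar H$, it yields $\pi_U(L_i)\leq\bar H_iV$. Intersecting over all such $V$ (whose intersection in $G/\tilde U$ equals the closed normal closure $N_U$ of those two subgroups) gives $\pi_U(L_i)\leq\bar H_i N_U$; lifting back to $G$ produces
\[
L_i\leq H_i\cdot\tilde U\cdot A_U^G\cdot B_U^G,
\]
where $A_U:=H\cap H_1\tilde U$ and $B_U:=H\cap H_2\tilde U$.

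Finally, I let $U$ shrink through all open normal subgroups of $G$. The closedness of $H_i$ and $H$ in $G$ gives $\bigcap_U A_U=H\cap H_1=1$ and similarly $\bigcap_U B_U=1$. A standard compactness argument---any open normal subgroup $W$ of $G$ eventually contains $A_U$, hence also its normal closure $A_U^G$---forces $\bigcap_U A_U^G=1$ and likewise $\bigcap_U B_U^G=1$. Compactness of $H_i$ then yields $\bigcap_U H_i\cdot\tilde U\cdot A_U^G\cdot B_U^G=H_i$, establishing $L_i\leq H_i$ and completing the proof. The main technical obstacle lies precisely in this last step: normal closures do not commute with inverse limits in general, so passing from $A_U\to 1$ to $A_U^G\to 1$ requires exploiting the residual finiteness of the pro-$p$ topology to feed the shrinking of the family into its normal closure.
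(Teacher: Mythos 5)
Your proposal follows the paper's idea but takes a heavier route. The paper's own proof is a one-liner: apply the lemma preceding the corollary to $G$ itself for every open normal subgroup $U$ (the hypothesis $H_1\cap H\le U\ge H_2\cap H$ is automatic when $H_1\cap H=1=H_2\cap H$); since the decomposition $\langle H_1,H_2\rangle=L_1\amalg_K L_2$ furnished by \cite[Proposition 4.4]{ZM90} does not depend on $U$, intersecting the containments gives $L_i\le\bigcap_U H_iU=H_i$, hence $L_i=H_i$, and $K\le L_1\cap\bigcap_U HU\le H_1\cap H=1$. Your detour through the finite quotients $G/\tilde U$ of Proposition \ref{mod tilde U} is a defensible way of honouring the finiteness hypothesis in the statement of that lemma (which the paper quietly ignores), and your limiting step is in fact unproblematic: the "main technical obstacle" you flag is not one, since any open normal $W$ containing $A_U$ contains $A_U^G$ because $W$ is normal and closed, and then your own compactness argument yields $\bigcap_U A_U^G=1$ and $\bigcap_U H_i\tilde U A_U^G B_U^G=H_i$; nothing beyond the fact that open normal subgroups form a base of neighbourhoods of $1$ is used.

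The step that actually needs justification is the one you pass over: "it yields $\pi_U(L_i)\le\bar H_iV$". What the lemma gives in $G/\tilde U$ is a decomposition $\langle\bar H_1,\bar H_2\rangle=\bar L_1\amalg_{\bar K}\bar L_2$ with $\bar L_i\le\bar H_iV$; to convert this into a statement about $\pi_U(L_i)$ you need $\pi_U(L_i)\le\bar L_i$, i.e., compatibility of the decomposition upstairs (from \cite[Proposition 4.4]{ZM90}) with the one downstairs under $\pi_U$. This holds, but only because in that construction the factors are the vertex stabilizers, $L_i=\langle H_1,H_2\rangle\cap G_i$ and $\bar L_i=\langle\bar H_1,\bar H_2\rangle\cap\bar G_i$, with $K$, $\bar K$ the corresponding edge stabilizers, exactly as in the proof of Lemma \ref{abstract amalgam}; with that identification $\pi_U(L_i)\le\bar G_i\cap\langle\bar H_1,\bar H_2\rangle=\bar L_i$ is immediate. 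As written, where $\bar L_i$ is merely some subgroup of $\bar G_i$ produced by the lemma, the containment $\pi_U(L_i)\le\bar H_iV$ does not follow from the quoted statements. Insert that identification (one sentence) and your argument is complete, though still considerably longer than the paper's, which never leaves $G$.
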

\begin{proof}  Since $U$ in the preceding lemma is arbitrary, the result follows.
\end{proof}

\begin{prop}\label{intersection} Let $G=\Pi_1(\G,\Gamma)$ be the fundamental pro-$p$ group of a finite tree  of pro-$p$ groups and $H_v\leq G(v)$ for $v\in V(\Gamma)$. Let $U$ be an open normal subgroup of $G$ and suppose  that  for each edge  $e$ one has $H_v\cap G(e)\leq U\geq H_w\cap G(e)$. Then $H=\langle H_v\mid v\in V\rangle=\Pi_1(\H, \Gamma)$ such that $H(v)\leq H_vU$ for all $v\in V(\gamma)$ and  $H(e)\leq U\cap \G(e)$ for all $e\in E(\Gamma)$.  
	
\end{prop}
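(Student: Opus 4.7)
The plan is to prove the proposition by induction on $|V(\Gamma)|$, following the structure of the abstract argument in Proposition \ref{general}. The base case $|V(\Gamma)|=1$ is trivial, since $\Gamma$ is then a single vertex $v$ with no edges and $H=H_v=\Pi_1(\H,\Gamma)$ with $\H(v)=H_v\leq H_v U$.

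For the inductive step, I would choose a pending vertex $w$ with unique incident edge $e$, let $v$ be the other endpoint of $e$, and set $\Delta:=\Gamma\setminus\{e,w\}$. By Lemma \ref{restricted graph}, $G_1:=\Pi_1(\G,\Delta)$ embeds in $G$, and the splitting $G=G_1\amalg_{\G(e)}\G(w)$ holds. Applying the inductive hypothesis to $(\G,\Delta)$ with the subgroups $\{H_u\}_{u\in V(\Delta)}$ and the open normal subgroup $U\cap G_1$ of $G_1$ yields $H_1:=\langle H_u\mid u\in V(\Delta)\rangle=\Pi_1(\H_1,\Delta)$, with $\H_1(u)\leq H_u U$ for $u\in V(\Delta)$ and $\H_1(e')\leq U\cap\G(e')$ for $e'\in E(\Delta)$.

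Next I would view $H=\langle H_1,H_w\rangle$ inside $G_1\amalg_{\G(e)}\G(w)$ and apply a pro-$p$ amalgam lemma to decompose $H$ as $L\amalg_K L_w$ with $L\leq H_1U$, $L_w\leq H_w U$, and $K\leq U\cap\G(e)$. The lemma preceding Corollary \ref{amalgam} handles the case of an amalgam of finite pro-$p$ groups; its extension to the general pro-$p$ setting proceeds by reducing modulo the open normal subgroup $\tilde U$ of Proposition \ref{mod tilde U} (after which $G/\tilde U$ becomes a graph of finite groups) and then taking an inverse limit over descending open normal subgroups. Applying the lemma requires the two conditions $H_w\cap\G(e)\leq U$ and $H_1\cap\G(e)\leq U$; the first is immediate from the hypothesis, while the second is obtained by identifying $H_1\cap\G(v)$ with the vertex group $\H_1(v)$ via the $H_1$-equivariant morphism of standard pro-$p$ trees induced by $(\H_1,\Delta)\hookrightarrow(\G,\Delta)$, and then combining the bound $\H_1(v)\leq H_v U$ with the hypothesis $H_v\cap\G(e)\leq U$ and the normality of $U$ in $G$.

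Splicing the amalgam decomposition of $H$ at $e$ with the graph-of-groups structure of $H_1$ over $\Delta$ then produces $(\H,\Gamma)$ with $\H(u)=\H_1(u)\leq H_u U$ for $u\in V(\Delta)\setminus\{v\}$, $\H(v)\leq H_v U$ coming from the $G_1$-side factor of the amalgam, $\H(w)=L_w\leq H_w U$, $\H(e)=K\leq U\cap\G(e)$, and $\H(e')=\H_1(e')\leq U\cap\G(e')$ for $e'\in E(\Delta)$, which is the required conclusion. The main technical obstacle is the combined verification of the intersection condition $H_1\cap\G(e)\leq U$ and the extension of the pro-$p$ amalgam lemma from the finite-amalgam case to the general one, both of which are handled by passage to the finite quotient $G/\tilde U$ of Proposition \ref{mod tilde U} followed by an inverse-limit argument.
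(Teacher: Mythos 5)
Your overall strategy---redoing the induction of Proposition \ref{general} directly in the pro-$p$ category, peeling off a pending vertex and invoking a pro-$p$ amalgam lemma at each step---is genuinely different from the paper's proof, which does not induct at all: it quotes \cite[Proposition 4.4]{ZM90} once to obtain $H=\Pi_1(\H,\Gamma)$ with $\H(m)\leq \G(m)$, and then gets the containments $\H(v)\leq H_vU$ and $\H(e)\leq U\cap \G(e)$ by applying the \emph{abstract} Proposition \ref{general} to the dense abstract fundamental group (via \ref{fund graph groups}) and using that $H_vU$ and $U\cap\G(e)$ are open, hence closed. The difference matters, because your route forces you to prove, as the inductive engine, the amalgam lemma for arbitrary pro-$p$ factors, and this is exactly where your sketch has a gap. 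Reducing modulo the subgroup $\tilde U$ of Proposition \ref{mod tilde U} does not preserve the lemma's hypothesis: the intersection $(H_1\tilde U/\tilde U)\cap(\G(e)\tilde U/\tilde U)$ can be strictly larger than the image of $H_1\cap\G(e)$, so the finite-group lemma cannot simply be applied in $G/\tilde U$; and even granting a splitting at each finite level, you give no argument that these decompositions form a compatible inverse system whose limit is a decomposition of $H$ over $\Gamma$. The paper's device of ``abstract statement plus density plus closedness of $H_vU$ and $U\cap\G(e)$'' is precisely what replaces this missing step.

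The verification of $H_1\cap\G(e)\leq U$ is also flawed on two counts. First, the inductive hypothesis only gives $\H_1(v)\leq H_vU$; it does not identify $\H_1(v)$ with $H_1\cap\G(v)$ (nor does \cite[Proposition 4.4]{ZM90}, which gives containment of the vertex groups in the $\G(m)$, not equality with the full stabilizers), so that identification needs an argument. Second, even granting $H_1\cap\G(e)\leq H_vU$, you cannot deduce $H_1\cap\G(e)\leq U$ from $H_v\cap\G(e)\leq U$ and normality of $U$: from $x=hu\in\G(e)$ with $h\in H_v$, $u\in U$ you only get $h\in H_v\cap\G(e)U$, about which the hypotheses say nothing. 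For instance, inside $\G(v)=A\times B$ with $\G(e)=A$, take $H_v$ the graph of an isomorphism $A\to B$ and suppose $U\cap\G(v)=B$; then $H_v\cap\G(e)=1\leq U$ while $H_vU\cap\G(e)\supseteq A\not\leq U$. Note that the proposition's hypotheses only control the intersections $H_v\cap\G(e)$ of the original vertex subgroups with edge groups, not intersections of the large generated subgroup $H_1$ with them; handling this is the real content of the statement, and it is what the paper's transfer from the abstract Proposition \ref{general} (where Lemma \ref{abstract amalgam} produces explicit generating sets with no intersection hypotheses) is designed to accomplish.
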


\begin{proof} By \cite[Proposition 4.4]{ZM90}, $\langle H_v\mid v\in V(\Gamma)\rangle=\Pi_1(\H,\Gamma)$ 
	with $H(m)\leq G(m)$. By Proposition \ref{general} comnbined with patagraph \ref{fund graph groups}, $H(v)\leq H_vU$ and $H(e)\leq U\cap G(e)$.
	\end{proof}
\begin{cor}\label{free product}  Suppose  $H_v\cap G(e)=1$ for all  $v\in V(\Gamma)$ and each $e\in E(\Gamma)$. Then $H=\coprod_{v\in V(\Gamma)} H_v$.
\end{cor}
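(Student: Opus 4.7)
\smallskip\noindent\textit{Proof plan.} The plan is to deduce this corollary directly from Proposition \ref{intersection} by varying the open normal subgroup $U$. First, I would note that the hypothesis $H_v\cap G(e)=1$ for all $v$ and $e$ trivially satisfies $H_v\cap G(e)\leq U$ for every open normal subgroup $U$ of $G$. So for any such $U$, Proposition \ref{intersection} yields a graph of pro-$p$ groups $(\mathcal H_U,\Gamma)$ with $H=\Pi_1(\mathcal H_U,\Gamma)$, $\mathcal H_U(v)\leq H_vU$, and $\mathcal H_U(e)\leq U\cap \G(e)$.

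Next I would take the intersection over all open normal subgroups $U$ of $G$. Since each $H_v$ is a closed subgroup of the pro-$p$ group $G$ and the groups $H_vU$ form a neighborhood basis of $H_v$ in the subspace $H$, one has $\bigcap_U H_vU=H_v$. Similarly, $\bigcap_U U\cap \G(e) = 1$. On the other hand, $H_v$ is clearly contained in whatever edge-and-vertex-group data the subgroup $H$ inherits (as $H_v\leq H$ and $H_v\leq \G(v)$), so the corresponding vertex group must equal $H_v$ and the corresponding edge group must be trivial. Hence $H=\Pi_1(\mathcal H,\Gamma)$ with $\mathcal H(v)=H_v$ and $\mathcal H(e)=1$.

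Finally, since $\Gamma$ is a finite tree and every edge group is trivial, the presentation \eqref{presentation} collapses: all stable letters vanish (as every edge lies in a maximal subtree $D=\Gamma$) and all amalgamation relations are trivial, so $\Pi_1(\mathcal H,\Gamma)=\coprod_{v\in V(\Gamma)} H_v$. This gives the claim.

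The argument is essentially formal once Proposition \ref{intersection} is available; the only subtle point is justifying the passage from ``for every open $U$'' to the intersection being $H_v$ respectively $1$, which uses that $H_v$ and $\{1\}$ are closed and that the sets $\{H_vU\}$ and $\{U\cap\G(e)\}$ descend to these as $U$ runs through a neighborhood basis of $1$. I do not foresee any real obstacle.
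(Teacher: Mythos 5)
Your argument is correct and essentially the paper's own proof: the paper likewise lets $U$ range over all open normal subgroups in Proposition \ref{intersection} to force $\H(v)=H_v$ and $\H(e)=1$, and then concludes the free pro-$p$ product decomposition (citing \cite[Example 6.2.3]{R 2017}, where you instead collapse the presentation \eqref{presentation} over the tree $\Gamma$, which is equally fine). The one point worth making explicit is that the decomposition $(\H,\Gamma)$ in Proposition \ref{intersection} comes from \cite[Proposition 4.4]{ZM90} and can be taken independent of $U$; only with this uniformity does intersecting the bounds $\H(v)\leq H_vU$ and $\H(e)\leq U\cap\G(e)$ over all $U$ (rather than having, for each fixed $U$, merely $H_v\leq \H(v)\leq H_vU$) yield the equalities you claim.
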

\begin{proof}  Since $U$ in Proposition \ref{intersection} is an arbitrary open normal subgroup, $H(v)=H_v$ and $\H(e)=1$ for each $e\in D$. Hence $H=\coprod_{v\in V(\Gamma)} H_v$ by \cite[Example 6.2.3]{R 2017}. 
\end{proof}
\begin{defn} We say that a profinite graph of pro-$p$ groups $(\G,\Gamma)$ is {\em $k$-acylindrical} if the action of the fundamental groups $\Pi_1(\G,\Gamma)$ on its standard pro-$p$ tree  is $k$-acylindrical. \end{defn}
\begin{prop}\label{distance} Let $G=\Pi_1(\G,\Gamma)$ be the fundamental pro-$p$ group of an acylindrical graph of pro-$p$ groups. Let $v,w$ be  vertices at distance $\geq 2k+1$. Then $\langle G(v), G(w)\rangle=G(v)\amalg G(w)$. 
\end{prop}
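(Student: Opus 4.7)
My plan is to restrict to the geodesic path from $v$ to $w$, split the resulting tree of pro-$p$ groups at its middle edge so that $\langle G(v),G(w)\rangle$ sits inside an amalgamated free pro-$p$ product of a useful form, and then invoke Corollary~\ref{amalgam}; the role of $k$-acylindricity is confined to verifying the two trivial-intersection hypotheses.

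More precisely, fix a geodesic $v=v_0,e_1,v_1,\ldots,e_n,v_n=w$ in $\Gamma$ of length $n\ge 2k+1$, let $\Delta\subseteq\Gamma$ be its underlying sub-path, and choose a maximal subtree $D$ of $\Gamma$ containing $\Delta$. Lemma~\ref{restricted graph} gives an injection $\Pi_1(\G,\Delta)\hookrightarrow G$, and clearly $\langle G(v),G(w)\rangle\le\Pi_1(\G,\Delta)$. Setting $e^*:=e_{k+1}$ and letting $\Delta_1,\Delta_2$ be the two sub-paths obtained by removing $e^*$ from $\Delta$, the presentation of paragraph~\ref{fund graph groups} realises $\Pi_1(\G,\Delta)$ as the proper amalgamated free pro-$p$ product
\[
\Pi_1(\G,\Delta)=\Pi_1(\G,\Delta_1)\amalg_{G(e^*)}\Pi_1(\G,\Delta_2),
\]
with $G(v)\le\Pi_1(\G,\Delta_1)$ and $G(w)\le\Pi_1(\G,\Delta_2)$. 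By Corollary~\ref{amalgam} it is enough to show that $G(v)\cap G(e^*)=1=G(w)\cap G(e^*)$ in $G$.

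For this, note that since $\Delta\subseteq D$ the coset representatives $t_{e_i}$ on $\Delta$ are trivial, so paragraph~\ref{standard} shows that the sequence $G(v_0),G(e_1),G(v_1),\ldots,G(v_n)$ lifts $\Delta$ to a path of length $n$ in the standard pro-$p$ tree $T$ of $G$; this lift is actually a geodesic, because any path in $T$ between its endpoints projects to a path in $\Gamma$ of at least the same length. Now any element of $G(v)\cap G(e^*)$ stabilises both the vertex $G(v_0)$ and the edge $G(e^*)=G(e_{k+1})$, hence fixes the two endpoints $G(v_k),G(v_{k+1})$ of that edge, and so fixes the geodesic from $G(v_0)$ to $G(v_{k+1})$, which has length $k+1>k$; by $k$-acylindricity this element must be trivial. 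The symmetric computation gives $G(w)\cap G(e^*)=1$, since the corresponding geodesic from $G(v_n)$ to $G(v_k)$ has length $n-k\ge k+1>k$. Corollary~\ref{amalgam} now yields $\langle G(v),G(w)\rangle=G(v)\amalg G(w)$.

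The one point requiring care is to make sure that $\Gamma$-distance is truly realised as $T$-distance between the chosen lifts, so that $k$-acylindricity applies to a geodesic of length exactly $k+1$; but this is automatic once $D$ is chosen to contain $\Delta$, by the projection argument above.
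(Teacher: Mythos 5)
Your proposal is correct and follows essentially the same route as the paper's proof: restrict the graph of groups to the geodesic $[v,w]$ via Lemma~\ref{restricted graph}, split the resulting line of groups at a middle edge $e$ whose segments to $v$ and to $w$ each exceed length $k$, use $k$-acylindricity to get $G(v)\cap G(e)=1=G(w)\cap G(e)$, and conclude by Corollary~\ref{amalgam}. Your extra verification that the $\Gamma$-geodesic lifts to a genuine geodesic in the standard tree (so acylindricity really applies) is a point the paper leaves implicit, but it is the same argument.
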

\begin{proof}
	Let $[v,w]$ be the shortest geodesic between $v$ and $w$. Let $G(v,w)$ be the fundamental group of the graph of pro-$p$ groups restricted to $[v,w]$. By Lemma \ref{restricted graph} $G(v,w)$ is a subgroup of $G$ generated by vertex stabilizers of $[v,w]$.  Let $e$ be an edge of $[v,w]$  at distance $>k$ from   $w$ and $v$. Then $G(v)\cap G(e)=1=G(w)\cap G(e)$. Note that $G(v,w)$ splits over $G(e)$ as a free amalgamated pro-$p$ product $G(v,w)=G_1\amalg_{G(e)} G_2$, where $G_1$, $G_2$ are  pro-$p$ groups generated by vertex groups of the connected components  of $[v,w]\setminus e$ (see Lemma \ref{restricted graph}), so that  $G(v)\leq G_1$ and $G(w)\leq G_2$. By Corollary   \ref{amalgam}, $\langle G(v),G(w)\rangle=G(v)\amalg G(w)$ as required.
\end{proof}

\begin{prop}\label{at most 2}  Suppose $\Gamma=[v,w]$ be a line of pro-$p$ groups such that $G=\Pi_1(\G,\Gamma)=G(v)\amalg G(w)$.  Let $(\G,\Gamma_{red})$ be a reduced graph of pro-$p$ groups obtained from $(\G,\Gamma)$ by the procedure described in Remark \ref{reduced-2}. If $\Gamma_{red}$ is not a vertex, then one of the following holds:
	\begin{enumerate}
		\item[(i)] $\Gamma_{red}$ has only two edges $e_1,e_2$ with pending vertices $v,w$ and one middle vertex $u$ such that $G(u)=G(e_1)\amalg G(e_2)$;	
		\item[(ii)] $\Gamma_{red}$ has only one edge, a trivial edge group and $G(v), G(w)$ as vertex groups.	
	\end{enumerate}
\end{prop}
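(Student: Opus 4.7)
My plan is to combine the pro-$p$ Kurosh subgroup theorem with a generator-counting argument for pro-$p$ amalgams, making essential use of the malnormality of free factors in a pro-$p$ free product, namely that $G(v)\cap G(w)^g=1$ for every $g\in G$.

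Since collapsing edges preserves the line structure, $\Gamma_{red}$ is again a line with vertices $u_0,\dots,u_n$, edges $e_1,\dots,e_n$, vertex groups $A_i$ and edge groups $B_i\lneq A_{i-1},A_i$. I first show that the pending vertices of $\Gamma_{red}$ are $v,w$ themselves with vertex groups $A_0=G(v)$ and $A_n=G(w)$: were the edge incident to $v$ in $\Gamma$ collapsible at $v$, then $G(v)$ would be properly contained in a larger pending vertex group of $\Gamma_{red}$, contradicting the uniqueness of the Grushko decomposition of $G=G(v)\amalg G(w)$. Hence $B_1\leq G(v)$ and $B_n\leq G(w)$ as subgroups of $G$. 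Counting minimal generators via the abelianization of the pro-$p$ tree amalgam yields
$$d(G)\;\geq\;d(G(v))+\sum_{i=1}^{n-1}d(A_i)+d(G(w))-\sum_{i=1}^{n}d(B_i),$$
so using $d(G)=d(G(v))+d(G(w))$ I obtain $\sum_{i=1}^{n-1}d(A_i)\leq\sum_{i=1}^{n}d(B_i)$.

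For $n=1$, the pro-$p$ Bass-Serre intersection formula together with malnormality gives $B_1=G(v)\cap G(w)=1$, which is case (ii). For $n=2$, since $B_1\leq G(v)$ and $B_2\leq G(w)$, the subgroup $\langle B_1,B_2\rangle\leq A_1$ equals the free pro-$p$ product $B_1\amalg B_2$ (subgroups of the two free factors of $G$ generate a free pro-$p$ product). The pro-$p$ Kurosh subgroup theorem applied to $A_1\leq G$ exhibits $B_1$ and $B_2$ as Kurosh factors of $A_1$, so $d(A_1)\geq d(B_1)+d(B_2)+\sum d(\text{extra factors})$; combined with $d(A_1)\leq d(B_1)+d(B_2)$ from the key inequality, all extra factors are trivial, hence $A_1=B_1\amalg B_2$, giving case (i).

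To rule out $n\geq 3$, I collapse $[u_1,u_n]$ to a single vertex via Remark~\ref{collapsed graph of groups}, producing the one-edge splitting $G=G(v)\amalg_{B_1}K$ with $K=\Pi_1(\G,[u_1,u_n])$. The same Kurosh-plus-counting argument applied to $K\leq G$ (using $K\cap G(v)=B_1$ and $G(w)\leq K$) forces $K=G(w)\amalg B_1$. If $B_1=1$ then $K=G(w)$, but no reduced multi-edge graph of pro-$p$ groups can have its fundamental group equal to a single vertex group (as seen from the action on the standard pro-$p$ tree), contradicting $K$ being the fundamental group of the reduced $(n-1)$-edge subline $[u_1,u_n]$. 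If $B_1\neq 1$, then a careful analysis of the pending vertex group $A_1\supseteq B_1$ of the subline as a subgroup of $K=G(w)\amalg B_1$—using malnormality of free factors to constrain the Kurosh decomposition of $A_1$ together with the generator-counting inequality applied to the subline—forces $A_1=B_1$, contradicting reducedness $B_1\lneq A_1$. The main subtlety lies precisely in this last step, namely ruling out extra Kurosh factors in the vertex groups via the malnormality of free factors in pro-$p$ free products.
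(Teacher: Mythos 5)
Your plan has genuine gaps, and they sit exactly at the load-bearing points. First, you invoke the pro-$p$ Kurosh subgroup theorem for the closed subgroups $A_1\leq G$ and $K\leq G$. In the pro-$p$ category the Kurosh theorem is only available for \emph{open} subgroups; for arbitrary closed subgroups of a free pro-$p$ product it fails, and what one has instead (e.g.\ \cite[Proposition 4.4]{ZM90}, which the paper uses) is only a decomposition as the fundamental group of a graph of groups with vertex groups inside conjugates of the factors, not a genuine free-product decomposition whose factors you can count. Second, your generator count $d(G)\geq\sum_v d(\G(v))-\sum_e d(\G(e))$ (and the identity $d(G)=d(G(v))+d(G(w))$) presupposes that $G$ and all vertex and edge groups are finitely generated; the proposition makes no such hypothesis, and nothing forces the vertex groups of the line to be finitely generated even when $G$ is. Third, the claim that the pending vertex groups of $\Gamma_{red}$ are exactly $G(v)$ and $G(w)$ is justified by ``uniqueness of the Grushko decomposition'', but no such uniqueness statement is available for pro-$p$ groups, and even abstractly it would not apply since $G(v)$, $G(w)$ need not be freely indecomposable. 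Finally, the case $n\geq 3$, which is the real content of the proposition, is not proved: the step ``a careful analysis \dots forces $A_1=B_1$'' is precisely the inductive heart of the argument and is left as an assertion; moreover your counting cannot deliver it, because $d$ is not monotone under inclusion of pro-$p$ groups, so a Kurosh-type factor strictly containing $B_1$ may well have smaller minimal number of generators, and no contradiction with the inequality $\sum d(A_i)\leq\sum d(B_i)$ arises.

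For contrast, the paper avoids all of these issues by first passing to the finite quotients $G_U=G_U(v)\amalg G_U(w)$ of Proposition \ref{mod tilde U} (so that all vertex and edge groups become finite $p$-groups and no finite generation of $G$ is needed), arguing by induction on $|G_U(v)|+|G_U(w)|$, applying the Kurosh subgroup theorem to the subgroup $\Pi_1(\G_U,[v_1,w_1])$ of the free pro-$p$ product of two finite $p$-groups to force $L=1$ and to identify the pending groups of the inner line with $G_U(e_1)$, $G_U(e_2)$ (whence $v_1=w_1$ by reducedness), and finally recovering $G(u)=G(e_1)\amalg G(e_2)$ by taking the inverse limit over $U$. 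If you want to salvage your approach, you would need to run your counting and Kurosh arguments at the level of these finite quotients rather than for closed subgroups of $G$ itself, which essentially reproduces the paper's proof.
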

\begin{proof}   Let $U$ be an open normal subgroup of $G$ and $G_U(v)=G(v)U/U, G_U(w)=G(w)U/U$. Let $U(v,w)=(\langle U\cap G(v), U\cap G(w)\rangle)^G$ and $G_U=G/U(v,w)=G_U(v)\amalg G_U(w)$ (cf. Proposition \ref{mod tilde U}).  Then $G=\varprojlim_U G_U$ where $G_U=\Pi_1(\G_U,\Gamma_{red})$ and $\G_U=\bigcup G(m)U(v,w)/U(v,w)$ for every $m\in \Gamma$. Starting with some $U$, the graph of groups $(\G_U,\Gamma_{red})$ is reduced, and w.l.o.g. we may assume that it is reduced for every $U$.
	
	Suppose $\Gamma_{red}$ has one edge. Then $G_U=G_U(v)\amalg_{G_U(e)} G_U(w)$. It follows that $G_U(e)=1$ for each $U$ and therefore so is $G(e)$. 
	
	Suppose now $\Gamma_{red}$ has more than one edge; we shall use induction on the sum  $|G_U(v)|+|G_U(w)|$ of the orders of the free factors of $G_U=G_U(v)\amalg G_U(w)$ to show that $\Gamma_{red}$ satisfies (i) or (ii). 
	Let $e_1,e_2$ be edges of $\Gamma$ incident to $v$ and $w$ respectively, and $v_1$, $w_1$ the other vertices of $e_1$ and $e_2$.  By the pro-$p$ version of the Kurosh Subgroup Theorem, one has 
	\begin{eqnarray}\nonumber
		\Pi_1(\G_U,[v_1,w_1])&=&(G_U(v)\cap \Pi_1(\G_U,[v_1,w_1]))\amalg (\Pi_1(\G_U,[v_1,w_1]))\cap G_U(w))\amalg L\\ \nonumber
		&=& G_U(e_1)\amalg G_U(e_2)\amalg L
	\end{eqnarray}
	and so $G_U=G_U(v)\amalg L\amalg G_U(w)$. Hence $L=1$ and $\Pi_1(\G_U,[v_1,w_1])= G_U(e_1)\amalg G_U(e_2)$. If $v_1=w_1$ then we are in case (i). Suppose $v_1\neq w_1$. By induction hypothesis, $(\G_U,[v_1,w_1])$ satisfy (i) or (ii) and so in either case    $G_U(v_1)=G_U(e_1)$ and $G_U(w_1)=G_U(e_2)$. Hence edges $e_1$ and $e_2$ are fictitious, a contradiction. Hence $v_1=w_1$. 
	Thus putting $u=v_1=w_1$ we have $G_U(u)=G_U(e_1)\amalg G_U(e_2)$ and so $G(u)=G(e_1)\amalg G(e_2)$.
\end{proof}
\begin{prop}\label{free pro-p product} Let $G=\Pi_1(\G,\Gamma)$ be the fundamental pro-$p$ group of a $k$-acylindrical  finite tree of pro-$p$ groups. Suppose there exists a subset $V\subset V(\Gamma)$ such that 
	\begin{enumerate}
		\item[(i)] $[v,w]$ has at least $2k+1$ edges whenever $v\neq w\in V$; 
		\item[(ii)] $G=\langle G(v)\mid v\in V\rangle$. 
	\end{enumerate}
	Then $G=\coprod_{v\in V} G(v)$.
\end{prop}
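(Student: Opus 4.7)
The plan is to reduce the statement to an application of Corollary \ref{free product} after a suitable collapse of $(\G,\Gamma)$. By Lemma \ref{restricted graph} combined with hypothesis (ii), I may first replace $\Gamma$ by the minimal subtree $\Delta_V$ containing $V$ and work inside $G=\Pi_1(\G,\Delta_V)$. For each $v\in V$, let $B(v,k)$ denote the subtree of $\Delta_V$ induced by the vertices at distance at most $k$ from $v$; hypothesis (i) makes these subtrees pairwise disjoint. Using Remark \ref{collapsed graph of groups} I collapse each $B(v,k)$ to a single vertex $\hat v$, obtaining a new finite tree of pro-$p$ groups $(\hat\G,\hat\Gamma)$ with $\hat\G(\hat v)=\Pi_1(\G,B(v,k))\supseteq G(v)$, all other vertex- and edge-groups unchanged, and $\Pi_1(\hat\G,\hat\Gamma)=G$.

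To $(\hat\G,\hat\Gamma)$ I then apply Corollary \ref{free product} with the subgroup choice $H_{\hat v}=G(v)$ for $v\in V$ and $H_u=1$ for every uncollapsed vertex $u$. Its conclusion is $\langle G(v)\mid v\in V\rangle=\coprod_{v\in V}G(v)$, which together with hypothesis (ii) gives the desired equality $G=\coprod_{v\in V}G(v)$.

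It remains to verify the hypothesis $H_{\hat v}\cap\hat\G(\hat e)=1$ for every edge $\hat e$ of $\hat\Gamma$ incident to a vertex $\hat v$. The case $H_u=1$ is vacuous, so suppose $\hat v$ corresponds to some $v\in V$. Then $\hat e$ comes from an edge $e$ of $\Delta_V$ whose closer endpoint lies in $B(v,k)$ at distance exactly $k$ from $v$, while the far endpoint $u_{\rm far}$ lies at distance $k+1$ from $v$. Choose a maximal subtree of $\Delta_V$ containing the geodesic $[v,u_{\rm far}]$; then this geodesic lifts to a geodesic of length $k+1$ in the standard pro-$p$ tree $T$ of $(\G,\Delta_V)$, joining the identity lift of $v$ to the corresponding lift of $u_{\rm far}$. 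Any element of $G(v)\cap G(e)$ fixes both the lift of $v$ and the lift of $e$; since fixing an edge forces both of its endpoints to be fixed and the fixed-point set of a subgroup is a pro-$p$ subtree of $T$, the entire lifted geodesic of length $k+1$ is pointwise fixed. As $k+1>k$, the $k$-acylindricity of the action of $G$ on $T$ forces $G(v)\cap G(e)=1$.

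The delicate point is precisely this acylindricity argument: the only vertex and edge of $\Gamma$ visibly fixed by an element of $G(v)\cap G(e)$ are $v$ and $e$ themselves, and the extra length needed for $k$-acylindricity to bite must be extracted by identifying $G(e)$ as the stabilizer of both endpoints of the lifted edge and then invoking the subtree property of fixed-point sets in the pro-$p$ tree.
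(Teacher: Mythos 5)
Your argument is essentially the paper's own proof: the paper likewise collapses the ball of radius $k$ around each $v\in V$ (Remark \ref{collapsed graph of groups}), sets $H_v=G(v)$ for $v\in V$ and $H_v=1$ elsewhere, and applies Corollary \ref{free product}, with $k$-acylindricity supplying the trivial-intersection hypothesis. Your preliminary restriction to the minimal subtree $\Delta_V$ and your explicit lifting argument for $G(v)\cap\G(e)=1$ are correct elaborations of steps the paper treats implicitly, so the proposal is sound and follows the same route.
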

\begin{proof}  For every $v\in $V, we collapse the ball of radius $k$ centered at $v$ to the vertex $v$ itself and we consider the collapsed graph of groups $\Gamma'$ obtained in this way from Remark \ref{collapsed graph of groups}. Setting $H_v=G_v$ for $v\in V$ and $H_v=1$ for $v\notin V$, we achieve premises of Corollary \ref{free product}, since the action is $k$-acylindrical, deducing from it the result.
\end{proof}
\begin{cor}\label{acylindrical line}  Let $G=\Pi_1(\G,\Gamma)$ be the fundamental pro-$p$ group of a reduced $k$-acylindrical  finite line of pro-$p$ groups ($k>0)$. Let $V$ be the minimal subset of $V(\Gamma)$ such that $G=\langle G(v)\mid v\in V\rangle$. If $G$ is  finitely generated then $|E(\Gamma)|\leq 2k|V|$. 
\end{cor}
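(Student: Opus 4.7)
The plan is to show that consecutive vertices of $V$ along the line are at distance at most $2k$; the bound then follows by summation. By Lemma \ref{bounding border}, $V$ contains both pending vertices of $\Gamma$, so we may order $V$ along $\Gamma$ as $V=\{u_1<u_2<\ldots<u_m\}$ with $u_1=v_0$ and $u_m=v_n$. Setting $d_j:=d(u_j,u_{j+1})$ for $j=1,\ldots,m-1$ gives $|E(\Gamma)|=\sum_{j} d_j$, so it suffices to prove $d_j\leq 2k$ for every $j$, since then $|E(\Gamma)|\leq 2k(m-1)\leq 2k|V|$.

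Suppose toward a contradiction that $d_j\geq 2k+1$ for some $j$. Consider the sub-line $\Gamma_j:=[u_j,u_{j+1}]$ equipped with the restricted graph of pro-$p$ groups $(\G,\Gamma_j)$, which inherits both reducedness and $k$-acylindricity from $(\G,\Gamma)$. By Lemma \ref{restricted graph} its fundamental pro-$p$ group $L_j:=\Pi_1(\G,\Gamma_j)$ embeds into $G$, and by Proposition \ref{distance}, since $d_j\geq 2k+1$, the subgroup $\langle G(u_j),G(u_{j+1})\rangle$ of $L_j\leq G$ is the free pro-$p$ product $G(u_j)\amalg G(u_{j+1})$. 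The decisive step is to upgrade this to the equality $L_j=\langle G(u_j),G(u_{j+1})\rangle$, i.e.\ to show that every intermediate vertex group $G(v_i)$ with $v_i\in(u_j,u_{j+1})$ already lies in $\langle G(u_j),G(u_{j+1})\rangle$. Once this is in hand, Proposition \ref{at most 2} applied to the already-reduced line $(\G,\Gamma_j)$ forces $\Gamma_j$ to have at most two edges, so $d_j\leq 2<2k+1$ (using $k\geq 1$), contradicting the assumption.

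The main obstacle is thus the rigorous justification that $L_j=\langle G(u_j),G(u_{j+1})\rangle$ from the minimality of $V$. Informally, if some intermediate $G(v_i)$ contributed new elements to $L_j$, one ought to be able to exchange $u_j$ or $u_{j+1}$ in $V$ for $v_i$ and prune to get a strictly smaller generating subset of $G$, contradicting the minimal choice of $V$. Making this precise presumably requires a Kurosh-style analysis of the subgroup $L_j$ inside $G=\langle G(u_l)\mid u_l\in V\rangle$, in the spirit of the proof of Proposition \ref{at most 2}, combined with the observation that any non-absorbed intermediate vertex group yields a proper re-choice of generators; I expect this to be the technical heart of the argument.
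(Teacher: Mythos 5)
Your overall skeleton is the right one (and matches the paper's): reduce to showing consecutive vertices of $V$ are at distance at most $2k$, use Proposition \ref{distance} to get a free pro-$p$ product of the two end groups, and then invoke Proposition \ref{at most 2} to force at most two edges, contradicting $2k+1\geq 3$. But the step you yourself flag as the ``decisive step'' --- that $L_j=\Pi_1(\G,[u_j,u_{j+1}])$ equals $\langle G(u_j),G(u_{j+1})\rangle$ --- is a genuine gap, and it is not the statement one should be aiming at. Minimality of $V$ says only that $G$ is generated by \emph{all} the groups $G(u)$, $u\in V$, with no smaller such subset; it gives no control over whether an intermediate vertex group $G(v_i)$, $u_j<v_i<u_{j+1}$, lies in the subgroup generated by just its two neighbouring $V$-groups --- a priori $G(v_i)$ only lies in $G$, i.e.\ it may need generators coming from $V$-vertices far away on the line. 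Your proposed exchange argument also does not engage minimality: swapping $u_j$ or $u_{j+1}$ for $v_i$ does not produce a smaller generating subset, so no contradiction with the minimal choice of $V$ arises. As stated, the claim $L_j=\langle G(u_j),G(u_{j+1})\rangle$ is unproved and there is no evident route to it from the hypotheses.

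The paper avoids this entirely by a collapsing trick (Remark \ref{collapsed graph of groups}): with $v=u_j$, $w=u_{j+1}$ neighbouring in $V$, collapse the two connected components $C_v$, $C_w$ of $\Gamma\setminus\, ]v,w[$ into the endpoints, replacing the end vertex groups by $\Pi_1(\G,C_v)$ and $\Pi_1(\G,C_w)$. Since no element of $V$ lies strictly between $v$ and $w$, every generator $G(u)$, $u\in V$, is contained in one of these two enlarged end groups, so $G=\langle \Pi_1(\G,C_v),\Pi_1(\G,C_w)\rangle$ is immediate --- this is exactly the generation statement you were missing, obtained for the \emph{whole} group $G$ rather than for the subgroup $L_j$. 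The collapsed line is still reduced and the two end vertices are still at distance $\geq 2k+1$, so Proposition \ref{distance} gives $G=\Pi_1(\G,C_v)\amalg\Pi_1(\G,C_w)$, and Proposition \ref{at most 2} applied to the collapsed line yields the contradiction. So the fix is not a Kurosh-style analysis inside $L_j$, but enlarging the two end vertex groups by collapsing so that the global hypothesis $G=\langle G(v)\mid v\in V\rangle$ becomes the hypothesis of Proposition \ref{at most 2}.
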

\begin{proof}   We just need to show that the distance between two neighboring vertices of $V$ is at most $2k$. Suppose on the contrary $v,w$ are neighboring vertices of $V$ such that $[v,w]$ has at least $2k+1$ edges.    Collapsing connected components $C_v$ and $C_w$ of $\Gamma\setminus ]v,w[$ and considering  the collapsed graph of pro-$p$ groups (see Remark \ref{collapsed graph of groups}) instead of $(\G,\Gamma)$ we may assume that $\Gamma=[v,w]$. By Proposition \ref{distance}, $G=G(v)\amalg G(w)$.  But	then Proposition \ref{at most 2} forces  $[v,w]$ to have at most two edges, a contradiction.  
\end{proof}
\begin{cor}\label{free splitting} Let $G=\Pi_1(\G,\Gamma)$ be the fundamental group of a proper finite $k$-acylindrical  tree of pro-$p$ groups. Let $V$ be the minimal subset of $V(\Gamma)$ such that $G=\langle G(v)\mid v\in V\rangle$.  Suppose there exists a vertex  $v\in V$ such that the distance $l(v,w)$ is at least $2k+1$ for every $w\in  V$. Then $G$ splits as a free pro-$p$ product.
\end{cor}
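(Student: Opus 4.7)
The plan is to reduce the statement to Corollary \ref{free product} by a suitable two-stage collapse of $\Gamma$. First, I would collapse the ball $B_k(v)$ of radius $k$ around $v$ to a single vertex $v'$ with group $G(v'):=\Pi_1(\G, B_k(v))$, using Remark \ref{collapsed graph of groups}. The hypothesis $l(v,w)\geq 2k+1$ for every $w\in V\setminus\{v\}$ forces $V\cap B_k(v)=\{v\}$, so all other elements of $V$ lie outside $B_k(v)$. Let $D_1,\dots,D_m$ be the connected components of $\Gamma\setminus B_k(v)$; each $D_j$ is attached to $B_k(v)$ by a unique boundary edge $e_j$ running from $u_j\in B_k(v)$ (at distance $k$ from $v$) to $u_j'\in D_j$ (at distance $k+1$). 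Collapsing each $D_j$ further to a vertex $d_j$ with group $G(d_j):=\Pi_1(\G, D_j)$ yields a star-shaped graph of groups $(\G^*,\Gamma^*)$ with central vertex $v'$, leaves $d_1,\dots,d_m$, connecting edges $e_1,\dots,e_m$, and fundamental group still equal to $G$.

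I would then set $H_{v'}:=G(v)\leq G(v')$ and, for each $j$, $H_{d_j}:=\langle G(w)\mid w\in V\cap D_j\rangle\leq G(d_j)$; the generation hypothesis on $V$ gives $\langle H_{v'},H_{d_j}\mid j=1,\dots,m\rangle=G$. My goal is then to apply Corollary \ref{free product} to the family $\{H_{v'},H_{d_j}\}$ and conclude $G=G(v)\amalg\coprod_j H_{d_j}$, which is a non-trivial free pro-$p$ product as soon as $|V|\geq 2$ (each $G(w)$ is non-trivial by minimality of $V$). For the premises of Corollary \ref{free product}, $k$-acylindricity does most of the work: at $v'$, the geodesic from $v$ to $u_j'$ in $\tilde\Gamma$ has length $k+1>k$ so its stabiliser vanishes, forcing $G(v)\cap G(e_j)=1$; at $d_j$, every $w\in V\cap D_j$ satisfies $l(w,u_j')\geq l(v,w)-(k+1)\geq k$, so the geodesic from $w$ to $u_j$ passing through $e_j$ has length $\geq k+1$, giving $G(w)\cap G(e_j)=1$ for every generator of $H_{d_j}$.

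The crux and main obstacle is bridging from these per-generator vanishings to the subgroup equality $H_{d_j}\cap G(e_j)=1$ that Corollary \ref{free product} actually requires, since the subgroup generated by several $G(w)$ need not automatically meet $G(e_j)$ trivially in the pro-$p$ setting. I would handle this by applying Proposition \ref{intersection} inside each $D_j$: collapse the $k$-neighbourhoods of the various $w\in V\cap D_j$ inside $D_j$ (merging overlapping balls into larger connected subtrees) to obtain a further collapsed graph of pro-$p$ groups for $G(d_j)=\Pi_1(\G,D_j)$, and use that the $G$-action restricts to a $k$-acylindrical action of $G(d_j)$ on the corresponding universal cover; each merged collapsed vertex then still meets $G(e_j)$ trivially by the same distance estimate $l(w,u_j')\geq k$. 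Letting the open normal subgroup $U$ in Proposition \ref{intersection} run over a neighbourhood base of $1$ in $G(d_j)$ then forces $H_{d_j}\cap G(e_j)=1$, completing the hypotheses of Corollary \ref{free product} and yielding the free pro-$p$ splitting of $G$.
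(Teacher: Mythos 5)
Your reduction to Corollary \ref{free product} requires the subgroup equality $H_{d_j}\cap G(e_j)=1$, and this is exactly where the argument breaks: the per-generator facts $G(v)\cap G(e_j)=1$ and $G(w)\cap G(e_j)=1$ are correct, but the bridging claim is not only unproved by your repair, it is false in general under the hypotheses of the corollary. First, $l(v,w)\geq 2k+1$ only gives $l(w,u_j')\geq k$, so the $k$-ball around $w$ inside $D_j$ may reach $u_j'$ itself; the corresponding collapsed vertex group is then $\Pi_1(\G,\Lambda)$ for a subtree $\Lambda$ containing $u_j'$, hence contains $\partial(G(e_j))$, so the assertion that ``each merged collapsed vertex still meets $G(e_j)$ trivially'' fails. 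Second, even where the balls stay away from $u_j'$, that assertion concerns a subgroup topologically generated by several vertex groups (namely $\Pi_1(\G,\Lambda)$) meeting an edge group trivially, i.e. precisely the kind of statement you set out to prove, so the recursion is circular. Third, Proposition \ref{intersection} only bounds intersections of $\langle H_v\rangle$ with the edge groups of the tree actually being decomposed; after your collapse inside $D_j$ the edge $e_j$ is not an edge of that tree, so letting $U$ shrink gives no information about $H_{d_j}\cap G(e_j)$.

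In fact the needed condition can genuinely fail. Take $k=1$ and let $\Gamma$ be the tree $v-u-u'$ with two further leaves $w_1,w_2$ attached to $u'$. Let $G(u')$ be free pro-$p$ on $\{a,b\}$ with the edges $(u',w_1),(u',w_2)$ carrying $\langle a\rangle,\langle b\rangle$ and the edge $e=(u,u')$ carrying $\langle ab\rangle$; let $G(u)$ be free pro-$p$ on $\{ab,d\}$ with $(v,u)$ carrying $\langle d\rangle$; let $G(v)=\langle d\rangle\amalg\langle x\rangle$ and $G(w_i)$ the free pro-$p$ product of the incident edge group with a procyclic group $\langle y_i\rangle$. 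One checks this tree of groups is proper, reduced and $1$-acylindrical (at every vertex of the standard tree, stabilizers of distinct incident edges intersect trivially), that $V=\{v,w_1,w_2\}$ is a minimal generating set of vertices, and that $l(v,w_i)=3=2k+1$; yet $\langle G(w_1),G(w_2)\rangle$ contains $\langle a,b\rangle=G(u')\supseteq G(e)$, so $H_{d_1}\cap G(e)=G(e)\neq 1$. (Here the corollary's conclusion still holds, as $G$ is even free pro-$p$, but it cannot be reached through your decomposition $G=G(v)\amalg\coprod_j H_{d_j}$.) The paper sidesteps this issue: it groups the vertices of $V\setminus\{v\}$ by the connected components $C_i$ of $\Gamma\setminus B(v,2k)$, collapses the span of $V\cap C_i$ in each component as in Remark \ref{collapsed graph of groups}, and applies Proposition \ref{free pro-p product} to $v$ together with the collapsed vertices, so that the prospective free factors are fundamental groups of subgraphs of groups (honest vertex groups of a new decomposition) rather than subgroups topologically generated by distant vertex groups, and no intersection claim of the above type is ever needed.
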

\begin{proof}  Divide $V$ as the disjoint union $\{v\}\cup \bigcup_{i=1}^lV_i$ where the sets $V_i$ are defined as follows: $V_i=V\cap C_i$ where $C_i$ is a connected component of $\Gamma\setminus B(v, 2k)$, where $B(v,2k)$ is the  ball  of radius $2k$ with the center in $v$.  Denote by $\Delta_i$ the span of $V_i$ and let $G_i=\Pi_1(G,\Delta_i)$ be the fundamental group of a graph of groups restricted to $\Delta_i$. 
	Using Remark \ref{collapsed graph of groups} we can collapse all  $\Delta_i$. The obtained graph of groups satisfies premises of Proposition \ref{free pro-p product} and by hypothesis possesses more then one vertex. Hence,  by Proposition \ref{free pro-p product},  it is a non-trivial free pro-$p$ product.	
\end{proof}
\begin{thm}\label{k-acylindrical accessibility}  Let $G=\Pi_1(\G, \Gamma)$ be the fundamental group of a finite reduced $k$-acylindrical graph of pro-$p$ groups.  Then $|E(\Gamma)|\leq d(G)(4k+1)-1 $ and $|V(\Gamma)|\leq 4kd(G)$. 
\end{thm}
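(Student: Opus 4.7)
The plan is to reduce to the case where $\Gamma$ is a tree and then bound $|V(D)|\leq 4kd(G)$ via a Steiner-type decomposition combined with Propositions~\ref{distance}, \ref{at most 2}, and Corollary~\ref{acylindrical line}. I assume $G$ is finitely generated and $k\geq 1$, since otherwise the bounds are vacuous.

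First, I reduce to a tree. Fix a maximal subtree $D\subseteq\Gamma$; writing $G$ as iterated HNN-extensions over the edges outside $D$ with stable letters $t_e$, the epimorphism $G\twoheadrightarrow\F_p[E(\Gamma)\setminus D]$ sending $\Pi_1(\G,D)$ to $0$ and each $t_e$ to the basis element $e$ immediately yields $|E(\Gamma)\setminus D|\leq d(G)$. It then suffices to prove $|V(D)|\leq 4kd(G)$, since then $|E(\Gamma)|=|E(D)|+|E(\Gamma)\setminus D|\leq (|V(D)|-1)+d(G)\leq d(G)(4k+1)-1$.

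For the tree case, pick a minimal $V\subseteq V(D)$ with $G=\langle\G(v)\mid v\in V\rangle$; by Lemma~\ref{bounding border}, $|V|\leq d(G)$ and $V$ contains all pending vertices of $D$, so the number $L$ of leaves of $D$ satisfies $L\leq d(G)$. Set $V^*=V\cup B$ where $B$ is the set of branching (degree $\geq 3$) vertices of $D$; the standard tree identity $|B|\leq L-2$ (the trivial case $L=1$ being a single vertex) gives $|V^*|\leq 2d(G)-2$. For each pair $v_1,v_2\in V^*$ that is $V^*$-adjacent (no other element of $V^*$ on the open geodesic $]v_1,v_2[$), the internal vertices of the segment $[v_1,v_2]$ all have degree $2$ in $D$ and lie outside $V$, so carry no branches. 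Collapsing the two components $S_{v_1},S_{v_2}$ of $D\setminus]v_1,v_2[$ containing $v_1,v_2$ (Remark~\ref{collapsed graph of groups}) produces a reduced line of pro-$p$ groups over $[v_1,v_2]$ with enlarged endpoint vertex groups $\widetilde\G(v_i)=\Pi_1(\G,S_{v_i})$ and the same fundamental group $G$. Since every element of $V$ lies in $S_{v_1}\cup S_{v_2}$, one has $G=\langle\widetilde\G(v_1),\widetilde\G(v_2)\rangle$. If $l(v_1,v_2)\geq 2k+1$, Proposition~\ref{distance} forces $G=\widetilde\G(v_1)\amalg\widetilde\G(v_2)$, and Proposition~\ref{at most 2} applied to the (already reduced) collapsed line then gives $l(v_1,v_2)\leq 2$, contradicting $k\geq 1$. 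Hence $l(v_1,v_2)\leq 2k$, and summing over the $|V^*|-1$ segments of $D$ gives $|E(D)|\leq 2k(|V^*|-1)\leq 2k(2d(G)-3)$, whence $|V(D)|\leq 4kd(G)-6k+1\leq 4kd(G)$.

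The main technical obstacle is verifying that $k$-acylindricity descends to the $G$-action on the collapsed line's standard pro-$p$ tree $\widetilde T$. Concretely, $\widetilde T$ arises from the original standard tree $T=T(G)$ by equivariantly collapsing the $G$-translates of the sub-standard-trees lying over $S_{v_1},S_{v_2}$; an element that fixes a geodesic of length $>k$ in $\widetilde T$ pointwise must also fix the attachment vertex of every super-vertex it passes through (forced by the incident edges, which are honest edges of $T$), so it pointwise fixes a geodesic of length $>k$ in $T$ and is therefore trivial by the $k$-acylindricity of the original action.
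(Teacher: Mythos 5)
Your tree-case argument (the Steiner-type decomposition of $D$ into at most $|V^*|-1\leq 2d(G)-3$ segments, each of length at most $2k$ by collapsing to a line and invoking Propositions \ref{distance} and \ref{at most 2}) is sound in outline and is close in spirit to what the paper does via Corollary \ref{acylindrical line}; your remark on why $k$-acylindricity survives the collapse is also a reasonable justification of a point the paper leaves implicit. The genuine gap is the claimed reduction to the tree case. After bounding $|E(\Gamma)\setminus E(D)|\leq d(G)$ you write ``pick a minimal $V\subseteq V(D)$ with $G=\langle \G(v)\mid v\in V\rangle$'', but such a $V$ does not exist unless $\Gamma=D$: modding out the normal closure of all vertex groups leaves the free pro-$p$ group $\pi_1(\Gamma)$ of rank $|E(\Gamma)\setminus E(D)|>0$, so no collection of vertex groups of $D$ can generate $G$. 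The obvious repair --- run your argument for $H=\Pi_1(\G,D)$ instead of $G$ --- destroys the bound: Lemma \ref{bounding border} then gives $|V|\leq d(H)$, and $d(H)$ is not controlled by $d(G)$ (it need not even be finite without extra hypotheses; compare Proposition \ref{reducing to trees} and Theorem \ref{bound}, where exactly this route forces the additional assumption $d(\G(e))\leq n$ on the non-tree edge groups and yields only the weaker bound $(2kn+1)d(G)$). So as written your proof establishes the theorem only when $\Gamma$ is a tree.

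The missing idea, which is the actual content of the paper's proof, is how to produce a \emph{bounded} set of vertices of $D$ whose groups generate $\Pi_1(\G,D)$: the paper takes $V$ minimal with $G=\langle G(v),t_e\mid v\in V,\ e\in\Gamma\setminus D\rangle$ (so $|V|\leq d(G)$ by a Frattini argument), adjoins the set $V'$ of endpoints of the edges outside $D$, and argues --- using that otherwise one could factor out these vertex groups and obtain a forbidden free factor $\pi_1(\Gamma)\amalg L$ --- that $\Pi_1(\G,D)=\langle G(v)\mid v\in V\cup V'\rangle$ with $|V\cup V'|\leq 2d(G)$; only then does the distance/collapsing argument inside $D$ apply and give $|V(D)|\leq 4kd(G)$. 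You need an argument of this kind (or some other device bounding the generation of $\Pi_1(\G,D)$ by vertex groups in terms of $d(G)$ alone) before your segment-counting can be carried out; without it the step from the general graph to the tree $D$ fails.
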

\begin{proof} Let $D$ be a maximal subtree of $\Gamma$. By \cite[Lemma~3.6]{CZ},  there are at most $d(G)$ edges in 
	$\Gamma\setminus D$.   Let $V$ be a minimal subset of 
	$V(\Gamma)$ such that $G=\langle G(v),t_e\mid v\in V, e\in \Gamma\setminus D\rangle$. Looking at $G/\Phi(G)$ one easily deduces that  $|V|\leq d(G)$. Let $V'$ be the set of vertices connected to vertices of $V$ by an edge $e\in \Gamma\setminus D$. Then $V\cup V'\leq 2d(G)$ as follows from presentation of $\Pi_1(\G,D)=\langle G(v)\mid v\in V\cup V'\rangle$; indeed if not, then we can factor out all these $G(v)$s and get a non-trivial free product $\pi_1(\Gamma)\amalg L$ for some $L$ that contradicts $G=\langle G(v),t_e\mid v\in V, e\in \Gamma\setminus D\rangle$.  By Corollary \ref{free splitting},   the distance between vertices of $V\cup V'$ is at most $2k$. Hence the number of vertices in $D$ is at most $4kd(G)$ and therefore the number of edges of $\Gamma$ is at most $4kd(G)-1+d(G)=d(G)(4k+1)-1 $.
\end{proof}
\begin{cor}\label{cor:coherent} Let $G$  be a free amalgamated pro-$p$ product $G=G_1\amalg_H G_2$  of coherent pro-$p$ groups over an analytic pro-$p$ group $H$. If $H$ is malnormal in $G_1$ then $G$ is coherent.
\end{cor}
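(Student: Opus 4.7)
The plan is to analyse each finitely generated closed subgroup $U$ of $G$ via its action on the standard pro-$p$ tree $T$ of the one-edge splitting $G = G_1 \amalg_H G_2$, and to reduce its coherence to that of the vertex and edge groups appearing in a Bass--Serre-type decomposition. The crucial input is that the malnormality of $H$ in $G_1$ forces the action of $G$ on $T$ to be $k$-acylindrical for some $k \leq 2$: if two distinct edges of $T$ share a vertex of $G_1$-type $gG_1$, their joint stabiliser is of the form $g(H \cap aHa^{-1})g^{-1}$ with $a \in G_1 \setminus H$, which is trivial by malnormality. Since $T$ is bipartite, any geodesic of length $\geq 3$ contains two consecutive edges meeting at a $G_1$-type vertex, hence has trivial stabiliser.

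Given a finitely generated closed subgroup $U \leq G$, I would restrict the action to the minimal $U$-invariant pro-$p$ subtree $T_U$, on which $U$ still acts $k$-acylindrically. I would then invoke the pro-$p$ subgroup structure theorem \cite[Theorem 6.3]{CZ} together with the quantitative acylindrical accessibility of Theorem~\ref{k-acylindrical accessibility} to conclude that $U$ is isomorphic to $\Pi_1(\calu,\Lambda)$ for some finite reduced graph of pro-$p$ groups $(\calu,\Lambda)$ whose vertex groups are finitely generated closed subgroups of conjugates of $G_1$ or $G_2$, and whose edge groups are closed subgroups of conjugates of $H$.

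Since $H$ is analytic and closed subgroups of analytic pro-$p$ groups are analytic (in particular, finitely generated and finitely presented), every edge group of $(\calu,\Lambda)$ is finitely presented. By the coherence of $G_1$ and $G_2$, each (finitely generated) vertex group is finitely presented. The explicit presentation~\eqref{presentation} then exhibits $U = \Pi_1(\calu,\Lambda)$ as the fundamental pro-$p$ group of a finite graph of finitely presented pro-$p$ groups with finitely generated edge groups, which immediately implies that $U$ itself is finitely presented. As $U$ was arbitrary, $G$ is coherent.

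The main obstacle is justifying that the vertex groups of $(\calu,\Lambda)$ are finitely generated: this is precisely where acylindricity (forced by the malnormality of $H$ in $G_1$) and the quantitative bound of Theorem~\ref{k-acylindrical accessibility} are essential, since without acylindricity one could not control a priori either the finiteness of the graph $\Lambda$ or the sizes of the vertex stabilisers of the action of $U$ on $T_U$.
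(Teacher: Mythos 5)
Your proposal is correct and follows essentially the same route as the paper's proof: deduce that a finitely generated subgroup acts at most $2$-acylindrically on the standard pro-$p$ tree from malnormality of $H$ in $G_1$, apply acylindrical accessibility together with the decomposition theorem of \cite{CZ} to get a finite graph of pro-$p$ groups with edge groups in conjugates of the analytic (hence finite-rank, finitely generated) group $H$, use coherence of $G_1,G_2$ for the vertex groups, and conclude finite presentability from the presentation \eqref{presentation}. The only differences are cosmetic (you cite the subgroup structure theorem as \cite[Theorem 6.3]{CZ} where the paper uses \cite[Theorem 3.6]{CZ}, and you pass to the minimal invariant subtree), and your write-up is if anything more explicit than the paper's about why the vertex groups are finitely generated and finitely presented.
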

\begin{proof}  Let $K$ be a finitely generated subgroup of $G$. Then $K$ acts at most  $2$-acylindrically on the standard pro-$p$ tree $T(G)$.  By Theorem \ref{k-acylindrical accessibility}, $K$ is $2$-acylindrically accessible. By \cite[Theorem 3.6]{CZ},  $K=\Pi_1(\H,\Gamma)$ is the fundamental group of a finite graph of finite pro-$p$ groups with edge groups being conjugate to subgroups of $H$. Hence, for each edge $e\in\Gamma$, one has  $d(\H(e))\leq rank(H)$, where $rank(H)$ means the Pr\"ufer rank. Therefore $K$ is finitely presented (cf. (\ref{presentation})).
\end{proof}
\begin{thm}\label{bound}  Let $G=\Pi_1(\G, \Gamma)$ be the fundamental group of a finite reduced $k$-acylindrical graph of pro-$p$ groups with $d(G(e))\leq n$ for each $e\in E(\Gamma)$. Suppose  $G$ is finitely generated.  Then $|E(\Gamma)|\leq  (2kn+1)d(G)$.   
\end{thm}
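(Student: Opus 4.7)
The plan is to apply Proposition \ref{reducing to trees} to a maximal subtree $D\subset\Gamma$ to bound $d(\Pi_1(\G,D))$, and then to re-run the argument of Theorem \ref{k-acylindrical accessibility} for the resulting tree of pro-$p$ groups, exploiting the fact that no auxiliary set $V'$ of vertices is needed when working with a tree. By \cite[Lemma 3.6]{CZ} there are at most $d(G)$ edges in $\Gamma\setminus D$, so Proposition \ref{reducing to trees} combined with the assumption $d(\G(e))\leq n$ yields
\[d(\Pi_1(\G,D)) \le d(G)+\sum_{e\in\Gamma\setminus D}\bigl(d(\G(e))-1\bigr) \le d(G)+(n-1)d(G) = n\,d(G).\]
By Lemma \ref{restricted graph}, $\Pi_1(\G,D)$ embeds in $G$ as the subgroup generated by the vertex groups of $D$, and its action on its own standard pro-$p$ tree inherits $k$-acylindricity from that of $G$, since edge stabilizers can only shrink on passing to a subgroup.

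Next I would re-run, for the reduced tree of pro-$p$ groups $(\G,D)$, the spanning argument used in the proof of Theorem \ref{k-acylindrical accessibility}. Choose $V\subset V(D)$ minimal with $\Pi_1(\G,D)=\langle \G(v)\mid v\in V\rangle$; by Lemma \ref{bounding border}, $|V|\leq d(\Pi_1(\G,D))\leq nd(G)$. The crucial simplification here is that no stable letters $t_e$ appear in $(\G,D)$, so the auxiliary set $V'$ of endpoints of edges in $\Gamma\setminus D$ that doubled $|V\cup V'|\leq 2d(G)$ in the proof of Theorem \ref{k-acylindrical accessibility} is not needed. Corollary \ref{free splitting} then prevents any vertex of $V$ from lying at distance $\geq 2k+1$ from every other vertex of $V$, and reconstructing $D$ as the convex hull of $V$ by iteratively attaching geodesics of length at most $2k$ produces the sharper bound $|V(D)|\leq 2k|V|\leq 2kn\,d(G)$, whence $|E(D)|\leq 2kn\,d(G)-1$.

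Combining this with $|\Gamma\setminus D|\leq d(G)$ yields
\[|E(\Gamma)| = |E(D)| + |\Gamma\setminus D| \le 2kn\,d(G)-1 + d(G) = (2kn+1)d(G)-1 \le (2kn+1)d(G),\]
as required. The main technical obstacle is the inequality $|V(D)|\leq 2k|V|$: as in the proof of Theorem \ref{k-acylindrical accessibility}, one must set up the iterative convex-hull construction carefully, invoking Corollary \ref{free splitting} to guarantee that at each stage some new vertex of $V$ lies within distance $2k$ of the subtree already built, possibly after a further reduction of $(\G,D)$ to keep the graph of groups reduced (a reduction that leaves $\Pi_1(\G,D)$, the bounds $d(\G(e))\leq n$, and the $k$-acylindricity unchanged).
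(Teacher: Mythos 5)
Your opening steps coincide with the paper's proof: take a maximal subtree $D$, bound $|\Gamma\setminus D|\leq d(G)$ by \cite[Lemma~3.6]{CZ}, get $d(\Pi_1(\G,D))\leq nd(G)$ from Proposition \ref{reducing to trees}, and choose a minimal $V$ with $|V|\leq d(\Pi_1(\G,D))\leq nd(G)$ using Lemma \ref{bounding border}. The gap is in your final step. The paper closes the argument by invoking Corollary \ref{acylindrical line}, which yields $|E(D)|\leq 2k|V|\leq 2knd(G)$; you instead try to extract the distance bound from Corollary \ref{free splitting}. But the conclusion of Corollary \ref{free splitting} is only that the group splits as a free pro-$p$ product --- that is not a contradiction in the setting of Theorem \ref{bound}, where $\Pi_1(\G,D)$ may perfectly well be a nontrivial free pro-$p$ product (e.g.\ a reduced tree of groups with trivial edge groups is $k$-acylindrical and its fundamental group is exactly such a free product). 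So Corollary \ref{free splitting} cannot ``prevent'' a vertex of $V$ from lying far from the others. The mechanism that actually produces a contradiction is the one inside the proof of Corollary \ref{acylindrical line}: if two neighbouring vertices $v,w$ of $V$ were at distance $\geq 2k+1$, then after collapsing, Proposition \ref{distance} gives $G(v)\amalg G(w)$ for the segment, and Proposition \ref{at most 2} forces the reduced segment $[v,w]$ to have at most two edges, contradicting reducedness. Your write-up never appeals to this, and without it the distance claim is unsupported.

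Moreover, even if one grants the statement you attribute to Corollary \ref{free splitting} --- that every $v\in V$ lies within $2k$ of \emph{some} other vertex of $V$ (or of the subtree already built) --- this weaker condition does not give $|V(D)|\leq 2k|V|$: it is consistent with $V$ breaking into small clusters that are pairwise far apart, in which case the convex hull of $V$ can be arbitrarily large. What is needed is the bound on the distance between \emph{neighbouring} vertices of $V$, which is exactly what Corollary \ref{acylindrical line} (via Propositions \ref{distance} and \ref{at most 2}) provides; you yourself flag the iterative convex-hull step as an unresolved ``technical obstacle''. Replacing your last paragraph by the citation of Corollary \ref{acylindrical line}, as the paper does, gives $|E(D)|\leq 2knd(G)$ and hence $|E(\Gamma)|\leq (2kn+1)d(G)$.
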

\begin{proof} Let $D$ be a maximal subtree of $\Gamma$. By \cite[Lemma~3.6]{CZ},  there are at most $d(G)$ edges in $\Gamma\setminus D$. By Proposition \ref{reducing to trees}, $d(\Pi_1(\G,D))\leq d(G)+(n-1)d(G)=nd(G)$. By Lemma \ref{bounding border},  $D$ has  at most $nd(G)$ pending vertices in $D$.  Let $V$ be a minimal set  of vertices such that $\Pi_1(\G,D)=\langle G(v)\mid v\in V\rangle$. Then $|V|\leq d(\Pi_1(\G,D))$ and so, by Corollary \ref{acylindrical line},  $|E(D)|\leq 2knd(G)$. So $|E(\Gamma)|\leq (2kn+1)d(G)$.
\end{proof}
\begin{cor}
	Suppose all edge groups are  2-generated and $k=1$ . Then $|E(\Gamma)|\leq 5d(G)$.
\end{cor}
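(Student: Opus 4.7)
The plan is straightforward: this corollary is simply the specialization of Theorem \ref{bound} to the parameter values $n = 2$ and $k = 1$. Since all edge groups $\G(e)$ are assumed to be 2-generated, the hypothesis $d(\G(e)) \leq n$ of Theorem \ref{bound} is satisfied with $n = 2$. The acylindricity parameter is given as $k = 1$. Then the bound from Theorem \ref{bound} reads
\[
|E(\Gamma)| \leq (2kn + 1)\,d(G) = (2 \cdot 1 \cdot 2 + 1)\,d(G) = 5\,d(G),
\]
which is the desired inequality.

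The only thing worth noting is that Theorem \ref{bound} requires $G$ to be finitely generated and $(\G,\Gamma)$ to be finite, reduced, and $k$-acylindrical; these hypotheses are carried over from the standing assumptions on $G = \Pi_1(\G,\Gamma)$ in this section (and, in any case, if $d(G)$ is to appear in the conclusion then finite generation of $G$ is implicit). No additional argument is needed beyond the substitution.

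There is no real obstacle here — the corollary is a numerical consequence of Theorem \ref{bound} and requires nothing more than arithmetic. The substantive work has already been done in establishing Theorem \ref{bound} itself, which in turn relied on Proposition \ref{reducing to trees} to bound $d(\Pi_1(\G,D))$ on a maximal subtree $D \subseteq \Gamma$, on Lemma \ref{bounding border} to control the number of pending vertices, and on Corollary \ref{acylindrical line} to control the length of lines in the acylindrical setting.
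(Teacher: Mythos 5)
Your proof is correct and is exactly the intended argument: the corollary is the specialization of Theorem \ref{bound} to $n=2$, $k=1$, giving $(2\cdot 1\cdot 2+1)d(G)=5d(G)$, which is why the paper states it without further proof. Nothing more is needed.
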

We finish the section  with a pro-$p$ version of the Karras-Solitar \cite[Theorem~6]{KS} but we start with the lemma below where generation symbols $\langle\rangle$  mean abstract generation  unlike in the rest of the paper where $\langle\rangle$ means topological generation.
\begin{lem}\label{malnormal} 
	Let $G=G_1\amalg_HG_2$ be a non-fictitious free  pro-$p$ product with malnormal amalgamation. Suppose $G$ is 2-generated. Then $H$ is trivial and $G_1,G_2$ are cyclic.
\end{lem}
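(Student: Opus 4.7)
The plan is to reduce $G$ to a free pro-$p$ product $\langle a\rangle\amalg\langle b\rangle$ of two cyclic pro-$p$ groups via Corollary \ref{amalgam}, and then to use the rigidity of such a free-product decomposition together with malnormality to force $H=1$ and identify $G_1,G_2$ with the cyclic factors.

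First I examine the Frattini quotient $G/\Phi(G)$, of dimension at most $2$ over $\F_p$. Writing $\bar G_i$ for the image of $G_i$, the Frattini argument rules out $\bar G_i = G/\Phi(G)$ (which would force $G=G_i$, contradicting non-fictitiousness), so $\dim \bar G_i \le 1$; then $\bar G_1+\bar G_2 = G/\Phi(G)$ forces $\dim \bar G_i = 1$ and $\bar G_1 \ne \bar G_2$, whence $\bar H \subseteq \bar G_1\cap\bar G_2 = 0$, that is $H\subseteq\Phi(G)$.  Choose $a\in G_1$ and $b\in G_2$ whose Frattini images form a basis of $G/\Phi(G)$; then $a,b$ topologically generate $G$.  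Since $\bar a,\bar b\ne 0$ we have $a,b\notin H$, and malnormality of $H$ in $G_i$ gives $\langle a\rangle\cap H=1=\langle b\rangle\cap H$: any $a^k\in H\setminus 1$ would lie in $aHa^{-1}\cap H$, forcing $a\in H$, a contradiction.  Corollary \ref{amalgam} yields
\[ G=\langle a,b\rangle=\langle a\rangle\amalg \langle b\rangle, \]
a free pro-$p$ product of two cyclic pro-$p$ groups, both of rank exactly $1$ by additivity of $d(\cdot)$ under free pro-$p$ products.

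It remains to show $G_1=\langle a\rangle$ (symmetrically $G_2=\langle b\rangle$) and $H=1$.  The malnormality of $H$ in both vertex-groups is equivalent to $1$-acylindricality of the action on the standard pro-$p$ tree $T$ of the amalgam, and a short geodesic argument upgrades this to malnormality of $H$ in all of $G$.  The free-product tree $T'$ of $\langle a\rangle\amalg \langle b\rangle$ has trivial edge-stabilizers, so by pro-$p$ Bass-Serre theory applied to the restricted $G_1$-action on $T'$ (\cite[Theorem 6.3]{CZ}), $G_1$ is a free pro-$p$ product whose vertex factors are intersections of $G_1$ with conjugates of $\langle a\rangle$ and $\langle b\rangle$.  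Using $H\subseteq\Phi(G)$ together with $1$-acylindricality on $T$ one checks that the only non-trivial such intersections are the $G_1$-conjugates of $\langle a\rangle$; so $G_1$ is a free pro-$p$ product of $G_1$-conjugates of $\langle a\rangle$ (possibly with a free pro-$p$ factor).  If $H\ne1$, then since $H$ is malnormal in this free pro-$p$ product of cyclic groups, $H$ must be $G$-conjugate to one of the cyclic factors --- a proper subgroup of a cyclic factor would coincide with its conjugates inside the factor and violate malnormality.  By the symmetric analysis in $G_2$, $H$ is also $G$-conjugate to $\langle b\rangle$.  Then $\langle a\rangle$ and $\langle b\rangle$ become $G$-conjugate, contradicting the fact that distinct factors of a free pro-$p$ product lie in distinct $G$-orbits of vertices of $T'$.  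Therefore $H=1$, so $G=G_1\amalg G_2$, and additivity of $d(\cdot)$ forces $G_1=\langle a\rangle$ and $G_2=\langle b\rangle$.  The main obstacle is this classification step, which combines the pro-$p$ subgroup structure theorem with the characteristic property of subgroups of cyclic groups.
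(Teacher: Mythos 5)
Your first half is correct, and it actually takes a cleaner route than the paper: the paper chooses generators $x\in G_1$, $y\in G_2$, invokes the abstract Karrass--Solitar theorem \cite[Theorem 6]{KS} together with Lemma \ref{abstract amalgam} to get $\langle x\rangle\cap H=1=\langle y\rangle\cap H$ at the abstract level and then passes to closures, before applying Corollary \ref{amalgam}; your Frattini argument ($H\subseteq\Phi(G)$, generators $a\in G_1$, $b\in G_2$) plus the commuting-element trick gives $\overline{\langle a\rangle}\cap H=1=\overline{\langle b\rangle}\cap H$ directly for the closed subgroups, so up to $G=\overline{\langle a\rangle}\amalg\overline{\langle b\rangle}$ your argument is fine and avoids \cite{KS} altogether.

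The second half, however, has a genuine gap. First, you apply a Kurosh-type subgroup theorem (\cite[Theorem 6.3]{CZ}) to the closed subgroup $G_1$ of the free pro-$p$ product $\overline{\langle a\rangle}\amalg\overline{\langle b\rangle}$; in the pro-$p$ category no such decomposition exists for arbitrary closed subgroups, and the cited theorem requires $G_1$ to be finitely generated and suitably accessible (finitely many maximal vertex stabilizers up to conjugation), none of which is known at this stage --- indeed, showing $G_1$ is procyclic is essentially the content of the lemma, so the appeal is nearly circular. Second, the assertion that the only nontrivial intersections of $G_1$ with conjugates of the free factors are full $G_1$-conjugates of $\overline{\langle a\rangle}$ is left as ``one checks'' and is not obvious: a priori such intersections are merely procyclic subgroups of conjugates, and nothing you wrote excludes $G_1$ meeting a conjugate of $\overline{\langle b\rangle}$. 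Third, and most seriously, the principle you then use --- that a malnormal subgroup of a free pro-$p$ product of procyclic groups must be conjugate to one of the factors --- is false: in the free pro-$p$ group $F=\overline{\langle a\rangle}\amalg\overline{\langle b\rangle}$ of rank $2$ the subgroup $\overline{\langle ab\rangle}$ is malnormal (it is a free factor for the basis $\{a,ab\}$) yet is not conjugate to $\overline{\langle a\rangle}$ or $\overline{\langle b\rangle}$; your parenthetical justification only rules out $H$ being a \emph{proper} subgroup of a conjugate of a factor, not $H$ failing to be conjugate into a factor at all. The paper finishes differently: it compares the given reduced one-edge splitting over $H$ with the free decomposition via the dichotomy of Proposition \ref{at most 2} for reduced lines whose fundamental group is the free pro-$p$ product of the end vertex groups (this is the step cited there as Proposition \ref{free pro-p product}(ii)), and some structural comparison of the two splittings of this kind, rather than a subgroup theorem applied to $G_1$, is what is needed to conclude $H=1$ and that $G_1,G_2$ are cyclic.
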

\begin{proof}  
	Let $x\in G_1, y\in G_2$ such that  $G$ is generated by $ x$ and $y$. Consider the abstract subgroup   $\langle x,y\rangle$ of the abstract free amalgamated product $G_1*_HG_2$. By  \cite[Theorem 6]{KS},  $\langle x,y\rangle$  is a free product $\langle x\rangle *\langle y\rangle$. By Lemma \ref{abstract amalgam}, $\langle x\rangle\cap H=1=H\cap \langle y\rangle$. Hence  $\overline{\langle x\rangle}\cap H=1=\overline{\langle y\rangle}\cap H$ and, by Corollary \ref{amalgam},  $G=\overline{\langle x\rangle}\amalg \overline{\langle y\rangle}$. Thus the result follows from Proposition \ref{free pro-p product} (ii).
\end{proof}
\begin{thm}\label{thm:KS}
	Let $G=G_1\amalg_HG_2$ be a free  pro-$p$ product with malnormal amalgamation and $K$ is 2-generated subgroup of $G$. If $K$ is not conjugate to a subgroup of $G_1$ or $G_2$, then $K$ is a free pro-$p$ product of two cyclic groups.
\end{thm}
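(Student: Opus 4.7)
The plan is to use accessibility to realise $K$ as the fundamental pro-$p$ group of a finite graph of pro-$p$ groups, then apply Lemma~\ref{malnormal} across a separating edge to reduce to the desired free pro-$p$ product. Since $H$ is malnormal in both $G_1$ and $G_2$, the natural action of $G$ on its standard pro-$p$ tree $T$ is $1$-acylindrical, and the restricted $K$-action inherits $1$-acylindricity. The hypothesis that $K$ is not conjugate into any $G_i$ means $K$ fixes no vertex of $T$. By Theorem~\ref{k-acylindrical accessibility} combined with the pro-$p$ subgroup theorem (\cite[Theorem~3.6]{CZ}), I obtain a presentation $K=\Pi_1(\mathcal{K},\Gamma)$ as the fundamental pro-$p$ group of a finite reduced $1$-acylindrical graph of pro-$p$ groups, with vertex groups contained in $K$-conjugates of $G_1$ or $G_2$ and edge groups contained in $K$-conjugates of $H$.

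The central step is to locate a non-loop edge $e\in E(\Gamma)$ whose removal disconnects $\Gamma$ into two components $\Gamma_1,\Gamma_2$. Once such $e$ is found, Lemma~\ref{restricted graph} provides a splitting $K=K^{(1)}\amalg_{K_e}K^{(2)}$ where $K^{(i)}=\Pi_1(\mathcal{K},\Gamma_i)$. The edge subgroup $K_e$ is malnormal in each $K^{(i)}$: any non-trivial element of $K_e\cap gK_eg^{-1}$ with $g\notin K_e$ would fix a geodesic of length $\geq 2$ in the $K$-standard tree, contradicting $1$-acylindricity. Reducedness of $(\mathcal{K},\Gamma)$ rules out fictitious amalgamations. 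Lemma~\ref{malnormal} then applies to the $2$-generated malnormal amalgam $K=K^{(1)}\amalg_{K_e}K^{(2)}$ and yields $K_e=1$ together with cyclicity of $K^{(1)}$ and $K^{(2)}$, whence $K=K^{(1)}\ast K^{(2)}$ is a free pro-$p$ product of two cyclic groups.

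The main obstacle is ensuring that such a separating non-loop edge exists. By the bound $|\Gamma\setminus D|\leq d(K)=2$ for a spanning tree $D$ (an analogue of \cite[Lemma~3.6]{CZ}), the first Betti number of $\Gamma$ is at most $2$, and Lemma~\ref{bounding border} limits the number of pending vertices to at most two. The residual ``no separating non-loop edge'' scenarios amount to $\Gamma$ being either a single vertex carrying $m\leq 2$ loops, or a short cycle. In the all-loops case $K$ is an iterated HNN extension of $K_v$ with stable letters $t_1,\dots,t_m$; $1$-acylindricity forces each edge group $K_{e_i}$ to be malnormal in $K_v$, and a short Frattini-quotient analysis combined with the $2$-generation constraint forces $K_v$ to be either trivial (so $K$ is free pro-$p$ of rank $\leq 2$) or cyclic with all edge groups trivial (so $K=K_v\ast\Z_p$). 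In the cycle case, an appropriate collapse via Remark~\ref{collapsed graph of groups} combined with Corollary~\ref{free splitting} reduces to one of the handled scenarios.
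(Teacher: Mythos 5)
Your opening moves are fine and parallel the paper: malnormality of $H$ makes the $K$-action on $T(G)$ acylindrical, $K$ fixes no vertex, and in the presence of a separating edge the resulting $2$-generated amalgam with malnormal amalgamation is handled by Lemma \ref{malnormal}. (The paper gets its one-edge splitting directly from \cite[Theorem 4.2]{CZ} rather than from a full accessibility-plus-structure-theorem decomposition, but that difference is harmless.) The genuine gap is in the cases you dismiss with a ``short Frattini-quotient analysis'', i.e.\ precisely the non-amalgam configurations. If $\Gamma$ is a single vertex $v$ with a loop, so $K=HNN(K_v,K_e,t)$, the Frattini argument only controls the image of $K_v$ in $K/\Phi(K)$: it shows that image has rank at most one, but since $K_v\cap\Phi(K)$ can be large this does not make $K_v$ cyclic, nor $K_e$ trivial. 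Concretely, a free pro-$p$ group $F=\langle a,t\rangle$ of rank two is $2$-generated and decomposes as $HNN(\langle a,a^t\rangle,\langle a\rangle,t)$, a reduced splitting with non-cyclic base and non-trivial malnormal associated subgroups; so ``$2$-generated plus one loop'' forces nothing like the dichotomy you assert, and a real argument is needed to recover the conclusion from such data. The same objection applies to the two-loop case, and the cycle case cannot be settled by Corollary \ref{free splitting}, which is stated for trees of groups and needs a generator-carrying vertex at distance at least $2k+1$ from all others.

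This missing step is exactly the hard half of the theorem, and the paper supplies a specific argument for it: when $K$ is not generated by vertex stabilizers, take the splitting $K=HNN(L,K_e,t)$ over an edge stabilizer from \cite[Theorem 4.2]{CZ}, write $K=\langle x,t\rangle$ with $x\in L$, and analyse $R=\langle x,x^t\rangle$; being generated by elements of vertex stabilizers, $R$ splits as an amalgam over an edge stabilizer with $x$ and $x^t$ in different factors, so Lemma \ref{malnormal} gives $R=\langle x\rangle\amalg\langle x^t\rangle$, and from this one deduces $K=\langle x\rangle\amalg\langle t\rangle$. Your proposal contains no substitute for this step, so as written it does not prove the theorem.
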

\begin{proof} Consider the action of $K$ on the standard pro-$p$ tree $T(G)$. Then the action is acylindrical. We assume that $K$ does not stabilize a vertex (if it does it conjugates into $G_1$ or $G_2$). 
	Suppose first that $K$ is generated by vertex stabilizers. By \cite[Theorem 4.2]{CZ} and its proof (see Case 1 there),  there exists   a non-trivial splitting $K=K_1\amalg_{K_e} K_2$ as a free pro-$p$  product with amalgamation over an edge stabilizer. Then the result follows from Corollary \ref{malnormal}. Suppose now $K$ is not generated by vertex stabilizers. By \cite[Theorem 4.2]{CZ} and its proof (see Case 2 there),  there exists  a non-trivial splitting $K=HNN(L,K_e,t)$ as a pro-$p$ HNN-extension over an edge stabilizer. 
	Note that $K=\langle x,t\rangle$ such that $x\in L$  and $t$ is the stable letter. By \cite[Theorem~4.2]{CZ} and its proof (see Case 1 there), $R=\langle x, x^t\rangle=R_1\amalg_{R_e} R_2$ and every vertex-group of $R$ belongs either to $R_1$ or $R_2$ up to conjugation. It follows that $x$ and $x^t$ belong to different factors. Then, by Corollary \ref{malnormal}, $R=\langle x\rangle \amalg \langle x^t\rangle $ is a free pro-$p$ product. It follows that $K=\langle x\rangle \amalg \langle t\rangle$ as needed.
\end{proof}

\section{Decomposing $PD^n$ pro-$p$ groups}
\subsection{Pro-$p$ $PD^n$-pairs}  In \cite{wilkes2} Wilkes defined the profinite version of group pairs but we shall need only a simple version of it. A pro-$p$ group pair $(G,\mathcal S)$ consists of a pro-$p$ group $G$ and a finite family $\mathcal S$ of closed subgroups $S_x$ of $G$ indexed over a set (we allow repetitions in this family). 
Given a closed subgroup $H$ of $G$, let $\mathcal S^H$ denote the family of subgroups
\begin{equation}\label{eq:SH}
	\{H\cap \sigma(y)S_x\sigma(y)^{-1}\mid x\in X, y\in H\backslash G/S_x\},
\end{equation}
indexed over 
$$H\backslash G/\mathcal S:=\bigsqcup_{x\in X} H\backslash G/S_x,$$
where $\sigma\colon G/H\to G$ is a section of the quotient map $G\to G/H$\footnote{A different section only affects the family $\mathcal S^H$
	by changing its members by conjugacy in $H$.}.

In \cite{wilkes2} the author develops the theory of the cohomology of a profinite group relative to a collection of closed subgroups and defines profinite Poincar\'e duality pairs (or $PD^n$-pairs for short) and the reader is referred to \cite[Section~5]{wilkes2} for rigorous definitions and basic results. A pro-$p$ group pair $(G,\mathcal S)$ is a pro-$p$ $PD^n$-pair, for some $n\in\mathbb N$, if the double
of $G$ over the groups in $\mathcal S$ is a pro-$p$ $PD^n$-group. Here the double of $G$ over $\mathcal S$ refers to the fundamental group of a graph of groups with two vertices and $|\mathcal S|$ edges where a copy of $G$ is over each vertex and groups of $\mathcal S$ are over the edges, with natural boudary maps. 
\begin{example}\label{ex:subpairs}
	Let $G$ be a $PD^n$ pro-$p$ group   isomorphic to the fundamental group of a reduced proper finite graph of pro-$p$ groups $(\mathcal G,\Gamma)$ whose edge-groups are $PD^{n-1}$ sungroups of $G$. 
	For each vertex $v\in V(\Gamma)$ denote by $\mathcal E_v$ the collection of all the subgroups of $G(v)$ which are  images $\partial_i(G(e))$ of those edge-groups such that $d_i(e)=v$. Then $(G(v),\mathcal E_v)$ is a pro-$p$ $PD^n$-pair by \cite[Theorem~5.18(2)]{wilkes2} for $\mathcal S=\emptyset$.
\end{example}
We say that a  pro-$p$ $PD^n$-pair $(G,\mathcal S)$ splits as an amalgamated free pro-$p$ product $G=G_1\amalg_H G_2$ (resp. as HNN-extension $HNN(G_1,H,t)$) if each $S_i$ is conjugate to either $G_1$ or $G_2$ (resp. $G_1$).

The next proposition was communicated to us by G. Wilkes.
\begin{prop}\label{prop:non split} (G. Wilkes) Let $(G,\mathcal S)$ be a pro-$p$ $PD^n$-pair with $\mathcal S=\{S_1,\ldots,S_n\}$. Then, for every $i=1,\ldots,n$, $(G,\mathcal S)$ does not split over $S_i$.
\end{prop}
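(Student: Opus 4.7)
The plan is to derive a contradiction from a hypothetical non-trivial splitting of $(G,\mathcal S)$ over $S_i$ by using the splitting to refine the standard graph-of-groups decomposition of the pro-$p$ double $DG$, applying Example~\ref{ex:subpairs} to extract additional pro-$p$ $\PD^n$-pair structures on the pieces, and then comparing the ranks of the resulting cohomological restriction maps.

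Suppose toward contradiction that $(G,\mathcal S)$ splits non-trivially over $S := S_i$; after reindexing write $G = G_1 \amalg_S G_2$ with $G_1, G_2 \ne S$ (the HNN case is analogous). Partition $\mathcal S \setminus \{S\} = \mathcal S_1 \sqcup \mathcal S_2$ according to which $G_k$ each element is conjugate into. The pro-$p$ double $DG$ is by hypothesis a pro-$p$ $\PD^n$-group, and its natural graph of pro-$p$ groups decomposition has two vertex groups (copies of $G$) and $|\mathcal S|$ doubling edges labelled by the elements of $\mathcal S$. First I refine this decomposition by subdividing each copy of $G$ via $G = G_1 \amalg_S G_2$: the result is a reduced proper finite graph of pro-$p$ groups with four vertex groups (two copies of $G_1$, two of $G_2$) and $|\mathcal S|+2$ edges, all of whose edge groups are conjugate to members of $\mathcal S$ and hence pro-$p$ $\PD^{n-1}$-groups. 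Reducedness follows from the non-triviality assumption $G_1,G_2 \ne S$ together with a Strebel-type argument ruling out $S_j = G_k$ for the other edges.

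Next I apply Example~\ref{ex:subpairs} to this refined decomposition of the $\PD^n$-group $DG$. Each vertex group inherits a pro-$p$ $\PD^n$-pair structure whose peripheral family is the collection of its incident edge groups, counted with multiplicity. The refinement admits a choice: inside each copy of $G$ the doubling edge labelled $S$ may be attached on either the $G_1$-side or the $G_2$-side of the amalgamation (both are valid, since $S \le G_1 \cap G_2$ as the amalgamating subgroup). Making both attachments on the $G_1$-side endows $G_1$ with the pair structure $(G_1, \{S, S\} \cup \mathcal S_1)$, the two copies of $S$ coming from the splitting edge in $G^{(1)}$ and from the $S$-doubling edge, both embedded in $G_1$ as the same subgroup $S$; the opposite choice endows $G_1$ with $(G_1, \{S\} \cup \mathcal S_1)$. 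Both must therefore be genuine pro-$p$ $\PD^n$-pairs on the same underlying group $G_1$.

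The contradiction is then a dimension count. For any pro-$p$ $\PD^n$-pair $(H, \mathcal T)$ with $\mathcal T \ne \emptyset$, Wilkes's Poincar\'e--Lefschetz duality $H^n(H, \mathcal T) \cong H_0(H) \cong \mathbb F_p$ combined with the relative long exact sequence in degrees $n-1$ and $n$ (and the vanishing $H^n(H)=0$ which holds since $\mathcal T \ne \emptyset$) forces the image of the restriction map $\phi \colon H^{n-1}(H) \to \bigoplus_{T \in \mathcal T} H^{n-1}(T)$ to have dimension exactly $|\mathcal T|-1$ over $\mathbb F_p$. Applied to the multiplicity-one pair this gives $\dim \mathrm{im}(\phi') = |\mathcal S_1|$; applied to the multiplicity-two pair this gives $\dim \mathrm{im}(\phi) = |\mathcal S_1|+1$. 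But because the two copies of $S$ in the multiplicity-two boundary are the same subgroup of $G_1$, the two restriction maps from $H^{n-1}(G_1)$ to their respective summands coincide, so $\mathrm{im}(\phi)$ lies inside the diagonal subspace $\{(a,a,b) : a \in \mathbb F_p,\, b \in \mathbb F_p^{|\mathcal S_1|}\}$ and equals the diagonal embedding of $\mathrm{im}(\phi')$, forcing $\dim \mathrm{im}(\phi) = \dim \mathrm{im}(\phi') = |\mathcal S_1|$ -- contradicting $|\mathcal S_1|+1$. The main obstacle is verifying that the refined graph satisfies the reducedness hypothesis of Example~\ref{ex:subpairs} and that the two opposite refinement choices genuinely produce two distinct pro-$p$ $\PD^n$-pair structures on $G_1$; the cohomological contradiction itself is then immediate.
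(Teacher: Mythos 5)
Your amalgam-case argument is essentially correct, and it is a genuinely different mechanism from the paper's: the paper applies Wilkes' relative splitting theorems (Theorems 5.16(2)/5.17(2) of \cite{wilkes2}) to endow $G_1$ directly with a $\PD^n$-pair structure whose peripheral family contains the splitting subgroup twice, and kills that configuration with Lemma~\ref{lem:2copies} (itself proved by gluing the two equal boundary components in an HNN-extension and using $\mathrm{cd}_p$), whereas you reach the duplicated-boundary configuration by refining the double and quoting Example~\ref{ex:subpairs}, and you kill it by comparing ranks of boundary restriction maps for the two admissible attachments of the $S$-doubling edge. Your rank computation ($\dim\mathrm{im}\,\phi=|\mathcal T|-1$ for a pair with $\mathcal T\neq\emptyset$, via $H^n(H,\mathcal T;\F_p)\cong\F_p$ and $H^n(H;\F_p)=0$) is correct, and the two embeddings of $S$ into $G_1$ do coincide, so in the amalgam case the two required ranks $|\mathcal S_1|$ and $|\mathcal S_1|+1$ are indeed incompatible. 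The refinement move you use (sliding a doubling edge across the splitting edge) is also legitimate in the pro-$p$ setting; the paper performs the same kind of operation in the proof of Theorem~\ref{thm:JSJsection}.

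The genuine gap is the sentence ``the HNN case is analogous.'' It is not. If $G=HNN(G_1,S,t)$ with associated subgroups $A_0=S$ and $A_1=t^{-1}St$ inside $G_1$, then after refining the double the vertex group $G_1$ receives \emph{both} $A_0$ and $A_1$ from the loop edge no matter what, and the $S$-doubling edge contributes one further copy, either of $A_0$ or of $A_1$. Both attachment choices therefore yield peripheral families of the same cardinality $3+|\mathcal S_1|$, each containing one repeated subgroup, and your rank count imposes in both cases the identical, perfectly consistent condition that the restriction map to the distinct components $(A_0,A_1,\mathcal S_1)$ be surjective; there is no pair of incompatible values as in the amalgam case, so no contradiction is produced. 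To close the HNN case you need a different device, for instance the paper's Lemma~\ref{lem:2copies} (whose proof glues the two equal boundary components by an HNN-extension and argues via $\mathrm{cd}_p$ and Wilkes' Corollary~5.8), at which point your double-refinement detour becomes unnecessary. A secondary, smaller issue: the reducedness hypothesis in Example~\ref{ex:subpairs} is only waved at (``Strebel-type argument''); you should either verify that Wilkes' Theorem~5.18(2) does not actually require reducedness, or genuinely rule out that some $S_j$ is conjugate onto a vertex group $G_k$ of the splitting.
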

The proof relies on the following 
\begin{lem}\label{lem:2copies} Let $G$ be a pro-$p$ group such that $(G,\mathcal S)$ is a $PD^n$-pair. Suppose $S_1= S_2$. Then  $m=2$ and $S_1=S_2=G$.
\end{lem}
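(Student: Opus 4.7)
The plan is to exploit the repetition $S_1=S_2=:H$ to find, inside the double $D$, a closed subgroup of the form $H\times \mathbb Z_p$, and then to use Poincar\'e duality together with the graph-of-groups structure of $D$ to force everything to collapse to the stated form. First, I would realise $D$ as the fundamental pro-$p$ group of the graph of pro-$p$ groups on $\Gamma$ with two vertices (both labelled $G$) and $m$ edges (labelled by the $S_i$), and pick a maximal subtree containing $e_1$. The amalgamation at $e_1$ identifies the two copies of $H$ inside the two copies of $G$, and the edge $e_2$ introduces an HNN stable letter $t\in D$. Because both boundary monomorphisms of $e_2$ coincide with the inclusion $H\hookrightarrow G$, the stable letter satisfies $tht^{-1}=h$ for every $h\in H$, so $L:=\langle H, t\rangle\cong H\times \mathbb Z_p$ is a closed subgroup of $D$.

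Next, I would note that by the standard properties of pro-$p$ $PD^n$-pairs (cf.~\cite[\S 5]{wilkes2}), each $S_i$ is a pro-$p$ $PD^{n-1}$ group; in particular $H$ is $PD^{n-1}$ and so $L \cong H\times \mathbb Z_p$ is a pro-$p$ $PD^n$ group. Since a closed $PD^n$-subgroup of a pro-$p$ $PD^n$ group is automatically of finite index (and hence open), $L$ is open in $D$, so $D$ is virtually $H\times \mathbb Z_p$.

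Now I upgrade this to $H=G$. Using that $t$ is hyperbolic on the standard pro-$p$ tree $T$ of $D$, one has $t\notin G_{v_1}=G$, and checking that elements of $L=H\times \mathbb Z_p$ fixing $v_1$ must project trivially to the $\mathbb Z_p$-factor gives $G\cap L=H$. Hence $[G:H]\leq [D:L]<\infty$, so $H$ is open in $G$. To exclude the possibility that $H$ is a proper open subgroup I would examine the Mayer--Vietoris long exact sequence of $D$ as a graph of groups with trivial $\mathbb F_p$-coefficients: the inequality $\mathrm{cd}(G)<n$ (because $G$ has infinite index in the $PD^n$-group $D$) forces the connecting map $\bigoplus_{i=1}^m H^{n-1}(S_i,\mathbb F_p)\to H^n(D,\mathbb F_p)=\mathbb F_p$ to be surjective, while the repetition $S_1=S_2$ makes the preceding map $H^{n-1}(G)^2\to\bigoplus_i H^{n-1}(S_i,\mathbb F_p)$ factor through the hyperplane $\{x_1=x_2\}$. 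A careful dimension count, combined with the virtual-product structure of $D$ from the previous step, then excludes any proper open inclusion $H\subsetneq G$; this is the principal obstacle of the proof.

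Once $H=G$, both edges $e_1$ and $e_2$ have edge group equal to their vertex groups and so are fictitious in the sense of Definition~\ref{reduced}. The reduction of Remark~\ref{reduced-2} collapses $\Gamma$ to a single vertex with vertex group $G$ carrying $m-1$ loops, each an HNN-extension over $G$ with identity boundary maps. The fundamental pro-$p$ group is therefore $G\times F_{m-1}$, where $F_{m-1}$ denotes the free pro-$p$ group of rank $m-1$. Since $D$ is $PD^n$ and $G$ is $PD^{n-1}$, the factor $F_{m-1}$ must itself be pro-$p$ $PD^1$, i.e.\ $F_{m-1}\cong \mathbb Z_p$. Therefore $m-1=1$, so $m=2$ and $S_1=S_2=G$, as required.
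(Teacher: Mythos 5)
Your construction of $L=\langle S_1,t\rangle\cong S_1\times\mathbb Z_p$ inside the double is fine and runs parallel to the paper, which works instead with $S_1\times\langle t\rangle$ inside the HNN-extension $\tilde G=HNN(G,S_1=S_2,t)$ furnished by \cite[Theorem 5.17(1)]{wilkes2}. But from that point on there are two genuine gaps. The crucial step $S_1=G$ is not actually proved: you reduce it to ``a careful dimension count'' in the Mayer--Vietoris sequence and yourself flag it as the principal obstacle. To make that count work you would at least need to know that the restriction $H^{n-1}(G,\mathbb F_p)\to H^{n-1}(S_1,\mathbb F_p)$ vanishes for a \emph{proper} open inclusion (mod-$p$ orientability together with $\mathrm{cor}\circ\mathrm{res}=[G:S_1]\equiv 0$), and you would have to control the summands coming from $S_3,\dots,S_m$, about which you say nothing; none of this is in the proposal. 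Moreover your intermediate appeal to ``a closed $PD^n$ subgroup of a pro-$p$ $PD^n$ group is open'' is a pro-$p$ Strebel-type statement that you cite no source for; the paper deliberately avoids it, needing only $\mathrm{cd}_p(\tilde G)=n$ (monotonicity of $\mathrm{cd}_p$) and then \cite[Corollary 5.8]{wilkes2}.

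The second gap is the final deduction of $m=2$: knowing only $S_1=S_2=G$, collapsing the two fictitious edges leaves one vertex $G$ with $m-1$ loops whose edge groups are $G$ and $S_3,\dots,S_m$, and the latter need not equal $G$; so the double is not $G\times F_{m-1}$ and the K\"unneth/$PD^1$ argument does not apply. Compare with the paper's order of argument, which avoids both problems: first $m=2$ is obtained because $\tilde G$ is a $PD^n$-pair relative to $\{S_3,\dots,S_m\}$, contains the $PD^n$ group $S_1\times\langle t\rangle$, hence has $\mathrm{cd}_p(\tilde G)=n$, and \cite[Corollary 5.8]{wilkes2} then forces the residual family to be empty; only afterwards is $S_1=G$ deduced, by taking a proper open subgroup $U\supseteq S_1$ and noting that $(U,\mathcal S^U)$ is again a $PD^n$-pair containing two copies of $S_1$ among strictly more than two members, contradicting the first part. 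If you want to salvage your route through the double, you should likewise establish $m=2$ first (your openness claim would have to be justified or replaced by the $\mathrm{cd}_p$ argument) and then carry out the Mayer--Vietoris computation with the vanishing-of-restriction input made explicit.
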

\begin{proof} By \cite[Theorem 5.17(1)]{wilkes2}, the pro-$p$ HNN-extension $\tilde G=HNN(G,S_1=S_2,t)$ with $s^t=s$ for $s \in S_1$ is a $PD^n$-pair relative to the collection $\{S_3,\ldots,S_n\}$. Since $\tilde G$ contains the pro-$p$ $PD^n$-group $S_1\times \langle t\rangle$ (cf. \cite[Proposition 5.9]{wilkes2}), one has $\mathrm{cd}_p(\tilde G)=n$. By \cite[Corollary 5.8]{wilkes2}, $\{S_3,\ldots,S_n\}$ is empty and $m=2$.
	
	If $G\neq S_1$, take an open subgroup $U$ containing $S_1$. If $\mathcal S^U$ is the collection defined in \eqref{eq:SH}, then $(U,\mathcal S^U)$ is a $PD^n$-pair
	(see the proof of \cite[Proposition 5.11]{wilkes2}). But $|\mathcal S^U|=2|U\backslash G/S_1|>2$ and $\{S_1,S_2\}\subset\mathcal S^U$, contradicting the first part.
\end{proof}
\begin{proof}[Proof of Proposition \ref{prop:non split}] Suppose by contradiction that $(G,\mathcal S)$ does split along some $S_i$. Assume w.l.o.g  $i=1$. Up to changing $\mathcal S$ by conjugacy, $G$ is either isomorphic to $HNN(G_1,S_1,t)$, with $S_k\leq G_1$ for every $k=1,\ldots,n$, or isomorphic to $G_1\amalg_H G_2$  with $S_k\leq G_1$ or $S_k\leq G_2$ for every $k=1,\ldots,n$. In the latter case, $\mathcal S$ can be decomposed as $\mathcal S_1\sqcup\mathcal S_2$ where each $\mathcal S_j$  ($j=1,2$) contains only elements from $\mathcal S$ which are also subgroups of $G_j$. Assume w.l.o.g. that $S_1\in \mathcal S_1$. Then by  \cite[Theorem 5.16(2)]{wilkes2} for $G\cong G_1\amalg_H G_2$ and by  \cite[Theorem 5.17(2)]{wilkes2} if $G\cong HNN(G_1,H,t)$  the pair  $(G_1,\mathcal S\sqcup\{H\})$ is a $PD^n$-pair which contradicts Lemma \ref{lem:2copies}.
\end{proof}

\subsection{Splitting over polycyclic subgroups}
Here we collect some results that will be used later in the proof of the main theorem. 

We say that a pro-$p$ group $G$ admits a {\em $k$-acylidrical  splitting} if $G$ is isomorphic to the fundamental pro-$p$ group $\Pi_1(\mathcal G,\Gamma)$ of a  $k$-acylindrical  proper reduced finite graph of pro-$p$ groups. 
\begin{prop}\label{prop:acyl}
	Let $G$ be a pro-$p$ $PD^n$-group which is the fundamental group $\Pi_1(\G,\Gamma)$ of a finite  reduced graph of pro-$p$ groups with   $PD^{n-1}$ edge subgroup of $G$. Then the stabilizers of two adjacent edges of $T$ are not commensurable.
\end{prop}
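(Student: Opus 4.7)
I plan to argue by contradiction using Lemma~\ref{lem:2copies}. Suppose $e_1,e_2\in E(T)$ share a common vertex $v$ and have commensurable stabilizers $S_1:=G_{e_1}$ and $S_2:=G_{e_2}$. Each $S_i$ is a $G(v)$-conjugate of a member of the boundary family $\mathcal E_v$, so after replacing $e_1,e_2$ by $G(v)$-translates within their $G(v)$-orbits at $v$ I may arrange that $S_1,S_2$ occupy two distinct positions in the indexing $\bigsqcup_{A\in\mathcal E_v}G(v)/A$, namely two distinct members of $\mathcal E_v$ when $e_1,e_2$ project to distinct edges of $\Gamma$, or two distinct cosets of a common $A\in\mathcal E_v$ otherwise. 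Put $S:=S_1\cap S_2$; as a finite-index subgroup of each pro-$p$ group $S_i$ it is open in both and is therefore itself a $PD^{n-1}$ pro-$p$ group.

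By Example~\ref{ex:subpairs}, $(G(v),\mathcal E_v)$ is a pro-$p$ $PD^n$-pair. The key step is to choose an open normal subgroup $U\lhd G(v)$ satisfying $U\cap S_1=U\cap S_2=S$ and, in the same-orbit sub-case, also $g\notin UA$ where $g\in G(v)$ witnesses $S_2=gS_1g^{-1}$. Such a $U$ exists by the standard separation property of the pro-$p$ topology: $S$ is open in each $S_i$, so only the finitely many non-identity cosets of $S$ in $S_1\cup S_2$ (and, if necessary, the single coset $gA$) have to be excluded from $U$. By the subgroup theorem for pro-$p$ $PD^n$-pairs (\cite[Proposition~5.11]{wilkes2}, as invoked in the proof of Lemma~\ref{lem:2copies}), the pair $(U,\mathcal E_v^U)$ is again a $PD^n$-pair.

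By construction the family $\mathcal E_v^U$ contains the two entries $U\cap S_1=S$ and $U\cap S_2=S$, arising from distinct positions in $\bigsqcup_{A\in\mathcal E_v}U\backslash G(v)/A$. Applying Lemma~\ref{lem:2copies} forces $|\mathcal E_v^U|=2$ and $U=S$. Unwinding this degeneracy: in the distinct-members sub-case it says $\mathcal E_v=\{S_1,S_2\}$ and each double coset $U\backslash G(v)/S_i$ is a singleton, so $US_i=G(v)$; combined with $U=S\subseteq S_i$ this yields $G(v)=US_i=SS_i=S_i$, i.e.\ the boundary map $\partial_j\colon\mathcal G(\bar e_i)\to\mathcal G(\bar v)$ is an isomorphism, contradicting the reducedness of $(\mathcal G,\Gamma)$. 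The same-orbit sub-case is handled analogously: it forces $\bar v$ to be a pending vertex with unique non-loop incident edge whose image in $G(v)$ has prescribed index $[G(v):S]$, and the degeneracy $U=S$ then makes the inclusion $\partial_j$ an isomorphism, again contradicting reducedness.

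The main obstacle is organising the case analysis and verifying that the chosen $U$ satisfies all the required separation conditions simultaneously; once this is arranged, the remainder is a clean application of Lemma~\ref{lem:2copies} followed by translating the degeneracy $|\mathcal E_v^U|=2,\,U=S$ back through the definition of $\mathcal E_v^U$ to contradict the reducedness of the graph of groups.
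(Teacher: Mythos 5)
Your strategy is essentially the paper's: turn ``commensurable'' into ``equal'' by passing to an open subgroup and then feed two equal boundary members into Lemma~\ref{lem:2copies} via Example~\ref{ex:subpairs}. The paper does the restriction more cheaply, replacing the ambient $PD^n$ group $G$ by an open subgroup $U\leq G$ with $U\cap G_{e_1}=G_{e_1}\cap G_{e_2}=U\cap G_{e_2}$, so that the two adjacent edge stabilizers become literally equal, and then quotes Example~\ref{ex:subpairs} and Lemma~\ref{lem:2copies} at the common vertex; you instead restrict inside the vertex group $G(v)$ and use the relative restriction $(U,\mathcal E_v^U)$ as in the proof of Lemma~\ref{lem:2copies}. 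That route can be made to work, but two of your steps are genuinely flawed as written. First, an open \emph{normal} subgroup $U$ of $G(v)$ with $U\cap S_1=U\cap S_2=S$ need not exist: normality forces $U$ to contain the $G(v)$-normal closure of $S$, which can meet $S_1$ outside $S$, and your ``exclude finitely many cosets'' justification ignores the simultaneous constraint $S\subseteq U$. The repair is easy: either drop normality (in the proof of Lemma~\ref{lem:2copies} the subgroup $U$ is merely open; $U=SN$ with $N$ a small open normal subgroup of $G(v)$ works), or keep $U$ a small open normal subgroup and only require $U\cap S_1=U\cap S_2$ (which follows from $U\cap S_i\subseteq S$), since equality of the two members is all Lemma~\ref{lem:2copies} needs.

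Second, your same-orbit endgame does not follow. In that case Lemma~\ref{lem:2copies} gives $|\mathcal E_v^U|=2$ with both members equal to $U$, hence $\mathcal E_v=\{A\}$, $U\subseteq A\cap gAg^{-1}$ open in $G(v)$, and $|U\backslash G(v)/A|=2$; this exhibits $A$ as a \emph{proper open} subgroup of $G(v)$, not as all of $G(v)$, so the claim that ``the inclusion $\partial_j$ is an isomorphism'' is not what the degeneracy yields, and no contradiction with reducedness is reached. Excluding a $PD^n$-pair $(G(v),\{A\})$ with $A$ proper of finite index requires an additional argument that Lemma~\ref{lem:2copies} alone does not provide. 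Two smaller remarks: translating $e_1$ and $e_2$ by \emph{different} elements of $G(v)$ can destroy commensurability — no translation is needed in the distinct-member case (just choose the section representatives to be the conjugators), and in the same-orbit case one common translation suffices; and in the distinct-member case your reducedness contradiction tacitly assumes the relevant edge of $\Gamma$ is not a loop, a caveat the paper's own terse proof shares, but which your more detailed case analysis should at least acknowledge.
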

\begin{proof} 
	 We just need to show that  two adjacent edge groups $G(e_1)$, $G(e_2)$  do not intersect by a subgroup of finite index. Suppose they do. Then there exists an open subgroup $U$ of $G$ such that $U\cap G(e_1)=G(e_1)\cap G(e_2)=U\cap G(e_2)$. So replacing $G$ by $U$ we may assume that $G(e_1)=G(e_2)$. Let $v$ be their common vertex. By  Example \ref{ex:subpairs}, $(G(v), \mathcal E_v)$ is a pro-$p$ $PD^n$ pair with $G(e_1), G(e_2)\in \mathcal E_v$ contradicting  Lemma \ref{lem:2copies}.
\end{proof}
The next theorem establishes a pro-$p$ version JSJ-decomposition for $PD^n$ pro-$p$ groups analogous of one from \cite[Theorem A2]{K}.
\begin{thm}\label{thm:JSJsection}
	For every $PD^n$ pro-$p$ group $G$, ($n>2$) there exists a (possibly trivial)  $k$-acylindrical pro-$p$ $G$-tree $\mathcal T$ satisfying the following properties:
	\begin{enumerate}
		\item[(i)] every edge stabilizer is a   maximal polycyclic subgroup  of $G$ of Hirsch length $n-1$;
		\item[(ii)] polycyclic subgroup  of $G$ of Hirsch length $>1$ stabilizes a vertex;
		\item[(iii)] the underline graph of groups does not split further k-acylindrically over a polycyclic subgroup  of $G$ of Hirsch length $n-1$.
	\end{enumerate}
	Moreover, every two pro-$p$
	$G$-trees satisfying the properties above are $G$-isomorphic.
\end{thm}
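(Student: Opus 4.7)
The plan is to establish existence via accessibility and uniqueness via a fixed-point analysis matching edge stabilizers.

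For existence, I would consider the family $\mathcal F$ of $G$-isomorphism classes of $k$-acylindrical reduced pro-$p$ $G$-trees whose edge stabilizers are maximal polycyclic subgroups of $G$ of Hirsch length $n-1$, including the trivial one-vertex tree. By Theorem \ref{k-acylindrical accessibility}, every element of $\mathcal F$ has at most $d(G)(4k+1)-1$ edges, so one can choose $\mathcal T \in \mathcal F$ with the maximum number of edges. Property (ii) is Lemma \ref{lem:malnormal}, property (iii) is precisely the maximality of $\mathcal T$, and property (i) is built into the definition of $\mathcal F$; the legitimacy of replacing any edge stabilizer $G(e)$ by a maximal polycyclic overgroup $P$ of Hirsch length $n-1$ relies on Lemma \ref{lem:malnormal} forcing $P$ to be elliptic, together with Proposition \ref{prop:acyl} and $k$-acylindricity ruling out the possibility that $P$ fixes only one endpoint of $e$.

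For uniqueness, take $\mathcal T_1, \mathcal T_2$ both satisfying (i)--(iii), and pick an edge $e_1 \in E(\mathcal T_1)$. Its stabilizer $G(e_1)$ is polycyclic of Hirsch length $n-1>1$, so by Lemma \ref{lem:malnormal} its fixed-point set $F_{e_1} \subseteq \mathcal T_2$ is a nonempty subtree of diameter $\leq k$. If $F_{e_1}$ contains an edge $e_2 \in E(\mathcal T_2)$, the inclusion $G(e_1) \leq G(e_2)$ between two maximal polycyclic subgroups of Hirsch length $n-1$ forces $G(e_1)=G(e_2)$. This yields a $G$-equivariant bijection of edge stabilizers which, together with the symmetric matching from $\mathcal T_2$ to $\mathcal T_1$ and the reconstruction of vertex groups from their adjacent edge groups, assembles into a $G$-isomorphism $\mathcal T_1 \cong \mathcal T_2$.

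The hard part is the alternative case in which $F_{e_1}=\{v_2\}$ is a single vertex of $\mathcal T_2$, so that $G(e_1)<G(v_2)$ strictly. Here one aims to contradict property (iii) for $\mathcal T_2$ by exhibiting a $k$-acylindrical refinement at $v_2$. The needed splitting of $G(v_2)$ over $G(e_1)$ cannot be produced from a Kropholler--Roller-type theorem, which fails in the pro-$p$ setting as shown in the introduction; it must instead be read off the action of $G(v_2)$ on $\mathcal T_1$. One first argues that $G(v_2)$ has no global fixed vertex in $\mathcal T_1$ --- using $k$-acylindricity, Proposition \ref{prop:acyl}, and the maximality in (i) to rule out the inclusion $G(v_2) \leq G_{\mathcal T_1}(v_1)$ for any $v_1 \in V(\mathcal T_1)$ --- so that the restricted Bass--Serre-type action produces a splitting of $G(v_2)$ with $G(e_1)$ among its edge stabilizers. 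Verifying that grafting this splitting into $\mathcal T_2$ preserves $k$-acylindricity and yields an element of $\mathcal F$ with strictly more edges is the main technical obstacle I expect.
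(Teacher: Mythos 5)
Your existence argument is essentially the paper's: an accessibility bound (the paper uses Theorem \ref{bound}, you use Theorem \ref{k-acylindrical accessibility}) lets one pick a $k$-acylindrical decomposition over polycyclic groups of Hirsch length $n-1$ with the maximal number of edges, Lemma \ref{lem:malnormal} gives (ii), and maximality is responsible for (iii). One caveat: ``(iii) is precisely the maximality'' skips the grafting step the paper spells out --- if a vertex group $G(v)$ split $k$-acylindrically over a polycyclic subgroup $A$ of Hirsch length $>1$, then by Lemma \ref{lem:malnormal} the groups of the edges adjacent to $v$ are elliptic in that splitting, so $v$ can be replaced by an edge and the decomposition enlarged, contradicting maximality. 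This intermediate statement --- vertex groups of $\mathcal T$ admit \emph{no} $k$-acylindrical splitting over polycyclic subgroups of Hirsch length $>1$ at all --- is not a technicality: it is the engine of the paper's uniqueness proof, and it is what your uniqueness argument is missing.

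The uniqueness half of your proposal has a genuine gap, and the route you sketch for the hard case points the wrong way. You propose to show that $G(v_2)$ has no global fixed vertex in $\mathcal T_1$ and to read off its action on $\mathcal T_1$ a splitting over $G(e_1)$ contradicting (iii) for $\mathcal T_2$. But the opposite is what holds: since vertex groups of a tree satisfying (i)--(iii) admit no $k$-acylindrical splitting over polycyclic subgroups of Hirsch length $>1$ (the statement above), a fixed-point-free action of $G(v_2)$ on $\mathcal T_1$ would induce exactly such a splitting (over subgroups of the edge stabilizers of $\mathcal T_1$, with acylindricity inherited), so $G(v_2)$ is elliptic in $\mathcal T_1$ and no splitting can be extracted from that action. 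The paper's uniqueness argument runs through this ellipticity rather than through matching edge stabilizers: every vertex group of $\mathcal T_1$ fixes a vertex of $\mathcal T_2$, which defines a $G$-equivariant morphism $\mathcal T_1\to\mathcal T_2$; adjacency is preserved because $G_{d_0(e)}\cap G_{d_1(e)}$ contains the polycyclic group $G_e$ of Hirsch length $n-1$, while by Proposition \ref{prop:acyl} stabilizers of distinct adjacent edges are not commensurable. Note also that even in your ``good'' case a $G$-equivariant bijection between edge stabilizers does not by itself assemble into a $G$-isomorphism of trees: to define the map on vertices you again need ellipticity of vertex groups, i.e.\ the very ingredient your proposal never establishes.
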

\begin{proof}
	 By Theorem   \ref{bound}, a k-acylindrical  decomposition as  fundamental group of a reduced finite graph of pro-$p$ groups $(\mathcal G,\Lambda)$ with  polycyclic subgroup  of $G$ of Hirsch length $n-1$ has a bound, so we can choose one with a maximal number of edge groups. In particular, the edge-groups satisfy property (i).
	
	 By Lemma~\ref{lem:malnormal}, the standard pro-$p$ tree $\mathcal T$ also satisfies propery (ii). We shall show now property (iii). 
	
	First notice that the vertex-stabilizers of $\mathcal T$ cannot decompose k-acylindrically over  polycyclic subgroups  of Hirsch length $>1$ at all. Indeed suppose on the contrary that some vertex-group $G(v)$ of $(\mathcal G,\Lambda)$ splits $k$-acylindrically either as $G_0\amalg_A G_1$ or $HNN(G_0,A,t)$, where $A$ is polycyclic of Hirsch length $>1$. Then, by Lemma~\ref{lem:malnormal}, the edge-groups of all adjacent edges to $G(v)$ are conjugate into either $G_0$ or $G_1$. Denote by $E_i$ the set of edges in $\mathrm{star}_{\Lambda}(v)$ whose edge group is conjugate into $G_i$ with $i=0,1$. Thus we can replace the vertex $v$ by an edge $e$ with two vertices $v_1$ and $v_2$, connecting the edges $e_i\in E_i$ to $v_i$, together with boundary maps  $\partial_i\colon G(e_i)\to G(v_i)$ given by correspondent conjugation for every $e_i\in E_i$. Note that the construction of this map is continuous because $\mathrm{star}_{\Lambda}(v)$ is finite. This contradicts the maximality of the decomposition. 
	
	Given any two trees $\mathcal T$ and $\bar{\mathcal T}$ satisfying the  properties (i)-(iii), we claim that there exists a $G$-equivariant morphism $\phi\colon \mathcal T\to\bar{\mathcal T}$. Let us prove the claim. In order to construct $\phi$ we need to map $G$-equivariantly each edge $e$ of $\mathcal T$ to an edge $\bar e$ of $\bar{\mathcal T}$. Let $v=d_0(e)$ and $w=d_1(e)$. 
	Therefore, there exist vertices $\bar v$ and $\bar w$ such that $G_v\subseteq G_{\bar v}$ and $G_w\subseteq G_{\bar w}$. Hence it suffices to prove that $\bar v$ and $\bar w$ are at distance 1 in the tree $\bar{\mathcal T}$ and set $\phi(e)=\bar e$, where $\bar e$ denotes the edge connecting $\bar v$ to $\bar w$. By Proposition \ref{prop:acyl} , edge-groups of distinct edges in $\Lambda$ are not commensurable. Therefore,  one sees that $G(v_1)\cap G(v_2)$ is polycyclic subgroup  of $G$ of Hirsch length $n-1$ that  implies adjacency.
\end{proof}
The uniqueness of $\mathcal T$ in Theorem \ref{thm:JSJ} induces an action on it by the automorphism group $Aut(G)$. This gives a splitting sturcture on $Aut(G)$ if $\mathcal T$ is non-trivial. We state this as a 
\begin{cor}
The automorphism group $Aut(G)$ acts on $\mathcal T$. Moreover, if $\mathcal T$ is not a vertex then $Aut(G)$ splits as non-trivial amalgamated free pro-$p$ product or pro-$p$ $HNN$-extension.
\end{cor}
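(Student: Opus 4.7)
\bigskip
\noindent\textbf{Proof proposal.} The plan has two parts: first promote the $G$-action on $\mathcal T$ to an action of $Aut(G)$ by exploiting the uniqueness clause of Theorem \ref{thm:JSJsection}, and second invoke pro-$p$ Bass-Serre theory for the enlarged action to extract the splitting of $Aut(G)$.

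For the first part, given $\phi\in Aut(G)$ I would consider the \emph{twisted} pro-$p$ $G$-tree $\mathcal T^\phi$ obtained from $\mathcal T$ by keeping the underlying profinite graph and redefining the $G$-action as $g\cdot_\phi x:=\phi^{-1}(g)\cdot x$. Under this twisted action the stabilizer of $x\in\mathcal T$ becomes $\phi(G_x)$; since $\phi$ is a continuous group automorphism of $G$, it preserves the classes of polycyclic subgroups of Hirsch length $>1$ and of maximal polycyclic subgroups of Hirsch length $n-1$, as well as the $k$-acylindricity of the action. Thus $\mathcal T^\phi$ still satisfies properties (i)--(iii) of Theorem \ref{thm:JSJsection}, so by the uniqueness clause there is a $G$-isomorphism $\sigma_\phi\colon\mathcal T\to\mathcal T^\phi$, i.e.\ a continuous graph automorphism $\sigma_\phi$ of $\mathcal T$ satisfying $\sigma_\phi(g\cdot x)=\phi(g)\cdot\sigma_\phi(x)$ for all $g\in G$ and $x\in\mathcal T$. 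Inner automorphisms $\phi=\mathrm{inn}_h$ recover the original action ($\sigma_\phi$ then acts as $h$), so $\phi\mapsto\sigma_\phi$ extends the $Inn(G)$-action on $\mathcal T$ to all of $Aut(G)$.

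For the second part, assume $\mathcal T$ is not a single vertex. Then the reduced graph of pro-$p$ groups $(\mathcal G,\Lambda)$ underlying $\mathcal T$ has at least one edge, so $G$ acts on $\mathcal T$ with non-trivial quotient $\Lambda=G\backslash\mathcal T$; in particular, $G$ fixes no vertex or edge of $\mathcal T$. Since $Inn(G)\subseteq Aut(G)$ acts on $\mathcal T$ through the original $G$-action, the whole $Aut(G)$-action also has no global fixed vertex or edge. Applying pro-$p$ Bass-Serre theory (\cite[Chapter~3]{R 2017}, \cite[Section~3]{Z-89}) to the $Aut(G)$-action on $\mathcal T$ realizes $Aut(G)$ as the pro-$p$ fundamental group of a graph of pro-$p$ groups over $Aut(G)\backslash\mathcal T$, with stabilizers as vertex and edge groups. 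Since this quotient graph has at least one edge (its image in $G\backslash\mathcal T=\Lambda$ does), the resulting decomposition of $Aut(G)$ is a non-trivial amalgamated free pro-$p$ product or pro-$p$ HNN-extension.

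The main obstacle is that the uniqueness in Theorem \ref{thm:JSJsection} is only up to $G$-isomorphism, so the assignment $\phi\mapsto\sigma_\phi$ is a priori only well-defined modulo $G$-equivariant automorphisms of $\mathcal T$; producing a genuine homomorphism $Aut(G)\to Aut(\mathcal T)$ requires a coherent choice, most naturally by passing through the induced $Out(G)$-action on the quotient $\Lambda=G\backslash\mathcal T$ and lifting along the extension $1\to Inn(G)\to Aut(G)\to Out(G)\to 1$. A secondary technical point is verifying continuity of the resulting action and, if necessary, passing to the barycentric subdivision of $\mathcal T$ to exclude edge inversions before invoking the pro-$p$ Bass-Serre machinery.
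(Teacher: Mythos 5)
Your approach is the same as the paper's: the authors derive the corollary directly from the uniqueness clause of Theorem \ref{thm:JSJsection} (uniqueness induces the $Aut(G)$-action, and non-triviality of $\mathcal T$ plus the structure theory of groups acting on pro-$p$ trees with finite quotient gives the splitting), and they record no further detail. Your second part is fine: $Aut(G)\backslash\mathcal T$ is a quotient of the finite graph $\Lambda=G\backslash\mathcal T$, so the quotient is finite, and since the decomposition $(\mathcal G,\Lambda)$ is reduced and has an edge, $G$ (hence $Inn(G)$, hence $Aut(G)$) fixes no vertex, so the induced finite graph-of-groups decomposition of $Aut(G)$ reduces to a non-trivial amalgam or HNN-extension.

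The obstacle you flag in the last paragraph is genuine and is not addressed by the paper either, but your proposed remedy (passing to an $Out(G)$-action on $\Lambda$ and lifting along $1\to Inn(G)\to Aut(G)\to Out(G)\to 1$) is the harder route; the natural way to close the gap is to show that the $G$-isomorphism $\sigma_\phi\colon\mathcal T\to\mathcal T^\phi$ furnished by uniqueness is in fact unique, which makes $\phi\mapsto\sigma_\phi$ automatically multiplicative. Concretely, it suffices to show that the only $G$-equivariant automorphism of $\mathcal T$ (for the original action) is the identity: equivariance forces $G_v=G_{\alpha(v)}$ for every vertex $v$, so if $\alpha(v)\neq v$ then $G_v$ fixes the geodesic $[v,\alpha(v)]$ pointwise and hence coincides with an incident edge stabilizer; using that $(\mathcal G,\Lambda)$ is reduced together with properties (i)--(ii) (a vertex group equal to a polycyclic edge group of Hirsch length $n-1$ would force $G$ itself to be polycyclic and yet act without a fixed vertex, contradicting (ii)), one rules this out, and $k$-acylindricity then pins $\alpha$ down on all of $\mathcal T$. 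With uniqueness in hand, continuity of the resulting $Aut(G)$-action is the remaining routine verification you correctly mention; no barycentric subdivision is needed since the $d_0,d_1$-structure already precludes inversions.
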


\section{Example}

\begin{thm} Let $G$ be an abstract $PD^n$ group and $H$ its $PD^{n-1}$ subgroup.  
	
	\begin{enumerate}
		\item[(i)] (\cite[Theorem B ]{KR}) Suppose that $\mathrm{cd}(H\cap H^g)\neq  n-2$ for each $g\in G$. Then $G$ splits as an amalgamated free product or HNN-extension over a group commensurable with $H$. 
		\item[(ii)] (\cite[Theorem C ]{KR}) Suppose $H$ is polycylic. Then some  finite index subgroup of $G$ splits as an amalgamated free product or HNN-extension over a group commensurable with $H$.
	\end{enumerate}

\end{thm}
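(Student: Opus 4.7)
The plan is to follow the Kropholler--Roller framework, which converts the splitting problem into an end-of-pair computation and then into a structure theorem via an algebraic Stallings-type theorem. First I would set up relative ends: for a pair $(G,H)$ with $H$ of type $FP$, let $\tilde e(G,H)$ denote the $\mathbb F_2$-dimension of the space of $H$-almost-invariant subsets of $G$ modulo the ``trivial'' ones (finite unions of $H$-cosets and their complements). The classical algebraic end-theorem for pairs (Dunwoody--Scott--Swarup) asserts that whenever $\tilde e(G,H)\ge 2$, the group $G$ splits as an amalgam or HNN-extension over a subgroup commensurable with $H$. So (i) reduces to establishing $\tilde e(G,H)\ge 2$.

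Next I would compute $\tilde e(G,H)$ using Poincar\'e duality. By the standard identification, $\tilde e(G,H)$ is controlled by $H^1(G,\mathbb F_2[G/H])$, and Poincar\'e duality for the $PD^n$ group $G$ converts this into homology
\[
H^1(G,\mathbb F_2[G/H])\;\cong\; H_{n-1}(G,\mathbb F_2[G/H]),
\]
(ignoring the dualizing module over $\mathbb F_2$ after passing to an orientable subgroup of finite index). Shapiro's lemma then yields $H_{n-1}(H,\mathbb F_2)\cong\mathbb F_2$ (using that $H$ is $PD^{n-1}$), which gives \emph{one} nontrivial class. To produce a second, I would run the double-coset Mackey decomposition
\[
\mathbb F_2[G/H]\otimes_{\mathbb F_2}\mathbb F_2[G/H]\;\cong\;\bigoplus_{HgH\,\in\,H\backslash G/H}\mathbb F_2\bigl[G/(H\cap H^g)\bigr]
\]
and identify the cup-product / intersection form $H^1\otimes H^{n-1}\to H^n(G)=\mathbb F_2$; this shows that the only obstruction to $\tilde e(G,H)\ge 2$ comes from double cosets $HgH$ for which $H\cap H^g$ has cohomological dimension exactly $n-2$. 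The hypothesis $\mathrm{cd}(H\cap H^g)\ne n-2$ therefore kills all such contributions, and the surviving classes deliver the required second end. Applying the end theorem then yields (i).

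For part (ii), I would reduce to (i) by producing a finite-index subgroup $G_0\le G$ in which the hypothesis of (i) is satisfied. Since $H$ is polycyclic, it is virtually torsion-free, and every subgroup $H\cap H^g$ is again polycyclic; its cohomological dimension coincides with its Hirsch length. A polycyclic $H$ has only finitely many commensurability classes of intersections $H\cap H^g$ of Hirsch length $n-2$ (controlled by the normaliser/commensurator of the relevant subgroups), so one may choose a torsion-free $G_0$ of finite index in $G$ in which, for the corresponding subgroup $H_0\le G_0$, every $H_0\cap H_0^g$ is either commensurable with $H_0$ itself (Hirsch length $n-1$) or of strictly smaller Hirsch length, never $n-2$. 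Applying (i) to $(G_0,H_0)$ then produces the required splitting of a finite-index subgroup over a commensurable conjugate of $H$.

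The main obstacle I anticipate is the cohomological bookkeeping in the middle step: correctly tracking which double cosets contribute to $H_{n-1}(G,\mathbb F_2[G/H])$, and verifying that the pairing with the top class does vanish precisely on the classes arising from intersections of ``wrong'' Hirsch length. This is where the condition $\mathrm{cd}(H\cap H^g)\ne n-2$ does the real work, and it is the technical heart of Kropholler--Roller's argument; everything before it is setup and everything after is a direct appeal to the algebraic end theorem.
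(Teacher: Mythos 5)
There is a genuine gap, and it sits exactly where you locate ``the technical heart.'' The statement you invoke as a ``classical algebraic end-theorem for pairs'' --- that $\tilde e(G,H)\ge 2$ alone forces $G$ to split over a subgroup commensurable with $H$ --- is not a theorem; it is essentially the Kropholler conjecture and is false in the generality you use it. A concrete counterexample to your claimed step: take $G$ a closed surface group (a $PD^2$ group) and $H$ the cyclic subgroup carried by a closed geodesic with self-intersections; your duality/Shapiro computation gives $\tilde e(G,H)=2$, yet $G$ splits over no subgroup commensurable with $H$. The known results in this direction (Dunwoody--Swenson's algebraic torus theorem, Scott--Swarup) all need extra hypotheses (e.g.\ $H$ polycyclic, or nesting/non-crossing of the almost invariant sets). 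Relatedly, the role you assign to the hypothesis $\mathrm{cd}(H\cap H^g)\neq n-2$ is misplaced: for a $PD^{n-1}$ subgroup of a $PD^n$ group one has $\tilde e(G,H)=2$ unconditionally (this is Kropholler--Roller's relative-ends computation, exactly the duality argument you sketch, with no intersection condition needed). In \cite{KR} the condition $\mathrm{cd}(H\cap H^g)\neq n-2$ enters \emph{after} the end computation, to control how translates of the $H$-almost invariant set cross one another so that a splitting criterion applies; your outline replaces precisely this step by an appeal to a nonexistent general theorem.

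Part (ii) also does not reduce to (i) the way you propose. Passing to a finite-index subgroup $G_0\le G$ does not alter the cohomological dimension (Hirsch length) of the intersections: for $g\in G_0$ the group $H_0\cap H_0^{\,g}$ is commensurable with $H\cap H^g$, so any ``bad'' intersection of dimension $n-2$ survives in every finite-index subgroup, and no choice of $G_0$ can make the hypothesis of (i) hold; the finiteness-of-commensurability-classes argument you gesture at does not exist in this form. Kropholler--Roller's Theorem C is proved by a separate, polycyclic-specific argument, not by shrinking $G$. Note finally that the paper itself gives no proof of this statement --- it is quoted from \cite{KR} --- so the only question is whether your sketch reconstructs the Kropholler--Roller argument, and in its present form it does not.
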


The next example shows that both Kropholler-Roller theorems do not hold in the pro-$p$ case, i.e.  neither of the statements of the theorems above.

\begin{example}  Let $G$ be an open subgroup of $SL_2(\Z_p)$ (say the first congruence pro-$p$ subgroup)  and $H=B\cap G$ is the intersection of the Borel subgroup $B$ of $SL_2(\Z_p)$ with $G$. Then $H$ is a maximal metacyclic subgroup of $G$ and therefore is $PD^2$ pro-$p$ group. Moreover, $H$ is malnormal. Indeed, the group of upper unipotent matrices  is a normal subgroup of $H$ which is  isomorphic to $\Z_p$ on which a subgroup of diagonal matrices 
$	\begin{pmatrix} t&0\\
	0&t^{-1}\end{pmatrix}$
		acts as multiplication by $t^2$; recalling that the group of units of $\Z_p$ is isomorphic to $\Z_p\times C_{p-1}$ (if $p>2$) and to $\Z_2\times C_2$ (if $p=2$), we see that $H$ is metacyclic, say $H=U\rtimes T$, where $U$ consists of  unipotent elements and $T$ consists of diagonal elements. 
	
	To see that $H$ is malnormal in $G$ consider $A=H\cap H^g$ for some $g\in G\setminus B$. First observe that a straightforward calculation shows that for the unipotent upper triangle group $U$ one has $U\cap U^g=1$   for $g\not\in B$. Now  if $B\cap B^g$ intersects  $U$ non-trivially, then this intersection is cyclic, since otherwise it is open in $B$ contradicting the preceding sentence.  Moreover, it is normal in both $B$ and $B^g$ and so in $\langle B,g\rangle$ by  \cite[Lemma 15.2.1 (a)]{R 2017}. But $U^g$ is a unique maximal normal subgroup in $B^g$ so $B\cap B^g\cap U\leq U^g\cap U=1$.  It follows that  $B\cap B^g$ is  generated by a semisimple element $s$ and,  as it is not scalar  (the scalars have order 2), its eigenvalues  are disjoint   $t,t^{-1}$.  
	This matrix $s$  has two 1-dimensional eigen submodules of $\Z_p\oplus \Z_p$: $V_t$  associated with eigenvalue $t$ and $V_{t^{-1}}$   associated with eigenvalue $t^{-1}$. Hence $V_{t}\cap V_{t^{-1}}=0$.  Note that if $v\in V_t$ then $v/p\in \Z_p\oplus \Z_p$ implies $v/p\in V_t$. This means that $V_t/p\neq V_{t^{-1}}/p$ in $\F_p\oplus \F_p$. But $g$ is trivial modulo $p$ and  so $gV_{t}/p= V_{t^{-1}}/p$, a contradiction.

	 The group $G$ is an analytitic pro-$p$ group of dimension 3 and so is a $PD^3$ pro-$p$ group. It has no non-abelian pro-$p$ subgroups and it is not soluble. So by \cite[Theorem 4.7 and 4.8]{horizons} it does not split as an amalgamated free pro-$p$ product or HNN-extension.
	
\end{example}

\end{document}